\newcommand{\Dowl}{E_+^{uG}}
\newcommand{\wh}{\widehat}
\newcommand{\hro}{\bar{\rho}}
\newcommand{\OO}{\mathscr{O}}
\newcommand{\etb}{\bar{\eta}}
\newcommand{\Sp}{\mathbf{Sp}}
\newcommand{\coi}{\la-1\ra}
\newcommand{\A}{\mathbb {A}}\newcommand{\VV}{\mathbb {V}}
\newcommand{\la}{\langle}
\newcommand{\ra}{\rangle}
\newtheorem{theo}{\bf Theorem}
\newtheorem{defi}{\bf Definition}
\newtheorem{cor}{\bf Corollary}
\newtheorem{prop}{\bf Proposition}
\newtheorem{rem}{\bf Remark}
\newtheorem{ex}{\bf Example}
\newcommand{\Mob}{\mathrm{M\ddot{o}b}}
\newcommand{\Gr}{\mathscr{G}}
\newcommand{\BB}{\mathbb{B}}
\newcommand{\KK}{\mathbb{K}}
\newcommand{\LL}{\mathbb{L}}
\newcommand{\Sy}{\mathbb{S}}
\newcommand{\vect}{\mathrm{Vec}_{\mathbb{K}}}
\newcommand{\spe}{\mathbf{Sp}}
\newcommand{\be}{\begin{equation}}
\newcommand{\eeq}{\end{equation}}
\newcommand{\tiltau}{\widetilde{\tau}}
\begin{document}

\title{On the combinatorics of Riordan arrays and Sheffer polynomials: monoids, operads and monops.}

\author{Miguel A. M\'endez and Rafael S\'anchez}
\date{}
\maketitle
\begin{abstract}We introduce a new algebraic construction, {\em monop}, that combines monoids (with respect to the product of species), and operads (monoids with respect to the substitution of species) in the same algebraic structure. By the use of properties of cancellative set-monops we construct a family of partially ordered sets whose prototypical  examples are the Dowling lattices. They generalize the partition posets associated to a cancellative operad, and the subset posets associated to a cancellative monoid.  Their generalized Withney numbers of the first and second kind are the entries of a Riordan matrix and its inverse. Equivalently, they are the connecting coefficients of two umbral inverse Sheffer sequences with the family of powers $\{x^n\}_{n=0}^{\infty}$. We study algebraic monops, their associated algebras and the free monop-algebras, as part of a program in progress to develop a theory of Koszul duality for monops.
\end{abstract}
{\em Dedicated to the memory of Gian Carlo Rota, 1932-1999.}\\
\section{Introduction}

The systematic study of the Sheffer families of polynomials and of its particular instances: the Appel familes and the families of binomial type, was carried out by G.- C. Rota and his collaborators in what is called the Umbral Calculus (see \cite{MullinRota},
\cite{Rotako, Roman-Rota, Roman}). A Sheffer sequence is uniquely associated to a pair of exponential formal power series, $(F(x),G(x)),$ $F(x)$ invertible with respect to the product of series, and $G(x)$ with respect to the substitution. The Sheffer sequences come in pairs, one is called the umbral inverse of the other. If one Sheffer sequence is associated to the pair $(F(x),G(x))$, its umbral inverse  is associated to the pair $(\frac{1}{F(H(x))},H(x))$, where $H(x)=F^{\langle -1\rangle}(x)$, the substitutional inverse of $F(x)$.   Shapiro et al. introduced in \cite{Shapiro} the  Riordan group of matrices, whose entries in the exponential case, connect two Sheffer families of polynomials. Since that a great number of enumerative applications have been found by these methods. See for example the list of Riordan arrays of $\mathrm{OEIS}$.
 
  The initial motivation of the present research was to find a combinatorial explanation of the inversion process in the group of Riordan matrices. Equivalently, to the Sheffer sequences of polynomials and their umbral inverses. The key tool for such explanation is in the first place the  concept of M\"obius function and M\"obius inversion over partially ordered sets (posets) \cite{Rotamob}. 
  
   For the particular case of families of binomial type, the combinatorics of the process of inversion is related to families of posets of enriched partitions (assemblies of structures). One of the families of binomial type obtained by summation over the poset, and its umbral inverse by M\"obius inversion. Particular cases of them were studied in \cite{Reiner}, \cite{JoniRS} and \cite{Sagan}. The general explanation was found in \cite{Mend-Yang}, where the construction of those posets is based on some special kind of set operads, called $c$-operads.  
 
 A similar approach can be applied to the Appel families. The central combinatorial object in this case is that of a $c$-monoid. A $c$-monoid is a special kind of monoid in the monoidal category of species with respect to the product (See \cite{Joy1}, \cite{Mendlib}. See also  \cite{Marcelol}  for an extensive treatment of monoids and Hopf monoids). Given a $c$-monoid, through its product we are able to build a family of partially ordered sets. For each of these monoids, one  Appel family is obtained by summation over those posets, and its umbral inverse by M\"obius inversion. 
 
 In this article we introduce a new algebraic structure, that we called monop, because it is an interesting mix between monoids and operads. Our first step was to construct a monoidal category, the semidirect product (in the sense of Fuller \cite{Fuller}) of the monoidal categories of species with respect to the product and the positive species with respect to the substitution. Then, we define a monop to be a monoid in such category. From the commutative diagrams satisfied for this kind of monoids we deduce all the main properties of monops. We also introduce the $c$-monops. From a $c$-monop we give a general construction of posets that give combinatorial explanation of the inverses of Riordan matrices by means of M\"obius inversion. Or, equivalently, to Sheffer families and their umbral inverses. We present a number of examples of Appel, binomial and general Sheffer families together with the posets constructed using the present theory. Remarkably, we obtain a new operad, that we call the Dowling operad, which we complemented here to a monop in order to give a construction to the classical Dowling lattice and introduce  $r$-generalizations for $r$ a positive integer. With similar techniques we can define monops on rigid species (species over totally ordered sets), with the operations of ordinal product and substitution. In this way giving combinatorial interpretations to the inversion in the Riordan group associated to pairs of ordinary series $(f(x),g(x))$. In a forthcoming paper we shall deal with the applications of the present theory to the ordinary Riordan matrices.  \\ 
      
    Monops have an independent algebraic interest beyond the enumerative applications given here.  B. Vallet \cite{Valletteposet} proved that, under reasonable conditions, posets associate to a $c$-operad are Cohn-Macaulay \cite{Bjorner, Wachs} if and only if the $c$-operad is Koszul \cite{G-K}. In the same vein of Vallet approach, one of us has proved  \cite{Mend-Kosz} that a $c$-monoid is Koszul if and only if the family of associated posets is Cohn-Macaulay. Our next step in this program shall be the development of a Koszul duality theory for monops. Monoids are closely related to associative algebras. Given a monoid $M$, the analytic functor associated to it (\cite{Joy2}) evaluated in  a vector space is an associative algebra. Then, Koszul duality for monops would establish a deep link between Koszul duality for operads and for associative algebras. And also, interesting connection with the Cohn-Macaulay property for the associated posets and Koszulness of the corresponding monop, unifying in this way the criteria established in \cite{Valletteposet} and in \cite{Mend-Kosz}.

\section{Formal power series}
The exponential generating series (or function) of a sequence of  numbers $f_n$, $n=0,1,\dots$ is the formal power series
\begin{equation*}
F(x)=\sum_{n=0}^{\infty}f_n \frac{x^n}{n!}
\end{equation*}
\noindent  The coefficient $f_n$ will be denoted as $F[n]$, $F[n]:=f_n$. The series $F(x)$ will be called a {\em delta series} if $F[0]=0$ and $F[1]\neq 0$.
For an exponential series $F(x)$ with zero constant term, $F[0]=0$, we denote by $\gamma_k(F)(x)$ its divided power
\begin{equation*}
\gamma_k(F)(x):=\frac{F^k(x)}{k!}.
\end{equation*} 
\noindent The substitution of such a formal power series $F(x)$ in another arbitrary formal power series $G(x)$ is equal to
\begin{equation}
G(F(x)):=\sum_{k=0}^{\infty}G[k]\times\gamma_k(F)(x).
\end{equation}

\begin{defi}\normalfont
	A pair of exponential formal power series $(F(x),G(x))$ is called {\em admissible} if $G[0]=0$. An admissible pair is called a {\em Riordan pair} if $F[0]\neq 0$ and $G(x)$ is a delta series.
\end{defi}
Riordan product of admissible pairs is defined as follows 
\begin{equation}
(F_1(x), G_1(x))\ast(F_2(x),G_2(x)):=(F_1(x).F_2(G_1(x)),G_2(G_1(x)).
\end{equation}
\noindent Admissible pairs of series in $\mathbb{C}[[x]]$ form a monoid with respect to the product $\ast$, having $(1,x)$ as identity.
 The Riordan pairs  form a group, the inverse of $(F(x),G(x))$ given by
\begin{equation}
(F(x),G(x))^{-1}=(F^{-1}(G^{\la -1\ra}(x)),G^{\la -1\ra}(x)).
\end{equation}
\noindent Where $F^{-1}(x)$ and $G^{\la -1\ra}(x)$ denote the multiplicative and substitutional inverses of $F(x)$ and $G(x)$ respectively.
\begin{defi}\normalfont
To an  admissible pair $(F(x),G(x))$ we associate the infinite lower triangular matrix having as entries
	\begin{equation}
	C_{n,k}=H_{k}[n],\; 0\leq k\leq n.
	\end{equation}
	$H_k(x)$ being the series $F(x).\gamma_k(G(x))$. That matrix is denoted as $\langle F(x),G(x)\rangle$. The Riordan product is transported to matrix product by the bracket operator,
 We have that (see \cite{Shapiro}). \begin{equation}\label{fundamental}
\langle G_1(x),F_1(x)\rangle\langle G_2(x),F_2(x)\rangle=\langle G_1(x).G_2(F_2(x)),F_2(F_1(x)).\rangle
\end{equation}
The matrix $\langle F(x),G(x)\rangle$ is called a {\em Riordan array} when $(F(x),G(x))$ is a Riordan pair. Riordan arrays  with  the operation of matrix product form a group that is isomorphic to the group of Riordan pairs. The inverse of the matrix $\langle F(x),G(x)\rangle$ is equal to \begin{equation}\label{inversematrix}\langle F(x),G(x)\rangle^{-1}=\langle F^{-1}(G^{\coi}(x)),G^{\coi}(x)\rangle.\end{equation} 
\end{defi}
\noindent 
The ordinary generating function of the sequence $f_n$ is equal to the formal power series
\begin{equation*}
f(x)=\sum_{n=0}^{\infty}f_n x^n.
\end{equation*}
\noindent We denote by $f[n]$ the nth. coefficient of $f(x)$. 

\begin{defi}\normalfont For an admissible pair $(g(x),f(x))$ of ordinary generating functions we define the associated matrix having as entries the coefficients 
\begin{equation}
C_{n,k}=h_k[n],\;0\leq k\leq n
\end{equation}	
\noindent where $h_k(x)$ is the series $h_k(x)=g(x).f^k(x)$.\end{defi}
	
	\section{Sheffer sequences of polynomials} 
	\begin{defi}\normalfont
		Let $G(x)$ be a delta series. Define the polynomial sequence
		\begin{equation}
		p_{n}(x):=\sum_{k=1}^{n}\gamma_k(G)[n]x^k, \;n\geq 1
		\end{equation}
		\noindent and let $p_0(x)\equiv 1$. This polynomial sequence is known to be of binomial type, 
		$$p_n(x+y)=\sum_{k=0}^n\binom{n}{k}p_{n-k}(x)p_{k}(y).$$
		
		\noindent It is called the {\em conjugate} sequence to the delta series $G(x)$. It is also called the {\em associated} sequence to the series $P(x)=G^{\la -1\ra}(x)$. We have that 
		\begin{equation}
		P(D)p_n(x)=np_{n-1}(x),
		\end{equation}
	\end{defi}
	\noindent where $P(D)$ is the operator defined by 
	$$P(D)=\sum_{n=1}^{\infty}P[n]\frac{D^n}{n!},$$ $D$ being the derivative operator $Dr(x)=r'(x)$.
	
	\begin{defi}\normalfont We say
		that a family of polynomials $s_n(x)$ is {\em Sheffer} if there exists
		 Riordan pair of formal power series  $(F(x), G(x))$ such that
	\begin{equation}\label{riordansheffer}
	s_n(x):=\sum_{k=0}^n (F.\gamma_k(G)[n])x^k,\; n\geq 0.
	\end{equation}

	We will say that $\{s_n(x)\}_{n=0}^{\infty}$ is the {\em conjugate} sequence of $(F(x),G(x))$. \end{defi}
	Observe that the coefficients $c_{n,k}=F.\gamma_k(G)[n]$ connecting the family of powers $x^n$ with $s_n(x)$, $n\geq 0$, are the entries of the Riordan matrix associated to the pair $(F(x),G(x)),$ $\langle F(x),G(x)\rangle$.
	Let us consider the Riordan inverse of  $(F(x),G(x))$,
	 \begin{equation}(S(x),P(x))=(F(x),G(x))^{-1}=(F^{-1}(G^{\coi}(x)), G^{\coi}(x))).\end{equation}\noindent Let $\{p_n(x)\}_{n=0}^{\infty}$ be the family of binomial type associated to the delta operator $P(D)$. It is not difficult to verify that
	\begin{equation}\label{eq:shefferbinomial}
	s_n(x)=S^{-1}(D)p_n(x)=F(G^{\langle -1\rangle}(D))p_n(x)=F(P(D))p_n(x)
	\end{equation}  
	As a consequence of Eq. (\ref{eq:shefferbinomial}), we get that the Sheffer sequence  $\{s_n(x)\}_{n=0}^{\infty}$  satisfies the binomial identity
	$$s_n(x+y)=\sum_{k=0}^n\binom{n}{k} s_k(x)p_{n-k}(y).$$
	We say that it is  Sheffer relative to the binomial family $p_n(x)$. It is called the Sheffer sequence {\em associated} to the Riordan pair $(S(x),P(x))$.
	
	A Sheffer sequence associated to a Riordan pair of the form $(S(x),x)$ is called an Appel sequence. An Appel sequence is  Sheffer relative to the family of powers, $\{x^n\}_{n=0}^{\infty}.$ Observe that, by Eq.(\ref{riordansheffer}), a such Appel sequence $a_n(x)$ conjugate to the pair $(F(x),x)$, $F(x)=S^{-1}(x)$ is of the form,
	\begin{equation}
	a_n(x)=\sum_{k=0}^nF(x).\frac{x^k}{k!}[n]x^k=\sum_{k=0}^n\binom{n}{k}F[n-k]x^k,
	\end{equation}
	since $(F(x)\frac{x^k}{k!})[n]=\binom{n}{k}F[n-k]$. 
	Similarly, a family of binomial type is  Sheffer associated to Riordan pairs of the form $(1,P(x))$ (resp. conjugate to pairs of the form $(1,F(x))$, $F(x)=P^{\langle -1\rangle}(x)$.
	\subsection{Umbral substitution}
	
	Let $$r_n(x)=\sum_{k=0}^n d_{n,k}x^k$$ be another Sheffer sequence conjugated to a Riordan pair $(H(x),K(x))$. Consider the umbral substitution defined by
	\begin{equation*}
	s_n(\mathbf{r})=\sum_{k=0}^n c_{n,k}\mathbf{r}^k:=\sum_{k=0}^n c_{n,k}r_k(x)=\sum_{j=0}^n(\sum_{j\leq k\leq n} c_{n,k}d_{k,j})x^j.
	\end{equation*}
	Since the matrix of coefficients of the umbral substitution  is the product of the corresponding matrices, by Eq. (\ref{fundamental}), we have that
	\begin{prop}\label{prop.umbral}\normalfont
		The umbral substitution $s_n(\mathbf{r})$ of two Sheffer sequences as above is also Sheffer, conjugated to the Riordan product 
		$$(F(x),G(x))\ast (H(x),K(x))=(F(x)H(G(x)),K(G(x))).$$
	\end{prop}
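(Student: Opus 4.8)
The plan is to reduce the statement entirely to the matrix identity (\ref{fundamental}), since the umbral substitution has already been written out in coordinates just before the proposition. First I would record that, by definition, the conjugate sequences $s_n(x)=\sum_k c_{n,k}x^k$ and $r_n(x)=\sum_k d_{n,k}x^k$ have coefficient matrices $(c_{n,k})=\langle F(x),G(x)\rangle$ and $(d_{n,k})=\langle H(x),K(x)\rangle$, where $c_{n,k}=F\cdot\gamma_k(G)[n]$ and $d_{n,k}=H\cdot\gamma_k(K)[n]$. Both matrices are lower triangular ($c_{n,k}=d_{n,k}=0$ for $k>n$), which is what makes every index range occurring below finite and the matrix product well defined.

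Next I would read off from the displayed expansion of $s_n(\mathbf r)$ that its coefficient of $x^j$ is $\sum_{j\le k\le n}c_{n,k}d_{k,j}$, i.e. exactly the $(n,j)$ entry of the matrix product $\langle F,G\rangle\langle H,K\rangle$ taken in this order. Hence $s_n(\mathbf r)$ is the polynomial sequence whose coefficient matrix is that product. To finish, it then suffices to show that the product is itself a Riordan (bracket) matrix $\langle A(x),B(x)\rangle$: for then $s_n(\mathbf r)$ is Sheffer, conjugate to the pair $(A(x),B(x))$, and all that remains is to identify $(A,B)$ with the Riordan product of the two given pairs.

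The identification is supplied directly by (\ref{fundamental}): applying it to $\langle F,G\rangle\langle H,K\rangle$ collapses the product into a single bracket whose two entries are obtained by multiplying the leading (multiplicative) factors and composing the substitutional factors in the prescribed order. I would then match the resulting pair against the definition $(F(x),G(x))\ast(H(x),K(x))=(F(x)H(G(x)),K(G(x)))$, so that the output bracket is precisely $\langle F\cdot H(G),K(G)\rangle$. This matching is the only place requiring genuine care, and is the main obstacle: one must keep straight the two distinct roles (multiplicative versus substitutional) of each component, and verify that the inner substitution inside $H$ and the outer composition $K(G)$ land on the correct series, since the bookkeeping conventions for the bracket operator and for the product $\ast$ interleave the two components differently. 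Once the two formulas are seen to coincide, the proposition follows at once, and the admissibility together with the triangularity of the product guarantees that we have obtained a bona fide Sheffer coefficient matrix.
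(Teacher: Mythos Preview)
Your proposal is correct and follows essentially the same route as the paper: the paper's entire argument is the one-line observation immediately preceding the proposition, namely that the coefficient matrix of the umbral substitution is the product of the two Riordan matrices, and then Eq.~(\ref{fundamental}) identifies that product as the bracket of the Riordan pair $(F(x)H(G(x)),K(G(x)))$. Your write-up is simply a more detailed unpacking of those two sentences; the ``main obstacle'' you flag about matching bracket conventions is real bookkeeping but is not treated as a separate difficulty in the paper.
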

\begin{cor}\normalfont 
	Let $a_n(x)$ and $p_n(x)$ be the Appel and binomial sequences conjugate respectively to $(F(x),x)$ and $(1,G(x))$. Then we have
	\begin{equation}
	s_n(x)=a_n(\mathbf{p}).
	\end{equation}
\end{cor}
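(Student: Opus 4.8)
The plan is to read the statement as a direct specialization of Proposition \ref{prop.umbral}. Both input sequences are already Sheffer in the sense of the preceding definitions: $a_n(x)$, being Appel conjugate to $(F(x),x)$, is conjugate to a Riordan pair, and $p_n(x)$, being of binomial type conjugate to $(1,G(x))$, is conjugate to the Riordan pair $(1,G(x))$. Hence the umbral substitution $a_n(\mathbf{p})$ falls exactly under the hypotheses of Proposition \ref{prop.umbral}, with the roles played by $(F(x),x)$ and $(1,G(x))$ respectively.

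First I would invoke Proposition \ref{prop.umbral} to conclude that $a_n(\mathbf{p})$ is again a Sheffer sequence, conjugate to the Riordan product $(F(x),x)\ast(1,G(x))$. The only computation needed is to evaluate this product using the defining formula $(F_1(x),G_1(x))\ast(F_2(x),G_2(x))=(F_1(x)\cdot F_2(G_1(x)),G_2(G_1(x)))$. Here the second multiplicative factor is the constant series $1$, so its substitution $1(G_1(x))=1$ regardless of $G_1$, while the first pair has substitutional component $G_1(x)=x$, the identity for substitution, so that $G_2(G_1(x))=G(x)$. Therefore the product collapses to $(F(x)\cdot 1,G(x))=(F(x),G(x))$.

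It then remains to match the resulting conjugate sequence with $s_n(x)$. By the definition of the conjugate sequence, Eq.~\eqref{riordansheffer}, the Sheffer sequence conjugate to a given Riordan pair is uniquely determined: its coefficient of $x^k$ in degree $n$ is $(F\cdot\gamma_k(G))[n]$. Since $s_n(x)$ is by definition conjugate to $(F(x),G(x))$ and $a_n(\mathbf{p})$ is, by the previous step, conjugate to the same pair $(F(x),G(x))$, the two sequences coincide coefficientwise, which is the asserted identity.

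I do not expect any genuine obstacle here; the content is entirely carried by Proposition \ref{prop.umbral}, and the corollary is essentially the observation that the factorization $(F(x),G(x))=(F(x),x)\ast(1,G(x))$ of a Riordan pair into an \emph{Appel part} and a \emph{binomial part} is transported, through the bracket and umbral dictionary, into the factorization $s_n(x)=a_n(\mathbf{p})$ of a Sheffer sequence. The single point demanding care is the bookkeeping in the Riordan product: one must apply the substitution in the correct order and remember that the constant series $1$ absorbs the substitution, so that no spurious factor survives in the first component.
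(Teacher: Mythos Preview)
Your proposal is correct and follows exactly the same approach as the paper's own proof, which reads in full: ``Immediate from Prop.~\ref{prop.umbral} and the identity $(F(x),G(x))=(F(x),x)\ast(1,G(x))$.'' Your argument is simply a more detailed unpacking of that one-line proof.
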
 
\begin{proof}Immediate from Prop. \ref{prop.umbral} and the identity $$(F(x),G(x))=(F(x),x)\ast(1,G(x)).$$
	\end{proof}
	The Sheffer sequence associated to $(F(x),G(x))$ is the {\em umbral inverse} of $\{s_n(x)\}_{n=0}^{\infty}$, denoted $\{\widehat{s}_n(x)\}_{n=0}^{\infty}$. For every $n\geq 0,$
	\begin{equation}\label{eq:umbral}s_n(\widehat{\mathbf{s}}(x)):=\sum_{k=1}^n F(x).\gamma_k(G(x))[n]\widehat{s}_k(x)=x^n=\widehat{s}_n(\mathbf{s}(x))\end{equation}
	
	\noindent This is obviously equivalent to the identity (\ref{inversematrix}). It says that the matrix $F(x).\gamma_k (G(x))[n]$ is the inverse of $S(x).\gamma_k (P(x))[n]$.  It is summarized in the following table. 
	
	\begin{center}
		\begin{tabular}
			{|c| c| c| c|c|}\hline
			 &Sheffer& Appel & Binomial& Umbral Inverse\\ 
			\hline
			Associated to&(S(x),P(x))& (S(x),x) & (1,P(x))&Conjugate to\\ 
			\hline
			Conjugate to & $(F(x),G(x))$ &$(F(x),x)$ &$(1,G(x))$&Associated to\\\hline
			Matrix&$F(x)\frac{G^k(x)}{k!}[n]$&$\binom{n}{k}F[n-k]$&$\frac{G^k(x)}{k!}[n]$&Inverse Matrix\\ \hline
		\end{tabular}
	\end{center}
  
\section{Species and rigid species}
In a general way, a (symmetric) species is a covariant functor from the category of finite sets and bijections $\mathbb{B}$ to a suitable category. For example, if we set as codomain the category of finite sets and functions $\mathbb{F}$, we get set species (see \cite{BBL, Joy1}). If we instead set as codomain the category of vector spaces and linear maps $\vect$; we get linear species (see for example \cite{Joy2, Marcelol, Mendlib}). By changing the domain $\mathbb{B}$ by the category of totally ordered sets $\mathcal{L}$ and poset isomorphisms, we obtain rigid species (species of structures without the action of the symmetric groups, non-symmetric species). Rigid species are endowed with two kinds of operations; shuffle and ordinal. 
\subsection{Three monoidal categories with species.} The (symmetric) set species, together with the natural transformation between them form a category. A species $P$ is said to be {\em positive} if it assigns no structures to the empty set, $P[\emptyset]=\emptyset$. The category of species will be denoted by $\spe$ and the category of positive species by $\spe_+$.

Recall that the product of species is defined as follows
\begin{equation*}
 (M.N)[V]=\sum_{V_1+V_2=V}M[V_1]\times N[V_2].
 \end{equation*}
And the substitution of a positive species $P$ into an arbitrary species $R$ by
 \begin{equation*}
 R(P)[V]=\sum_{\pi\in\Pi[V]}\prod_{B\in \pi}P[B]\times R[\pi].
 \end{equation*}
 \noindent
 \noindent  The symbol of sum in set theoretical context will always denote disjoint union. The elements of the product $M.N[V]$ are pairs $(m,n)$, $m$ an element of $M[V_1]$ and $n$ an element of $N[V_2]$, for some decomposition of $V$, $V=V_1+V_2.$
 The category $\spe$ is monoidal with respect to the operation of product. It has as identity the species $1$ of empty sets,
 \begin{equation}
 1[V]:=\begin{cases}
 \{\emptyset\}&\mbox{if $V=\emptyset$}\\
\emptyset&\mbox{otherwise},
 \end{cases}
 \end{equation}
\noindent we have canonical isomorphisms $$1.M\cong M\cong M.1.$$

The category of positive species is monoidal with respect to the operation of substitution. Its identity being the species of singletons,
\begin{equation}
X[V]:=\begin{cases}
V&\mbox{if $|V|=1$}\\
\emptyset&\mbox{otherwise}.\end{cases}
\end{equation}
The divided power $\gamma_k(G(x))=\frac{G^k(x)}{k!}$ of an exponential formal power series $G(x)$ has a counterpart in species. Recall that for a positive species $P$,
\begin{equation*}
\gamma_k(P)[V]=\sum_{|\pi|=k}\prod_{B\in \pi}P[B].
\end{equation*} 
The elements of $\gamma_k(P)$ are assemblies of $P$-structures having exactly $k$ elements, $$\mbox{$a=\{p_B\}_{B\in\pi}$, $|a|=k$, and  $p_B\in P[B]$ for every $B\in \pi$.}$$
The elements of the substitution $R(P)$ are pairs of the form: $(a,r)$, $a=\{p_B\}_{B\in\pi}$ an assembly of $P$-structures, and $r$ an element of $R[\pi]$. The divided power can be seen as the substitution of $P$ into the species $E_k$, of sets of cardinal $k$, $$E_k[V]=\begin{cases}
\{V\}&\mbox{if $|V|=k$}\\ \emptyset&\mbox{ otherwise,}\end{cases}$$
 $$\gamma_k(P)=E_k(P).$$

\begin{defi}\normalfont
Let us consider now the product category $\spe\times \spe_+$. A pair of species $(M,\OO)$ in $\spe\times\spe_+$ will be called {\emph admissible.}  Morphisms are pairs of natural transformations of the form $$(\phi,\psi):(M_1,\OO_1)\rightarrow (M_2,\OO_2),$$ $\phi:M_1\rightarrow M_2$, and $\psi:\OO_1\rightarrow \OO_2$. It is a monoidal category with respect to the Riordan product, defined as follows:
\begin{equation}
(M_1,\OO_1)\ast (M_2,\OO_2)=(M_1.M_2(\OO_1),\OO_2(\OO_1))
\end{equation}
\noindent having as identity the pair $(1,X)$,
\begin{equation}
(1,X)\ast (M,\OO)=(1.M(X),\OO(X))\cong(M,\OO)\cong (M.1,X(\OO))=(M,\OO)\ast (1,X).
\end{equation}
\noindent It will be called from now on the {\em Riodan category.} 
\end{defi}

The monoidal categories $\spe$ and $\spe_+$ are respectively imbedded into the Riordan category by mapping,
\begin{eqnarray}
M&\mapsto&(M,X)\\
\OO&\mapsto&(1,\OO).
\end{eqnarray}
\begin{rem}\normalfont The Riordan category is just the semidirectproduct $\spe\rtimes\spe_+$ (in the sense of \cite{Fuller}) associated to the action
\begin{eqnarray*}
\spe_+&\rightarrow&[\spe,\spe]\\
\OO&\mapsto& (M\mapsto M(\OO)).
\end{eqnarray*} 
\end{rem}
\noindent The exponential generating functions of $(M,\OO)$ is defined to be 
	\begin{equation}
	(M,\OO)(x)=(M(x),\OO(x)).
	\end{equation}
	The generating function of the Riordan product $(M_1,\OO_1)\ast(M_2,\OO_2)$ is obviously the Riordan product of the respective generating functions
	\begin{equation*}
	(M_1,\OO_1)\ast(M_2,\OO_2)(x)=(M_1(x).M_2(\OO_1(x)),\OO_2(\OO_1(x))).
	\end{equation*}
\noindent The matrix associated to an admissible pair $(M(x),\OO(x))$ 
  is invertible if and only if $(M(x),\OO(x))$ is a Riordan pair. Since 
  \begin{equation}
  C_{n,k}=|M.\gamma_k(\OO)[n]|
  \end{equation}
  \noindent it enumerates pairs of the form $(m,a)$, $m\in M[V_1]$ and $a$ an assembly of $\OO$-structures over $V_2$ having exactly $k$ elements, $V_1+V_2=[n]$. 
\begin{ex}\normalfont Let us consider $E$, the species of sets, $E[V]=\{V\}$. Let $E_+$ be its associated positive species. 
	The  pair $(E,E_+)$ has as generating function the Riordan pair
	\begin{equation}
	(E,E_+)(x)=(e^x,e^x-1).
	\end{equation}	
	\noindent  The matrix associated to the pair $(e^x,e^x-1)$, $C_{n,k}=|E.\gamma_k(E_+)[n]|$, 
counts the number of partial partitions of $[n]$ having $k$ blocks. The matrix $C_{n,k}=|\Pi.\gamma_k(E_+)[n]|$ associated to the pair $(\Pi,E_+)$
	\begin{equation}
(\Pi,E_+)(x)=(e^{e^x-1},e^x-1)
\end{equation}
 counts pairs of partitions $(\pi_1,\pi_2)$, $\pi_1\in \Pi[V_1]$, $\pi_2\in\Pi[V_2]$, $V_1+V_2=[n]$, $\pi_2$ having exactly $k$ blocks.\end{ex}
With this general interpretation in mind we can give a direct combinatorial proof to Eq. (\ref{fundamental}).  
\begin{prop}\label{matrixproduct}\normalfont
	Let $(M_1, \OO_1)$ and $(M_2,\OO_2)$ be two admissible pairs. The matrix associated to the Riordan product $(M_1(x), \OO_1(x))\ast(M_2(x),\OO_2(x))$ is equal to the product of the respective matrices,
	\begin{equation*}
	\langle M_1(x), \OO_1(x)\rangle\cdot\langle M_2(x),\OO_2(x)\rangle=\langle M_1(x)\cdot M_2(\OO_1(x)),\OO_2(\OO_1(x))\rangle.
	\end{equation*}
\end{prop}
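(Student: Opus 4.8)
The plan is to prove the identity combinatorially, by exhibiting a natural bijection that realizes the matrix product entrywise. Write $A = \langle M_1, \OO_1\rangle$, $B = \langle M_2, \OO_2\rangle$ and $C = \langle M_1 . M_2(\OO_1), \OO_2(\OO_1)\rangle$, so that $A_{n,j} = |M_1.\gamma_j(\OO_1)[n]|$, $B_{j,k} = |M_2.\gamma_k(\OO_2)[j]|$ and $C_{n,k} = |(M_1.M_2(\OO_1)).\gamma_k(\OO_2(\OO_1))[n]|$. The goal $C_{n,k} = \sum_{j} A_{n,j}B_{j,k}$ then amounts to a bijection in which the summation index $j$ is read off as the total number of blocks of a single $\OO_1$-assembly. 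The governing idea is that the two occurrences of $\OO_1$ inside a $C$-structure, one from the factor $M_2(\OO_1)$ and one from the assembly of $\OO_2(\OO_1)$-structures, should be gathered into one common $\OO_1$-assembly whose block set becomes the ground set $[j]$ on which the $(M_2,\OO_2)$-data lives.

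First I would unfold $C_{n,k}$ completely via the definitions of product and substitution. An element is a pair $(m, a)$ with $V_1 + V_2 = [n]$, where $m \in (M_1.M_2(\OO_1))[V_1]$ and $a$ is an assembly of $k$ many $\OO_2(\OO_1)$-structures over $V_2$. Expanding, $m$ consists of $m_1 \in M_1[W_1]$, an $\OO_1$-assembly $b$ over $W_2$ with block set $\pi$, and $m_2 \in M_2[\pi]$, with $W_1 + W_2 = V_1$; while $a = \{q_C\}_{C \in \sigma}$, with $\sigma$ a partition of $V_2$, $|\sigma| = k$, and each $q_C$ an $\OO_1$-assembly $b_C$ over $C$ with block set $\tau_C$ together with $o_C \in \OO_2[\tau_C]$. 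This exhibits the two sources of $\OO_1$-structure explicitly.

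Next I would define the bijection. Gather $b$ and all the $b_C$ into a single $\OO_1$-assembly over $W_2 + V_2$ whose block set is $\rho = \pi \sqcup \bigsqcup_{C} \tau_C$, of cardinality $j := |\pi| + \sum_C |\tau_C|$. Pairing this assembly with $m_1 \in M_1[W_1]$, and noting $W_1 + (W_2 + V_2) = [n]$, produces an element of $M_1.\gamma_j(\OO_1)[n]$, counted by $A_{n,j}$. The remaining data reorganizes into a $B$-structure on the $j$-element set $\rho$: take $m_2 \in M_2[\pi]$ for the $M_2$-factor, and let the $o_C$, grouped by $\sigma$, form an assembly of $k$ many $\OO_2$-structures over $\rho \setminus \pi$; this is exactly an element of $M_2.\gamma_k(\OO_2)[\rho]$, counted by $B_{j,k}$ since $|\rho| = j$. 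The inverse map is transparent: a $B$-structure on the block set of the $\OO_1$-assembly records which blocks carry the $M_2$-datum and how the remaining blocks cluster into the $k$ outer $\OO_2$-parts, and the $\OO_1$-structures on the blocks are split accordingly. Summing over $j$ gives $C_{n,k} = \sum_j A_{n,j}B_{j,k}$, the asserted matrix product. (Note $j \geq k$ since each $\tau_C$ contributes at least one block, so the ranges match the lower-triangular structure.)

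The step I expect to be the main obstacle is the bookkeeping that makes the recombination manifestly bijective and natural: one must verify that $j$ ranges correctly, that the index set $[j]$ on which $B$ acts is canonically the block set $\rho$ (so $B_{j,k} = |M_2.\gamma_k(\OO_2)[\rho]|$ by invariance under bijection), and that no structure is lost or double-counted when $b$ and the $b_C$ are merged and then re-partitioned by the $B$-datum. Conceptually this is the species avatar of two standard identities, distributivity of substitution over the product, $(M_2.\gamma_k(\OO_2))(\OO_1) \cong M_2(\OO_1).\gamma_k(\OO_2)(\OO_1)$, and associativity of substitution, $\gamma_k(\OO_2)(\OO_1) = E_k(\OO_2(\OO_1)) = \gamma_k(\OO_2(\OO_1))$; together they yield $(M_1.M_2(\OO_1)).\gamma_k(\OO_2(\OO_1)) \cong M_1.\bigl(M_2.\gamma_k(\OO_2)\bigr)(\OO_1)$, and the bijection above is precisely the cardinality incarnation of this isomorphism, with $j$ counting inner blocks. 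I would present the explicit bijection as the main argument and invoke these identities to certify its naturality.
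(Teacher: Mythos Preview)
Your proposal is correct and follows essentially the same approach as the paper: both arguments rest on the species isomorphism $M_1.M_2(\OO_1).\gamma_k(\OO_2(\OO_1))\cong M_1.(M_2.\gamma_k(\OO_2))(\OO_1)$ obtained from associativity of substitution and right distributivity over the product, and then read off the bijection with $\sum_j M_1.\gamma_j(\OO_1)[n]\times M_2.\gamma_k(\OO_2)[j]$ by taking $j$ to be the number of blocks of the merged $\OO_1$-assembly. The only cosmetic difference is that the paper applies the species identity first and then describes the resulting structures, whereas you unfold a $C$-structure explicitly and then merge the two $\OO_1$-layers; the underlying bijection is the same.
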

\begin{proof}
	The entries of the Riordan arrays  associated to the pairs  $(M_1(x),\OO_1(x))$ and $(M_2(x),\OO_2(x))$ have respectively  the entries
	\begin{eqnarray*}
A_{n,k}&=&|M_1\gamma_k(\OO_1)[n]|\\
B_{n,k}&=&|M_2\gamma_k(\OO_2)[n]|
	\end{eqnarray*}
	The Riordan product $(M_1(x), \OO_1(x))\ast(M_2(x),\OO_2(x))$ is equal to
	$$(M_1(x).M_2(\OO_1(x)),\OO_2(\OO_1(x))).$$ The $n,k$ entry of the matrix associated to the pair $(M_1.M_2(\OO_1),\OO_2(\OO_1))$ is equal to
	$$C_{n,k}=|M_1.M_2(\OO_1).\gamma_k(\OO_2(\OO_1))[n]|.$$
	By associativity of the substitution of species (since $\gamma_k(\OO_2(\OO_1))=E_k(\OO_2(\OO_1))$), and right distributivity of the operation of substitution  with respect to the product, we have that \begin{equation}
	\label{eq:asomonop}
	M_1.M_2(\OO_1).\gamma_k((\OO_2(\OO_1))=M_1.M_2(\OO_1).\gamma_k(\OO_2)(\OO_1)=M_1.(M_2.\gamma_k(\OO_2))(\OO_1).\end{equation}
The elements of the species in the right hand side of Eq. (\ref{eq:asomonop}) evalauted in the set $[n]$ are of the form $(m_1, a_1,(a_2,m_2)),$ where;
	\begin{itemize}
		\item $m_1$ is an $M_1$-structure over a subset $V_1$ of $[n]$.
		\item $a_1$ is an assembly of $\OO_1$-structures over the set $V_2=[n]-V_1$.
		\item $(m_2,a_2)$ is an element of $(M_2.\gamma_k(\OO_2))[\pi]$, $\pi$ being the partition subjacent to $a_1$. The assembly $a_2$ has $k$ elements.  
	\end{itemize}
Assuming that $|\pi|=j$, the set of elements of above is then equipotent with the set (see Fig. \ref{fig.riordanbijective})
$$M_1.\gamma_j(\OO_1)[n]\times M_2.\gamma_k(\OO_2)[j]$$ 
Since $k\leq |\pi|\leq n$, then
\begin{equation*}
C_{n,k}=|M_1.(M_2.\gamma_k(\OO_2))(\OO_1)[n]|=\sum_{j=k}^n|M_1.\gamma_j(\OO_1)[n]| |M_2.\gamma_k(\OO_2)[j]|=\sum_{j=k}^nA_{n,j}B_{j,k}.
\end{equation*}\end{proof}

\begin{figure}
	\begin{center}
		\includegraphics[width=120mm]{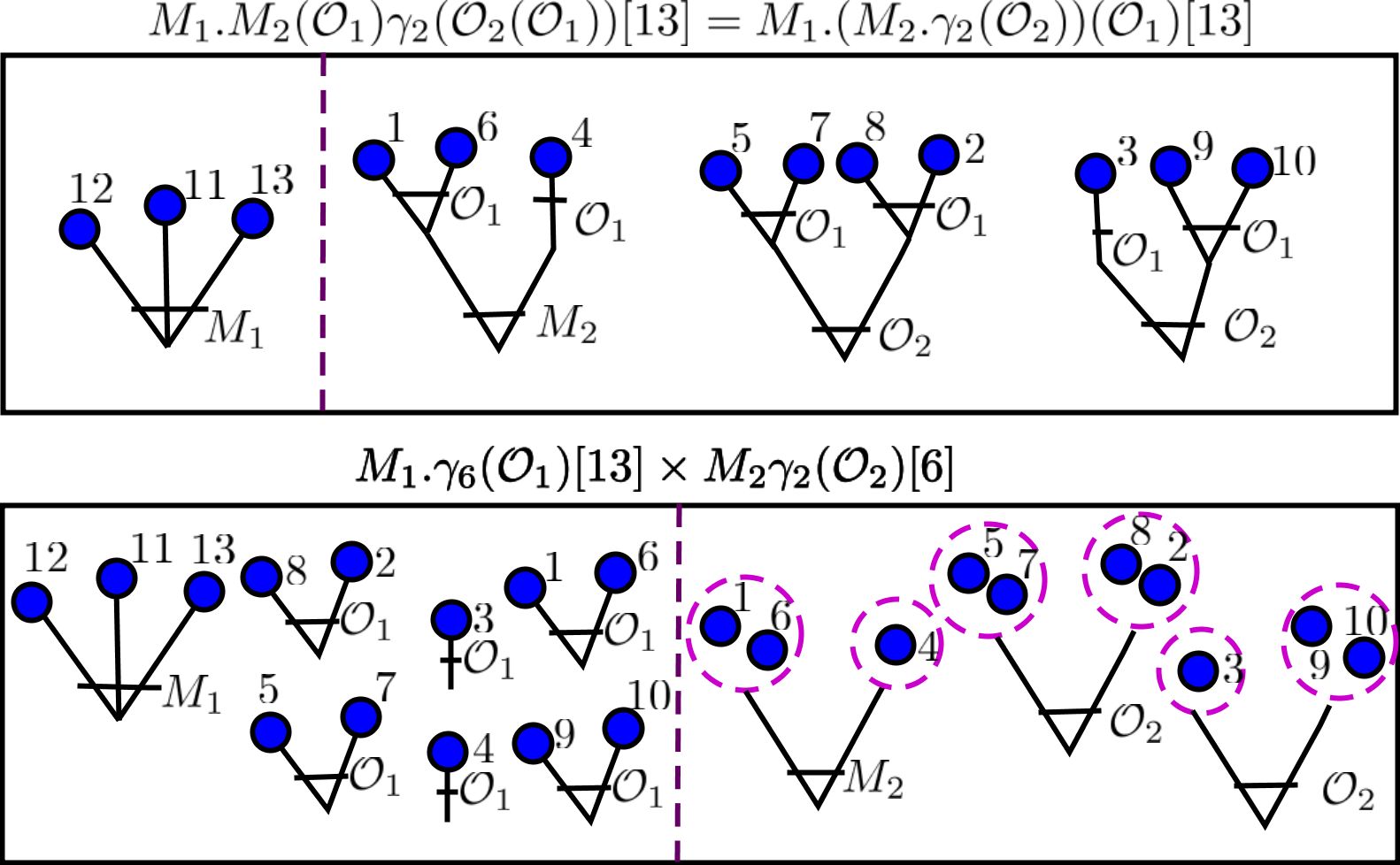}
	\end{center}\caption{The equipotence $\mbox{$M_1.(M_2.\gamma_2(\mathcal{O}_2))(\mathcal{O}_1)[13]\equiv \sum_{j=2}^{13}M_1.\gamma_j(\mathcal{O}_1)[13]\times M_2\gamma_2(\mathcal{O}_2)[j].$}$ }\label{fig.riordanbijective}
\end{figure}
Similar monoidal categories are defined on rigid species. Let  $R, S:\mathcal{L}\rightarrow\mathbb{F}$ two rigid species.  For $l$, a linear order on a set $V$, recall that the shuffle product and substitution are defined respectively by
\begin{eqnarray}
(R.S)[l]&=&\sum_{V_1+V+2=V}R[l_{V_1}]\times S[l_{V_2}]\\\label{eq:shufflesubstitu}
R(S)[l]&=&\sum_{\pi\in\Pi[V]}\prod_{B\in\pi}S[l_B]\times R[\pi] \mbox{  (for $S$ positive).} 
\end{eqnarray} 
\noindent where for $V_1\subseteq V$, $l_{V_1}$ denotes the restriction of the total order $l$ to $V_1$. Note that $l$ induces a total order on any partition of $V$. We say that $B<B'$, for $B,B'\in \pi$ if the minimun element of $l_B$ is smaller in $l$ than the corresponding minimun element of $l_{B'}$. Applications of monops in the context of rigid species with ordinal product and substitution will be consider in a separated paper.

\subsection{Cancellative monoids, cancellative operads, and posets.}
A monoid in the monoidal category $\spe$, the species with  the operation of product, is called (by language abuse) a monoid. An operad is a  monoid in the category $\spe_+$ of positive  species with respect to the substitution.
More specifically. A monoid is a triplet $(M,\nu,\mathfrak{e})$ such that the product $\nu:M\cdot M\rightarrow M$ is associative, and $\mathfrak{e}:1\rightarrow M$, choses the identity, an element of $M[\emptyset]$. We also denote it by $\mathfrak{e}$, by abuse of language. 
We have then the associativity and identity properties
\begin{equation*}
\nu\,(\nu\,(m_1,m_2),m_3)=\nu\,(m_1,\nu\, (m_2,m_3))
\end{equation*}
\begin{equation*}
\nu\,(m,\mathfrak{e})=m=\nu\,(\mathfrak{e},m).
\end{equation*}
\noindent for every triplet of elements $(m_1,m_2,m_3)$ of $M[V_1]\times M[V_2]\times M[V_3]\subseteq (M\cdot M\cdot M)[V]$, and the pairs $(\mathfrak{e},m)$ and $(m,\mathfrak{e})$ respectively in $M[\emptyset]\times M[V]$ and $M[V]\times M[\emptyset].$
\noindent A monoid $(M,\nu,\mathfrak{e})$ (in $\spe$) is called a $c$-monoid if 
\begin{enumerate}
	\item $|M[\emptyset]|=1$
	\item The product $\nu$ satisfies the left cancellation law
	$$\nu\,(m_1,m_2)=\nu\,(m_1,m_2')\Rightarrow m_2=m_2'.$$
\end{enumerate}
And operad, as a monoid in $\spe$ consists of a triplet $(\OO,\eta,\nu)$, where the product
$\eta:\OO(\OO)\rightarrow\OO$ is associative, and  for each unitary set $\{v\}$, $e:X\rightarrow \OO$ chooses the identity in $\OO[\{v\}]$, denoted by $e_{v}$. The product $\eta$ sends pairs of the form $(\{\omega_B\}_{B\in\pi},\omega_{\pi})$ into a bigger structure, $\omega_{V}=\eta(\{\omega_B\}_{B\in\pi},\omega_{\pi})$. Intuitively this product can be thought of as if $\eta$ would assemble the pieces in  $a=\{\omega_B\}_{B\in\pi}$ according to the external structure $\omega_{\pi}$.
 Associativity reads as follows,
\begin{equation*}
\eta(\bar{\eta}(a_1,a_2), \omega_{\pi})=\eta(a_1,\eta(\widehat{a}_2,\omega_{\pi})),
\end{equation*}
\noindent where $\widehat{a}_2$ is isomorphic to $a_2$.
By simplicity we will usually identify $\widehat{a}_2$ with $a_2$.

The identity property reads as follows
\begin{equation*}
\eta(\{e_{v}\}_{v\in V},\omega_V)=\omega_V=\eta(\{\omega_V\},e_{\{V\}}).
\end{equation*}
See \cite{Mendlib} for details and pictures. All this properties can be expressed by the commutativity of the diagrams of monoids in a monoidal category, see Section \ref{section.diagrams}.

\noindent An operad $(\OO,\eta,e)$ is called a $c$-operad if \begin{enumerate}
	\item $|\OO[1]|=1$
	\item The product $\eta$ satisfies the left cancellation law. For $(a,\omega), (a,\omega')\in \OO(\OO)[V]$, we have
	$$\eta(a, \omega)=\eta(a,\omega')\Rightarrow \omega=\omega'.$$ 
\end{enumerate}

\begin{ex}\label{ex.graphs}\normalfont
	The species of simple (undirected) graphs $\mathscr{G}$ is a $c$-monoid (in the category $\spe$). 
	\begin{eqnarray*}
	\nu_1:\Gr.\Gr&\rightarrow&\Gr\\
	(g_1,g_2)&\mapsto& g_1+g_2.
	\end{eqnarray*}
	\noindent The plus sing meaning the disjoint union of the two graphs. There is another monoidal structure over $\Gr$, the product $\nu_2$ sending a pair of graphs to the graph obtained by connecting with edges all the vertices in $g_1$ with those in $g_2$. The two monoidal structures are isomorphic by the correspondence $c:g\mapsto g^c$, sending a graph to its complement, obtained by taking the complementary set of edges (with respect to the complete graph). The natural transformation $c$ is a monoid involutive isomorphism, $c^2=\mathrm{I}_{\Gr}$. The following diagram commutes (see also Fig. \ref{fig.producstgraph})
	\begin{equation}
	\xymatrix{\Gr.\Gr\ar[d]^{c.c}\ar[r]^{\nu_2}&\Gr\ar[d]^{c}\\ \Gr.\Gr\ar[r]^{\nu_1}&\Gr}
	\end{equation}
	
	\begin{figure}
		\begin{center}
		\includegraphics[width=80mm]{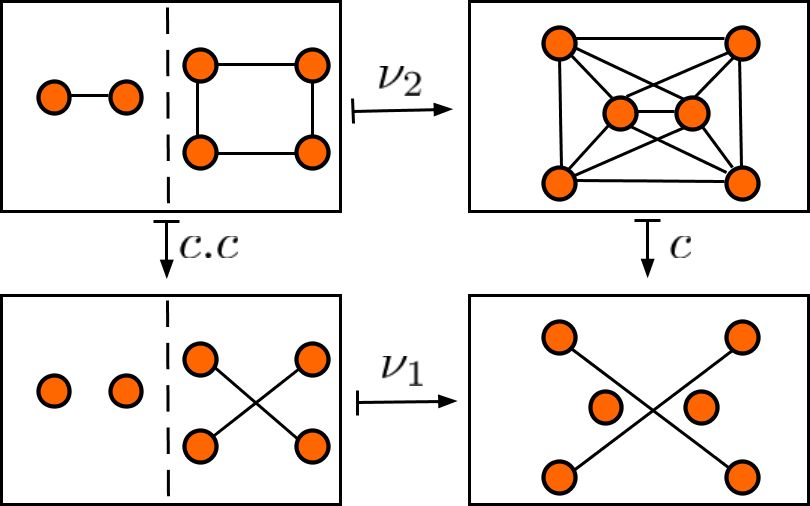}
		\end{center}\caption{The complement isomorphism between two monoidal structures on $\mathscr{G}$}\label{fig.producstgraph}
	\end{figure}
	
	The corresponding positive species $\mathscr{G}_+$ is a $c$-operad  with $\eta(\{g_B\}_{B\in\pi},g_{\pi})=g_V$,  with $g_V$ as the graph obtained by keeping all the edges of the internal graphs plus some more edges created using the information of the external graph $g_{\pi}$.  For each external edge $\{B_{1},B_{2}\}$ 
	of $g_{\pi}$, add all the edges of the form $\{b_{1},b_{2}\}$ 
	with $b_{1}\in B_{1}$ and $ b_{2}\in B_{2}$. The species of connected graphs $\Gr_c$  is a suboperad of $\Gr$. 
\end{ex}

For a $c$-monoid $(M,\nu,\mathfrak{e})$ we define a family of partially ordered sets
\begin{equation*}
P_M[V]=(\uplus_{V_1\subseteq V}M[V_1],\leq_{\nu})=(M.E[V],\leq_{\nu}),\; V\in \BB
\end{equation*} 
\noindent the relation $\leq_{\nu}$ defined by 
\begin{equation*}
m_1\leq_{\nu} m_2,\mbox{ if }\nu\,(m_1,m_2')=m_2
\end{equation*}
\noindent for some $m'_2$. The poset $P_M[V]$ has a zero, the unique element of $M[\emptyset]$. The M\"obius cardinal of $P_M[V]$, $|P_M[V]|_{\mu}$ is defined to be 
\begin{equation*}|P_{M}[V]|_{\mu}=\sum_{m\in M[V]}\mu(\widehat{0},m),
\end{equation*}
\noindent where $\mu$ is the M\"obius function of $P_M[V]$.
In a similar way, for a $c$-operad $(\OO,\eta,e)$ we define a family of posets
\begin{equation*}
P_{\OO}[V]=(E_+(\OO)[V],\leq_{\nu}).
\end{equation*}
\noindent The elements of $E_+(\OO)$ are assemblies of $\OO$-structures. The order relation $\leq_{\nu}$ defined by
\begin{equation*}
a_1\leq a_2 \mbox{ if there exists } \widehat{a}_2 \mbox{ such that }\overline{\eta}(a_1,\widehat{a}_2)=a_2, 
\end{equation*}
\noindent where $\widehat{a}_2$ is an assembly with labels over the partition $\pi_1$ associated to $a_1$, and having $\widehat{\pi}_2$ as associated partition, $\widehat{a}_2=\{\widehat{w}_D\}_{D\in\widehat{\pi}_2}$. The product $\overline{\eta}$  defined as follows \begin{equation*}
\overline{\eta}(a_1,\widehat{a}_2)=\{\eta(\{\omega_C\}_{C\in D},\widehat{w}_D)\}_{D\in\widehat{\pi}_2}.
\end{equation*}
The poset $P_{\OO}[V]$ has a zero, the assembly of singletons $\{e_v\}_{v\in V}$, $e_v$ the unique element of $\OO[\{v\}].$
For $M$ a $c$-monoid and $\OO$ a $c$-operad, we define the M\"obius generating functions of the respective family of posets
\begin{eqnarray*}
	\Mob P_M(x)&=&\sum_{n=0}^{\infty}|P_M[n]|_{\mu}\frac{x^n}{n!}\\
	\Mob P_{\OO}[n]&=&\sum_{n=1}^{\infty}|P_{\OO}[n]|_{\mu}\frac{x^n}{n!}.
\end{eqnarray*}	
\noindent We have that \begin{eqnarray}
\Mob P_M(x)&=&M^{-1}(x)\label{eq.invmonoid}\\\Mob P_{\OO}(x)&=&M^{\coi}(x).\label{eq.inop}
\end{eqnarray}\noindent See \cite{Mend-Yang,  Mendlib}. Moreover, we have.
\begin{prop}\label{prop.umbralinv.ap.bin}\normalfont
	If we define the Appel and binomial families conjugated respectively to $M(x)$ and $\OO(x)$
	\begin{eqnarray}
	\widehat{a}_n(x)&=&\sum_{(m_{V_1},V_2)\in P_M[n]}x^{|V_2|}=\sum_{k=1}^n \binom{n}{k}|M[k]|x^{n-k}\\
	\widehat{p}_n(x)&=&\sum_{a\in P_{\OO}[n]}x^{|a|}=\sum_{k=1}^n|\gamma_k(\OO)[n]|x^k,
	\end{eqnarray}
	\noindent then, we have that their corresponding umbral inverses are obtained by M\"obius inversion over the respective posets
	\begin{eqnarray}a_n(x)&=&\sum_{(m_{V_1},V_2)\in P_M[n]}\mu(\widehat{0},(m_{V_1},V_2))x^{|V_2|}\\
		p_n(x)&=&\sum_{a\in P_{\OO}[n]}\mu(\widehat{0},a)x^{|a|}=\sum_{k=1}^n|\gamma_k(\OO)[n]|_{\mu}x^k.
	\end{eqnarray} 
\end{prop}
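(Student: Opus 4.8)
The plan is to reduce the statement to a comparison of coefficient matrices. By Eq.~(\ref{eq:umbral}) together with Eq.~(\ref{inversematrix}), saying that $\{a_n(x)\}$ is the umbral inverse of $\{\widehat{a}_n(x)\}$ is equivalent to saying that the two lower triangular matrices of their coefficients are mutually inverse. Now $\widehat{a}_n(x)$ is, by (\ref{riordansheffer}), the Appel sequence conjugate to $(M(x),x)$, and $\widehat{p}_n(x)$ is the binomial sequence conjugate to $(1,\OO(x))$. Hence their umbral inverses are the sequences \emph{associated} to $(M(x),x)$ and $(1,\OO(x))$, i.e. conjugate to the Riordan inverses $(M^{-1}(x),x)$ and $(1,\OO^{\coi}(x))$; by (\ref{riordansheffer}) these have coefficients $\binom{n}{k}M^{-1}[n-k]$ and $\gamma_k(\OO^{\coi})[n]$ respectively. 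So it suffices to prove that the M\"obius sums defining $a_n(x)$ and $p_n(x)$ reproduce exactly these coefficients.

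The crux is the local structure of the intervals, which is where the cancellation hypothesis enters. First I would prove that in $P_M[V]$ the lower interval $[\widehat{0},(m,V_2)]$ below an element $m\in M[V_1]$ is isomorphic to the interval $[\widehat{0},m]$ inside $P_M[V_1]$, and in particular depends only on $m$ and not on the complement $V_2$. The point is that, by the left cancellation law, for a given $m$ and a candidate left divisor $m'$ the quotient $m''$ with $\nu(m',m'')=m$ is unique, so the elements below $(m,V_2)$ are exactly the left divisors of $m$. Granting this, the coefficient of $x^{k}$ in $a_n(x)$ is $\sum_{|V_1|=n-k}\sum_{m\in M[V_1]}\mu(\widehat{0},m)=\binom{n}{k}\,|P_M[n-k]|_{\mu}$, and Eq.~(\ref{eq.invmonoid}) identifies $|P_M[n-k]|_{\mu}$ with $M^{-1}[n-k]$, which is precisely the wanted Appel coefficient.

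For the binomial case I would establish the analogous, but multiplicative, statement: the interval $[\widehat{0},a]$ below an assembly $a=\{\omega_B\}_{B\in\pi}$ in $P_{\OO}[V]$ factors as the product $\prod_{B\in\pi}[\widehat{0},\omega_B]$ of the corresponding intervals in the $P_{\OO}[B]$ (again using cancellation of $\eta$ to make the block-wise quotients unique), so that $\mu(\widehat{0},a)=\prod_{B\in\pi}\mu(\widehat{0},\omega_B)$. Summing over all assemblies with $k$ blocks and invoking the exponential formula for species turns $\sum_{|a|=k}\mu(\widehat{0},a)$ into $\gamma_k$ of the series $\Mob P_{\OO}(x)$, which by Eq.~(\ref{eq.inop}) equals $\OO^{\coi}(x)$; thus the coefficient of $x^{k}$ in $p_n(x)$ is $\gamma_k(\OO^{\coi})[n]$, which is by definition $|\gamma_k(\OO)[n]|_{\mu}$, as required.

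I expect the interval-structure step to be the main obstacle: everything downstream is bookkeeping with (\ref{eq.invmonoid}), (\ref{eq.inop}) and the exponential formula, whereas the isomorphisms $[\widehat{0},(m,V_2)]\cong[\widehat{0},m]$ and $[\widehat{0},a]\cong\prod_{B}[\widehat{0},\omega_B]$ are exactly what the cancellation laws buy us and must be verified with care. As an alternative that avoids quoting the inverse-matrix formula, I note one can prove the umbral identity $\widehat{a}_n(\mathbf{a})=x^n$ directly: using the product $\nu$ and left cancellation, pairs $(y,z)$ with $y=(m,V_2)\in P_M[n]$ and $z\in P_M[V_2]$ biject with pairs $z'\le\widetilde{y}$ in $P_M[n]$, where $\widetilde{y}$ is obtained by multiplying, under which $\mu(\widehat{0},z)=\mu(z',\widetilde{y})$ and the weight is preserved; the sum then collapses by the incidence-algebra identity $\sum_{z'\le\widetilde{y}}\mu(z',\widetilde{y})=\delta_{\widehat{0},\widetilde{y}}$ to the single term $\widetilde{y}=\widehat{0}$, giving $x^{n}$. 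The same cancellation bijection, with $\overline{\eta}$ in place of $\nu$, handles the binomial case.
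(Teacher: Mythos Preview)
Your argument is correct, but the paper takes a different route. It gives no direct proof here; instead it defers to the more general Theorem~\ref{teo.inversematrix} on $c$-monops, of which the Appel and binomial statements are the degenerate specializations $(M,X)$ and $(1,\OO)$. The proof of that theorem is a self-contained incidence-algebra computation in $P_{(M,\OO)}[n]$: one starts from $\sum_{(m_1,a_1)\le (m_2,a_2)}\mu((m_1,a_1),(m_2,a_2))=\delta_{\widehat{0},(m_2,a_2)}$, sums over all $(m_2,a_2)$ with $|a_2|=j$, interchanges sums, and uses Proposition~\ref{propertiesposetmonop} (coideals and intervals in $P_{(M,\OO)}[n]$ are again of the form $P_{(M,\OO)}[\pi]$) to collapse the result into the matrix identity $\sum_k C_{n,k}\widehat{C}_{k,j}=\delta_{n,j}$.

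Your main line is arguably more transparent for these two special cases, but it imports Eqs.~(\ref{eq.invmonoid}) and~(\ref{eq.inop}) from the cited references as black boxes, whereas the paper's argument is self-contained and in fact recovers those identities as by-products. The ``alternative'' you sketch at the end---proving $\widehat{a}_n(\mathbf{a})=x^n$ directly by the bijection $(y,z)\leftrightarrow(z',\widetilde{y})$ and the defining relation $\sum_{z'\le \widetilde{y}}\mu(z',\widetilde{y})=\delta_{\widehat{0},\widetilde{y}}$---is essentially the paper's proof of Theorem~\ref{teo.inversematrix} specialized to the monoid (respectively operad) setting, so you have in effect anticipated its method.
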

\begin{proof}
	A proof of a more general proposition is given in Section \ref{secposetmonop}, Theorem \ref{teo.inversematrix}.
\end{proof}

\subsection{Examples of $c$-Monoids and Appel polynomials} 
\begin{ex}\label{ex.Boolean}\normalfont {\em Pascal matrix, shifted powers.}
	For the monoid $E$, $P_E[n]$ is the Boolean algebra of subsets of $[n]$. The conjugate Appel is  the shifted power sequence $$\sum_{A\subseteq [n]}x^{|A|}=\sum_{k=0}^n\binom{n}{k}x^k=(x+1)^n.$$ The umbral inverse obtained by M\"obius inversion over $P_E[n]$ gives us their  Appel umbral inverse  
		$$\sum_{A\subseteq [n]}\mu(\emptyset,[n]-A)x^{|A|}=\sum_{k=0}^n\binom{n}{k}(-1)^{n-k}x^{k}=(x-1)^n.$$ 
		Consider the power $E^r$, the ballot monoid. The elements of $E^r[V]$ are weak $r$ compositions of $V$, i.e., $r$-uples of pairwise disjoint sets $(V_1,V_2,\dots,V_r)$ (some of them possibly empty) whose union is $V$. It is a c-monoid by adding r-uples component to component:
		
		$$((V_1,V_2,\dots, V_r),(V'_1,V'_2,\dots, V'_r))\stackrel{\nu}{\mapsto}(V_1+V'_1,V_2+V'_2,\dots,V_r+V'_r).$$
		The ballot poset $P_{E^r}[n]$ gives us the combinatorial interpretation of the umbral inversion between the Appel families $(x+r)^n$ and $(x-r)^n$.
\end{ex}
\begin{ex}{\em Euler numbers}\normalfont\label{ex.eulerpolynomials}
	The species of sets of even cardinal, $E^{\mathrm{ev}}$, is a submonoid of $E$. Its generating function is equal to the hyperbolic cosine, $$E^{\mathrm{ev}}(x)=\frac{e^x+e^{-x}}{2}=\cosh(x).$$ It gives us  $P_{E^{\mathrm{ev}}}[n]$, the poset of subsets of $[n]$ having even cardinal. Since $$\mathrm{sech}(x)=E^{\mathrm{ev}}(x)^{-1}=1+\sum_{k=1}^{\infty} (-1)^nE^{*}_n\frac{x^{2n}}{2n!},$$ ($E^*_n$ being Euler or secant numbers, that count the number of zig permutations, OEIS A000364). We have that $$|P_{E^{\mathrm{ev}}}[n]|_{\mu}=\begin{cases}(-1)^{\frac{n}{2}}E_{n/2}^{*}=\mu(\widehat{0}, [n])&\mbox{$n$ even}\\0&\mbox{$n$ odd.}\end{cases}$$ 
	
	 The corresponding conjugate Appel polynomials are (OEIS A119467)
	\begin{equation*}
	\widehat{a}_n(x)=\sum_{A\subseteq [n], |A|\mbox{ even}}x^{n-|A|}=\sum_{k=0}^{\lfloor \frac{n}{2}\rfloor}\binom{n}{2k}x^{n-2k}=\frac{(x+1)^n+(x-1)^n}{2}.
	\end{equation*}
	\noindent and its umbral inverses (OEIS A119879)
	\begin{equation*}
	a_n(x)=\sum_{k=0}^{\lfloor \frac{n}{2}\rfloor}\binom{n}{2k}(-1)^kE^*_{k}x^{n-2k}.
	\end{equation*}
	We have the identity (in umbral notation), $$(\mathbf{a}+1)^n+(\mathbf{a}-1)^n=2x^n.$$
	And, making $E_n=|P_{E^{\mathrm{ev}}}[n]|_{\mu}=a_n(0)$,
	$$(\mathbf{E}+1)^n+(\mathbf{E}-1)^n=2\delta_{n,0}.$$
	The classical Euler polynomials $\mathcal{E}_n(x)$ are connected with $a_n(x)$ by the formulas
	\begin{eqnarray*}
	a_n(x)&=&\sum_{k=0}^n\binom{n}{k} 2^k\mathcal{E}_k(\frac{x}{2})\\
	\mathcal{E}_n(x)&=&\frac{1}{2^n}\sum_{k=0}^n \binom{n}{k}(-1)^k a_{n-k}(2x)
	\end{eqnarray*} 
The first identity follows by manipulating their generating functions, the second by binomial inversion.
\end{ex}
\begin{ex}
	\normalfont {\em Free commutative monoid generated by a positive species.} Let $M$ be a positive species, the free commutative monoid generated by $M$ is $E(M)$, the species of assemblies of $M$-structures. It is a c-monoid with the operation $(a_1, a_2)\stackrel{\nu}{\mapsto}a_1+a_2$, taking the union of pairs of assemblies. The order in $P_{E(M)}[n]$ is given by the subset relation on partial assemblies: $(a_1,V_1)\leq (a_2,V_2)$ if $a_1\subseteq a_2$. Its M\"obius function is \begin{equation}\mu((a_1,V_1),(a_2,V_2))=(-1)^{|a_2-a_1|}.\end{equation}
	The corresponding Appel polynomials are
	\begin{eqnarray}
	\widehat{a}_n(x)&=&\sum_{(a,V)\in P_{E(M)}[n]}x^{|V|}\\
	a_n(x)&=&\sum_{(a,V)\in P_{E(M)}[n]}(-1)^{|a|}x^{|V|}
	\end{eqnarray}
	Subsequent Examples \ref{ex.Hermite}, \ref{ex.Bell-appel}, and \ref{ex.graph.appel} are particular cases of this general construction.
\end{ex}
	\begin{ex}\normalfont\label{ex.Hermite}
		{\em Hermite Polinomials.}  Consider the free commutative monoid generated by $E_2$, the species of sets of cardinal $2$.  It is the species of parings. Equivalently, the species of partitions whose blocks all have cardinal $2$, $E(E_2)$, $$E(E_2)(x)=e^{\frac{x^2}{2}}.$$ The elements of the poset $P_{E(E_2)}[n]$ are partial partitions of $[n]$ having blocks of length two (partial pairings), endowed with the relation $\pi_1\leq \pi_2$ if every block of $\pi_1$ is a block of $\pi_2$.  The signless Hermite polynomials $\widehat{H}_n(x)$, are obtained as a sum over the elements of $P_{E(E_2)}[n]$. Their umbral inverses, the Hermite polynomials $H_n(x)$ are obtained by M\"obius inversion, see Fig. \ref{fig.hermite}. In the figure, partial pairing are   identified with a total partition having blocks of either size one or two. For example, following this convention, the partial partition of pairings  $25|57$ in $\{1,2,3,4,5, 6,7\})$ is represented as a total partition $25|57|1|3|6$ (also represented as the pair  $(25|57, \{1, 3, 6\})$). In the following equations, $\pi$ will represent a partial partition consisting only of pairings. 
		\begin{eqnarray*}
		\widehat{H}_n(x)&=&\sum_{\pi\in P_{E(E_2)}[n]} x^{n-2|\pi|}
		=\sum_{0\leq k\leq \lfloor\frac{n}{2}\rfloor}\binom{n}{2k}\frac{(2k)!}{k!2^k}x^{n-2k}\\&=&\sum_{0\leq k\leq \lfloor\frac{n}{2}\rfloor}\binom{n}{2k}(2k-1)!!x^{n-2k} \\
		H_n(x)&=&\sum_{\pi\in P_{E(E_2)}[n]}\mu(\widehat{0},\pi)x^{n-|\pi|}=\sum_{\pi\in P_{E(E_2)}[n]}(-1)^{|\pi|}x^{n-2|\pi|}\\
		&=&\sum_{0\leq k\leq \lfloor\frac{n}{2}\rfloor}\binom{n}{2k}(-1)^k(2k-1)!!x^{n-2k}.
		\end{eqnarray*}
		This elementary M\"obius inversion is closely related to Rota-Wallstrom combinatorial approach to stochastic integrals  for the case of a  totally random Gaussian measure. See \cite{Rota-Wallstrom}, and \cite{Pecatti}. 
	\end{ex}
	\begin{figure}
\begin{center}
	\includegraphics[width=157mm]{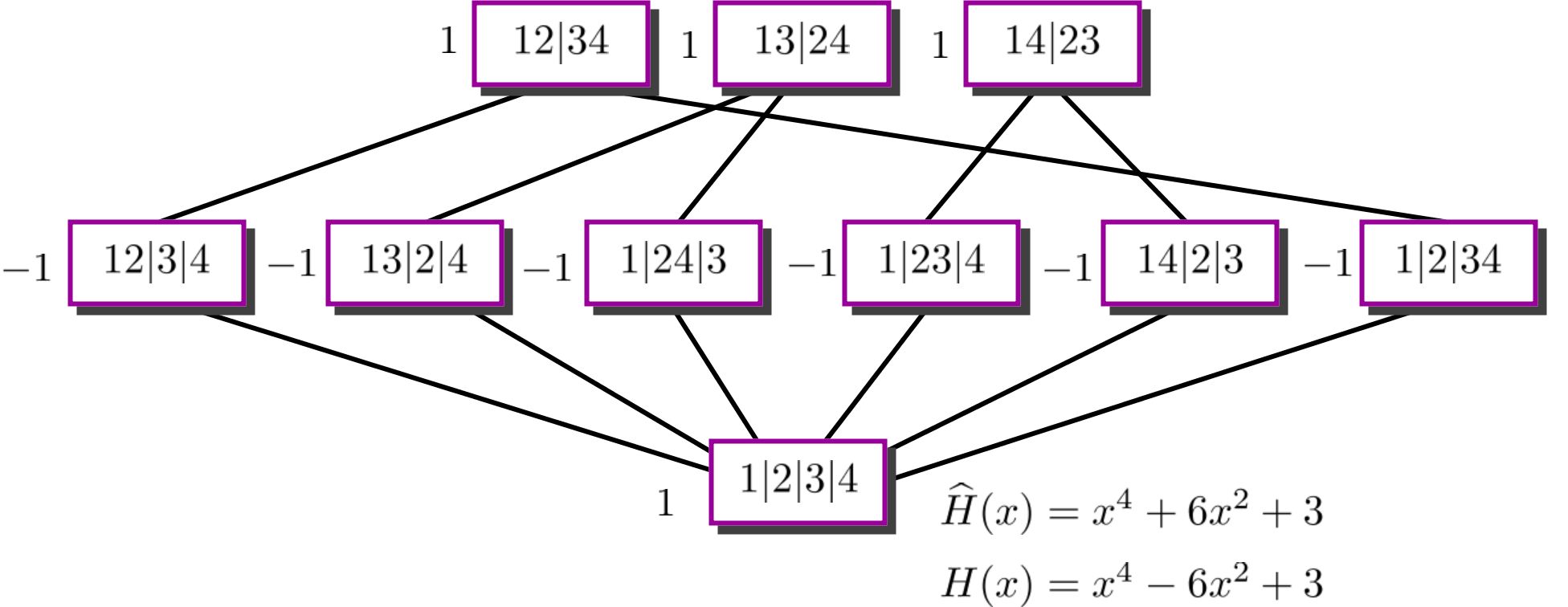}
\end{center}\caption{The poset $P_{E(E_2)}[3]$, and the Hermite polynomial $H_4(x)$.}\label{fig.hermite}		
\end{figure}
\begin{ex}\label{ex.Bell-appel}\normalfont{\em Bell-Appel polynomials.} The free commutative monoid generated by $E_+$, is equal to the species of partitions $\Pi=E(E_+)$. The Bell-Appel polynomials conjugate to $\Pi(x)=e^{e^x-1}$ are
	\begin{equation*}
	\mathfrak{B}_n(x)=\sum_{k=0}^n \binom{n}{k}B_k x^{n-k}.
	\end{equation*}
The M\"obius function of $P_{\Pi}[n]$ is equal to $\mu(\hat{0},\pi)=(-1)^{|\pi|}$. 
Then, the umbral inverse $\widehat{	\mathfrak{B}}_n(x)$ is equal to
\begin{equation*}
\widehat{\mathfrak{B}}_n(x)=x^n+\sum_{k=1}^n\binom{n}{k}(\sum_{j=1}^k (-1)^jS(k,j))x^{n-k}.
\end{equation*}
\end{ex}
\begin{ex}\label{ex.graph.appel}\normalfont Consider the species of graphs $\mathscr{G}$. Since we have the identity  $\mathscr{G}=E(\mathscr{G}_c)$, $\mathscr{G}_c$ being the species of connected graphs, the monoidal structure defined by $\nu_1$ in Ex. \ref{ex.graphs} is that of the free commutative monoid generated by $\mathscr{G}_c$. The corresponding Appel polynomials (conjugate to $\mathscr G(x)$) are 
	\begin{equation*}
	\widehat{\mathfrak{g}}_n(x)=\sum_{k=0}^n \binom{n}{k}2^{\binom{k}{2}}x^{n-k}.
	\end{equation*}
	The M\"obius function of $P_{\mathscr{G}}[n]$ is given by 
	$\mu(\widehat{0},G)=
	(-1)^{k(G)}$, where $k(G)$ is the number of connected components of $G$ (The empty graph is assumed to have zero connected components).  
	Their umbral inverses are the polynomials
	\begin{equation*}
	\frak{g}_n(x)=\sum_{k=1}^n\binom{n}{k}( \sum_{j=1}^{k}(-1)^j|\gamma_j(\mathscr{G}_c)[k]|)x^{n-k},
	\end{equation*}
	\noindent $\gamma_j(\mathscr{G}_c)$ being the species of graphs having exactly $j$ connected components.
	 
	\end{ex}
\begin{ex}\label{ex.monoidlist}\normalfont
	The species of lists $\LL$ (totally ordered sets) is a $c$-monoid with product $\nu:\LL.\LL\rightarrow\LL$, the concatenation of lists. The poset $P_{\LL}[n]$ has as maximal elements  the lists on $[n]$. We have that $l_1\leq l_2$ if $l_1$ is an initial segment of $l_2$. The M\"obius function is as follows,
	\begin{equation*}
	\mu(\widehat{0},l)=\begin{cases}1&\mbox{if $l$ is the empty list}\\-1& \mbox{if $l$ is a singleton}\\ 0&\mbox{otherwise.}\end{cases}
	\end{equation*}
	The conjugate polynomials of $\LL(x)=\frac{1}{1-x}$ are $$\widehat{a}_n(x)=\sum_{l\in P_{\LL}[n]}x^{n-|l|}=\sum_{k=0}^n\binom{n}{k}k! x^{n-k}=\sum_{k=0}^n (n)_k x^{n-k} =\sum_{k=0}^{\infty}D^kx^n.$$ Their umbral inverses are
	$$a_n(x)=\sum_{l\in P_{\LL}[n]}\mu(\hat{0},l)x^{n-|l|}=x^n-nx^{n-1}.$$
 \end{ex}
		
\subsection{Examples of $c$-operads and binomial families} 

\begin{ex}\label{ex.operadlists}The operad of lists and binomial Laguerre polynomials.\normalfont
	The species of non-empty lists $\LL_+$ is an operad (the associative operad) with $\eta$ the concatenation of lists following the order given by an external list, $l_{\pi}=B_1 B_2\dots B_k$
	$$\eta(\{l_B\}_{B\in\pi},l_{\pi})=l_{B_1}l_{B_2}\dots l_{B_k}.$$ 
	The elements of $P_{\LL_+}[n]$ are linear partitions (partition with a total order on each block). The coefficients counting  such linear partitions having $k$ blocks are the Lah numbers $$\binom{n-1}{k-1}\frac{n!}{k!}.$$ Hence, the polynomials obtained by summation on $P_{\LL_+}[n]$ are the unsigned Laguerre polynomials (of binomial type)
	\begin{equation}
	p_n(x)=\sum_{\pi\in P_{\LL_+}[n]}x^{|\pi|}=\sum_{k=1}^n\binom{n-1}{k-1}\frac{n!}{k!}x^k=L_n(-x).
	\end{equation}
	Since $\mu(\widehat{0},\pi)=(-1)^{n-|\pi|}$, by M\"obius inversion we get that
	\begin{equation}
	\widehat{p}_n(x)=\sum_{k=1}^n(-1)^{n-k}\binom{n-1}{k-1}\frac{n!}{k!}x^k=(-1)^nL_n(x).
	\end{equation} 
\end{ex}

\begin{ex}\normalfont\label{ex.Touchard}
	{\em Touchard polynomials.} The operad $E_+$ gives rise to the poset $E_+(E_+)[n]=\Pi_+[n]$ of non-empty partitions ordered by refinement.  The Touchard polynomials $T_n(x)$ conjugate to $E_+(x)=e^x-1$ are
	$$T_n(x)=\sum_{k=1}^n |\gamma_k(E_+)[n]|x^k=\sum_{k=1}^nS(n,k)x^k.$$ \noindent Where $S(n,k)$ are the Stirling numbers of the second kind. By M\"obius inversion we obtain their umbral inverses
	$$(x)_n=\sum_{k=1}^n |\gamma_k(E_+)[n]|_{\mu}x^k=\sum_{k=1}^ns(n,k)x^k,$$ where $s(n,k)$ is the Stirling number of the first kind and $(x)_n=x(x-1)(x-2)\dots (x-n+1)$ the falling factorial.
\end{ex}
\begin{ex}\label{ex.sinhop}\normalfont
	The species of sets having odd cardinal, $E^{\mathrm{odd}}$ inherit the operad structure from $E_+$. Hence, it is a $c$-operad. The poset $\Pi_{\mathrm{odd}}[n]=P_{E^{\mathrm{odd}}}[n]$ is formed by the partitions of $[n]$ where each block have odd length, ordered by refinement. Since $$E^{\mathrm{odd}}(x)=\frac{e^x-e^{-x}}{2}=\sinh(x),$$ the substitutional inverse of $E^{\mathrm{odd}}(x)$ is the arcsin series,
	$$\arcsin(x)=\ln(x+\sqrt{1+x^2})=\sum_{n=0}^{\infty}(-1)^n(2n-1)!!^2\frac{x^{2n+1}}{(2n+1)!}.$$
	The associated polynomials codify the M\"obius function of the poset $\Pi_{\mathrm{odd}}[n]$
	\begin{equation*}
	\mathfrak{st}_n(x)=x\prod_{k=1}^{n-1}(x+n-2k)=\sum_{\pi\in\Pi_{\mathrm{odd}}[n]}\mu(\hat{0},\pi)x^{|\pi|}.
	\end{equation*}
	It may be easily checked that $\frac{e^D-\,e^{-D}}{2}\mathfrak{st}_n(x)=\frac{\mathfrak{st}_n(x+1)-\mathfrak{st}_n(x-1)}{2}=n\mathfrak{st}_{n-1}(x).$ They are related to the Steffensen polynomials, \cite{Roman-Rota}, Ex. 6.1., by

		\begin{equation*}	\mathfrak{st}_n(x)=2^n\mathrm{st}_n(\frac{x}{2})
		\end{equation*}
		
\end{ex}
\begin{ex}\label{ex.cycles}\normalfont {\em The operad of cycles.}  Consider the rigid species of cyclic permutations $C$,
	\begin{equation}
	C[{v_1,v_2,\dots,v_n}]=\{f|f:V\rightarrow V \mbox{ a cyclic permutation }\}
	\end{equation}
	A cyclic permutation can be identified with a linear  order $l$ having $v_1$ as first element, $l_1=v_1$. 
	Its generating function is 
	 $C(x)=\ln(\frac{1}{1-x})$. It is a shuffle $c$-operad with product 
	 $\eta(\{l_B\}_{B\in\pi},l_{\pi})=l_{B_1}l_{B_{i_2}}\dots l_{B_{i_k}},$
	 the concatenation of the internal linear orders following the external order, $l_{\pi}=B_1 B_{i_2}\dots B_{i_k}$. Since $B_1$ is the first element of the totally ordered set $\pi=B_1<B_2<\dots<B_k$, the minimun element of $B_1$ is $v_1$, and the product gives again a cyclic permutation. 
	 
	 The elements of the poset $P_C[n]$ are permutations (assemblies of cyclic permutations), hence the conjugate sequence of  $C(x)=\ln(\frac{1}{1-x})$ is the increasing factorial, 
	 $$(x)^{\overline{n}}=x(x+1)\dots(x+n-1)=\sum_{k=1}^n|s(n,k)|x^k,$$
	 $s(n,k)$ being the Stirling numbers of the first kind. 
	 A cycle $c=(c_1c_2\dots c_j)$ of a permutation $\sigma$ is said to be monotone if $c_1<c_2<\dots<c_j$.
	 If $\sigma$ is a permutation with $k$ cycles, the M\"obius function was proved to be (see \cite{JoniRS}) 
	 \begin{equation*}
	 \mu(\hat{0},\sigma)=\begin{cases}
	 (-1)^{n-k}&\mbox{if all the cycles of $\sigma$ are monotone,}\\ 0&\mbox{ otherwise}.
	 \end{cases}
	 \end{equation*}
	 Hence, their umbral inverses are
	 \begin{equation*}
	 \sum_{k=1}^n (-1)^{n-k}S(n,k)x^k=(-1)^nT_n(-x)
	 \end{equation*}
	 \noindent $T_n(x)$ being Touchard polynomials.
\end{ex}

\begin{ex}\normalfont
	The Abel sequence $A_n(x;a)=x(x+a)^{n-1}$ associated a $xe^{-ax}$. For $a=1$, it is conjugate to the generating series  $\mathscr{A}(x)$ of rooted trees. The species of rooted trees has a $c$-operad structure (see \cite{Mend-Yang}). In \cite{Reiner} the poset $P_{\mathscr{A}}[n]$ was constructed, and its M\"obius function was computed in \cite{Sagan}.
\end{ex}
\begin{ex}
	The Bessel polynomials of Krall and Frink $y_n(x)$. \normalfont If we make $K_n(x)=x^ny_{n-1}(\frac{1}{x})$ it is the associate sequence of $x-\frac{x^2}{2}$. Hence, the conjugate to  $B(x)$, $B$ being the species of commutative parethesizations, or commutative binary trees, satisfying the implicit equation   
	\begin{equation*}B=X+E_2(B).\end{equation*}
	It is the free operad generated by $E_2$, a $c$-operad with the substitution of commutative parethesizations (or the grafting of commutative binary trees). Computing the inverse of $P(x)=x-\frac{x^2}{2}$ we obtain that 
	$B(x)=1-\sqrt{1-2x^2}$.
	The polynomials $K_n(x)$  have the following combinatorial interpretation,
	\begin{equation*}
	K_n(x)=\sum_{k=1}^n B_{n,k}x^k,
	\end{equation*}\noindent where $B_{n,k}$ is the number of forests having $k$ commutative binary trees with $n$ labeled leaves. The M\"obius function of such forests is
	\begin{equation*}
	\mu(\hat{0},a)=\begin{cases}0&\mbox{if $a$ has a tree with more than two leaves}\\
	(-1)^k&\mbox{$k$=number of binary trees with two leaves.}\end{cases}
	\end{equation*}
	
	Their umbral inverse is the family $\widehat{K}_n(x),$
	\begin{equation*}
	\widehat{K}_n(x)=\sum_{k=1}^{\lfloor \frac{n}{2}\rfloor}\binom{n}{2k}(2k-1)!! (-1)^k x^{n-k}.
	\end{equation*}
	  
\end{ex}
\begin{ex}\normalfont The generating function $\Gr_c(x)=\ln(1+\sum_{k=1}^{\infty}2^{\binom{n}{2}}\frac{x^n}{n!}),$ of the $c$-operad $\Gr_c$ of connected graphs (Ex. \ref{ex.graphs}) has as conjugate the binomial family
	\begin{equation*}
	\widehat{G}_n(x)=\sum_{k=1}^n |\gamma_k(\Gr_c)[n]|x^k.
	\end{equation*}
\noindent They are the generating function of graphs according to the number of their connected components. An explicit expression for their umbral inverses 
\begin{equation*}
G_n(x)=\sum_{k=1}^n |\gamma_k(\Gr_c)[n]|_{\mu}x^k,
\end{equation*} 
\noindent is not known.
\end{ex}
\subsubsection{The Dowling operad}\label{sec.dowoperad}
Let $G$ be a finite group of order $m$. Denote by $\Dowl$ the rigid species of $G$-colored ordered sets with an extra condition. The minimun element  of the set is colored with the identity $1$ of $G$. More explicitly, $\Dowl[\emptyset]=\emptyset$, and for a nonempty totally ordered set $V=\{v_1,v_2,\dots,v_n\}$, 
\begin{equation}
\Dowl[V]=\{f|f:V\rightarrow G,\; f(v_1)=1\}
\end{equation}
This kind of colorations will be called {\em unital}. 
It has as generating function 
\begin{equation}
\Dowl(x)=\sum_{n=0}^{\infty}m^{n-1}\frac{x^n}{n!}=\frac{1}{m}\sum_{n=0}^{\infty}\frac{(mx)^n}{n!}=\frac{e^{mx}-1}{m}.
\end{equation}
This species has a structure of $c$-operad, $\eta:\Dowl(\Dowl)\rightarrow \Dowl$, given as follows. The structures of $\Dowl(\Dowl)$ are pairs of the form $(\{f_B\}_{B\in\pi},g_{\pi})$ where each $f_B$ is a unital coloration on $B$, and $g_{\pi}$ is a unital coloration on $\pi$ (recall that $\pi$ is a totally ordered set, $B_1<B_2<\dots <B_k$, ordered according with their minimun element).
The product $h_V=\eta(\{f_B\}_{B\in\pi}, h_{\pi})$ is obtained by multiplying by the right the ``internal'' colors on each block $B\in \pi$ given by $f_{B}$,  times the ``external'' one given by $h(B)$. Let $b\in V$, and $B$ the unique block of $\pi$ where it belongs. Then define $h_V(b)$ by 
\begin{equation}
h_V(b)=f_{B}(b)\cdot h_{\pi}(B),
\end{equation}

\noindent where ``$\cdot$'' is the product of the group. A unital coloration can be represented as a monomial with exponents on $G$. The elements of $\Dowl(\Dowl)[V]$ are then identified with  factored monomials. This notation provides a better insight on the structure of the operad.
\begin{eqnarray}
(V,f)&\equiv&\prod_{v\in V}v^{f(v)}\\
(\{f_B\}_{B\in\pi}, h_{\pi})&\equiv& \prod_{B\in \pi}\left(\prod_{b\in B}b^{f_B(b)}\right)^{h_{\pi}(B)} \\
\eta:\prod_{B\in \pi}\left(\prod_{b\in B}b^{f_B(b)}\right)^{h_{\pi}(B)} &\mapsto&\prod_{B\in \pi}\prod_{b\in B}b^{f_B(b)\cdot h_{\pi}(B)}
\end{eqnarray}
For example, for the multiplicative group of non-zero integers module $5$, $G=\mathbb{Z}_5^*$, and $V=\{a,b,c,d,e,f,g,h\}$ we have:
\begin{equation}\label{factormonomial}\eta: (a^1 b^2 d^2)^1(c^1g^3f^4)^2(e^1h^3)^3\mapsto a^1b^2d^2c^2g^1f^3e^3h^4.\end{equation}
In  Dowling's original setting of lattices associated to a finite group, he made use of equivalence classes of colorations over partial partitions of a set. If we had followed his approach this would have led us to the definition of an equivalence relation between $G$-colorations, $f,h:V\rightarrow G$, $f\sim h$ if there exists a $g\in G$ such that $f=g\cdot h$. Observe that in each equivalence class of colorations there is only one which is unital. This is the reason why we define the Dowling operad by means of unital colorations. It is the natural way of avoiding complications with equivalence classes, by choosing one simple representative.

Since $G$ satisfies the left cancellation law, and $|E^{G}_+[1]|=1$, we have that $\Dowl$ is a $c$-operad:
$$\eta(\{f_B\}_{B\in\pi},h_{\pi})=\eta(\{f_B\}_{B\in\pi},h'_{\pi})\Rightarrow h_{\pi}=h'_{\pi},$$
we can define a posets $P_{\Dowl}[V]=(E(\Dowl)[V],\leq)$. The elements of the poset are assamblies of unital colorations, i.e., unital factored monomials. We say that $a_1\leq a_2$ if there exists a factored monomial $a_2'$ over the factoras of $a_1$ such  that $\bar{\eta}(a_1, a_2')=a_2$. For example for $G=\mathbb{Z}_5^{\ast}$, and naming $A=a^1b^2$, $B=c^1d^3$,  $C=e^1f^2g$, and $D=hc^2$ and consider the factored monoid 
\begin{equation*}
[A^1B^3][C^1D^2]
\end{equation*}
We have the product 
\begin{eqnarray*}
	\bar{\eta}((a^1b^2)(c^1d^3)(e^1f^2g)(h^1c^2),[A^1B^3][C^1D^2])&=&\eta((a^1b^2)^1(c^1d^3)^3)\eta((e^1f^2g)^1(h^1c^2)^2)\\&=&(a^1b^2c^3d^4)(e^1f^2gh^2c^4)
\end{eqnarray*}
Then, we have
$$(a^1b^2)(c^1d^3)(e^1f^2g)(h^1c^2)\leq (a^1b^2c^3d^4)(e^1f^2gh^2c^4)$$
The poset $P_{\Dowl}[n]$ has a unique minimal element $\wh{0}$, the assembly of trivial colorations over singletons, and $m^{n-1}$ maximal elements (the number of unital $G$ colorations). The exponential generating function of the M\"obius evaluation of $P_{\Dowl}[V]$, $$\mathrm{M}\ddot{\mathrm{o}}\mathrm{b}P_{\Dowl}[n]=\sum_{f\in \Dowl[n]}\mu(\hat{0},f),$$ 
is the substitutional inverse of the generating function $$\Dowl(x)=\frac{e^{mx}-1}{m},$$
\begin{equation}
\mathrm{M}\ddot{\mathrm{o}}\mathrm{b}P_{\Dowl}(x)=\sum_{n=1}^{\infty}\mathrm{M}\ddot{\mathrm{o}}\mathrm{b}P_{\Dowl}[n]\frac{x^n}{n!}=\ln(1+mx)^{\frac{1}{m}}=\sum_{n=1}^{\infty}(-m)^{n-1}(n-1)!\frac{x^n}{n!}.
\end{equation}
The binomial family conjugate to $\Dowl(x)=\frac{e^{mx}-1}{m}$ is the $[m]$-Touchard (see \cite{MeRam}),
$$T^{[m]}_n(x)=\sum_{a\in P_{\Dowl}[n]}x^{|a|}=\sum_{k=1}^nS^{[m]}(n,k)x^k.$$
Their umbral inverses being
$$ \widehat{T}^{[m]}_n(x)=\sum_{a\in P_{\Dowl}[n]}\mu(\widehat{0},a)x^{|a|}=\sum_{k=0}^ns^{[m]}(n,k)x^k=x(x-m)(x-2m)\dots(x-(n-1)m).$$

\section{Monops}\label{sec:monops} At this stage, having studied two particular cases, what is missing is a a general construction of families of posets in order to give a combinatorial interpretation to the umbral inversion for Sheffer families. Or equivalently, to the inverses of Riordan arrays. To this end we define
monoids in the Riordan category $\spe\rtimes\spe_+$. They will be called {\em \textcolor{blue}{monops}}, because they are an interesting mix between a monoidal structure in the first component of the pair, with an operad structure in the second one. 

\begin{defi}\normalfont A \textcolor{blue}{\em monop} is a monoid in the Riordan category $\spe \rtimes\spe_+$. More specifically, an admissible pair of species $(M,\OO)$ is called a monop if it is accompanied with a  product $(\rho,\eta)$, and identity morphisms $(\mathfrak{e},e)$, \begin{eqnarray}(\rho,\eta)&:&(M,\OO)\ast(M,\OO)\rightarrow (M,\OO) \mbox{ (product)}\\ (\mathfrak{e},e)&:&(1,X)\rightarrow (M,\OO)\mbox{ (identity) } 
	\end{eqnarray}
	satisfying the identity and associativity properties of a monoid in the context of the Riordan category  $\spe\rtimes \spe_+.$\end{defi}
We then have four natural transformations
\begin{equation*}
\rho:M\cdot M(\OO)\rightarrow\OO,\;\eta:\OO(\OO)\rightarrow \OO
\end{equation*}
\begin{equation*}
\mathfrak{e}:1\rightarrow M,\; e:X\rightarrow \OO.
\end{equation*}
That suggest, without looking at the commuting diagrams implicit in the definition of $(M,\OO)$, an operad structure on $\OO$,  a monoid structure on $M$, and some extra conditions.
We begin with two definitions in order to formulate those extra conditions.
\begin{defi}Right module over an operad.\\\normalfont
  Let $\OO$ be an operad and $M$ a species. We say the $M$ is a right module over $\OO$ if we have an action $\tau:M(\OO)\rightarrow M$ of $\OO$ over $M$ that is pseudo associative and where the assembly of identities of $\OO$ fixes every structure of $M$
\begin{eqnarray}
& &\tau(\bar{\eta}(a_1,a_2),m_{\pi})=\tau(a_1,\tau(	a_1,m_{\pi})),
\\
& &\tau(\{e_v\}_{v\in V}, m_V)=m_V.
\end{eqnarray}   	
\end{defi}
 For a detailed study of modules over operads and applications see \cite{Fressemodul}.
\begin{defi}Compatibility  condition.\\\normalfont Let $(M,\nu,\frak{e})$ be a monoid which is simultaneously a right module over $\OO$. We say that $\nu$ and $\tau$ are compatible if for every pair $$((a_{V_1},m_{\pi_1}), (a_{V_2},m_{\pi_2}))\in M(\OO)[V_1]\times M(\OO)[V_2]\subseteq M(\OO)\cdot M(\OO)[V],$$ we have
\begin{equation}
\nu(\tau(a_{V_1},m_{\pi_1}), \tau(a_{V_2},m_{\pi_2}))=\tau(a_{V_1}\sqcup a_{V_2},\nu(m_{\pi_1},m_{\pi_2})).
\end{equation}
\end{defi}

 \begin{theo}\label{teo.fundamentalmonops}Fundamental theorem of monops. \normalfont
 	Let $(M,\nu,\mathfrak{e})$ be a monoid,  and $(\OO,\eta,e)$ an operad, $M$ being a right $\OO$-module, $\tau:M(\OO)\rightarrow M$. If $\tau$ and $\nu$ are compatible then the pair $(M,\OO)$, $(M,\OO)=((M,\OO),(\rho,\eta), (\mathfrak{e},e))$, with $$\rho:=\nu\circ (M.\tau)$$ is a monop. Conversely, if $(M,\OO)=((M,\OO),(\rho,\eta), (\mathfrak{e},e))$ is a monop, then $(\OO,\eta,e)$ is an operad, $(M,\nu,\mathfrak{e})$, $\nu=\rho\circ (M.M(e))$  is a monoid with a structure of right $\OO$-module $\tau=\rho\circ(\mathfrak{e}\cdot M(\OO))$ and $\nu$ and $\tau$ are compatible.
 \end{theo}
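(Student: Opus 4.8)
The plan is to prove this as a structural equivalence between a monoid in the Riordan category $\spe\rtimes\spe_+$ and a bundle of data $(\OO,\eta,e)$, $(M,\nu,\mathfrak{e})$, $\tau$ subject to compatibility. The central observation is that the product morphism $(\rho,\eta):(M,\OO)\ast(M,\OO)\to(M,\OO)$ has, by definition of the Riordan product, source $(M\cdot M(\OO),\OO(\OO))$, so it splits into two natural transformations $\rho:M\cdot M(\OO)\to M$ and $\eta:\OO(\OO)\to\OO$; likewise the identity $(\mathfrak{e},e):(1,X)\to(M,\OO)$ splits into $\mathfrak{e}:1\to M$ and $e:X\to\OO$. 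The key point is that the defining associativity and unit diagrams of a monoid in $\spe\rtimes\spe_+$, when expanded componentwise using the formula for $\ast$, decouple: the second-component equations are exactly the operad axioms for $(\OO,\eta,e)$, while the first-component equations entangle $\rho$ with the $\OO$-action. So the proof is essentially a diagram-chase that I would organize in the two directions of the ``if and only if.''

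For the forward direction I would assume the three pieces plus compatibility and verify that $((M,\OO),(\rho,\eta),(\mathfrak{e},e))$ with $\rho:=\nu\circ(M.\tau)$ satisfies the monoid diagrams. First I would check the unit axioms: using $\tau(\{e_v\},m_V)=m_V$ and the operad unit for $\eta$, together with $\nu(\mathfrak{e},m)=m=\nu(m,\mathfrak{e})$, one confirms $(\rho,\eta)\circ((\mathfrak{e},e)\ast\mathrm{id})=\mathrm{id}=(\rho,\eta)\circ(\mathrm{id}\ast(\mathfrak{e},e))$ componentwise. Then the associativity square: in the second component this is precisely operad associativity for $\eta$, which holds by hypothesis. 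In the first component I would trace an element $(m_1,(a,m_2),(\ldots))$ of the triple Riordan product through both composites; expanding $\rho=\nu\circ(M.\tau)$ and using the right-module pseudo-associativity $\tau(\bar\eta(a_1,a_2),m_\pi)=\tau(a_1,\tau(a_2,m_\pi))$ to handle the nested substitutions, together with compatibility of $\nu$ and $\tau$ to commute the monoid product past the action, I expect both sides to reduce to the same expression. The compatibility condition is exactly the identity needed to interchange the ``multiply in $M$'' step with the ``act by $\OO$'' step, so it is the load-bearing hypothesis here.

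For the converse I would start from a monop and recover the data by the stated formulas. Reading off the second components of the monoid diagrams immediately gives that $(\OO,\eta,e)$ is an operad. Then I would define $\nu:=\rho\circ(M.M(e))$ and $\tau:=\rho\circ(\mathfrak{e}\cdot M(\OO))$, interpreting these as the restrictions of $\rho$ obtained by inserting identities in the appropriate slot: $\nu$ specializes the $M(\OO)$ factor to $M(X)\cong M$ via $e$, giving a product $M\cdot M\to M$, while $\tau$ specializes the leading $M$ factor to $1$ via $\mathfrak{e}$, giving an action $M(\OO)\to M$. Associativity of $\nu$, unitality of $\nu$, the module axioms for $\tau$, and the compatibility of $\nu$ with $\tau$ should all fall out of suitable specializations of the single first-component associativity and unit diagrams of $\rho$, by inserting $\mathfrak{e}$ and $e$ in various positions and using functoriality of the product and substitution of species. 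The main subtlety to get right is that one must also check $\rho=\nu\circ(M.\tau)$, i.e.\ that the recovered $\nu$ and $\tau$ reassemble into the original $\rho$; this is the reconstruction identity that closes the equivalence, and I would verify it by factoring a general element $(m,(a,m'))\in M\cdot M(\OO)[V]$ as the image of the composite and checking it agrees with applying $\tau$ then $\nu$.

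I expect the main obstacle to be purely bookkeeping: writing the componentwise expansion of the associativity pentagon (or square) for $\ast$ precisely enough that the roles of $\eta$, $\tau$, $\nu$, and the isomorphisms $\widehat{a}_2\cong a_2$ are unambiguous. In particular one must be careful with the ``decorated'' substitution $\bar\eta$ appearing in the module axiom versus the raw $\eta$, and with the canonical distributivity and associativity isomorphisms of species (the same ones invoked in Eq.~(\ref{eq:asomonop})) that make $M\cdot M(\OO)$, $\OO(\OO)$, and their iterates well-defined; these coherence isomorphisms are what let the diagram chase go through strictly rather than only up to isomorphism. Once the diagrams are set up, each verification is a short computation, so the difficulty is organizational rather than conceptual.
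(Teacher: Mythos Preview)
Your proposal is correct and takes essentially the same approach as the paper: decompose the monop axioms componentwise so that the second component yields the operad axioms for $(\OO,\eta,e)$, while the first-component associativity for $\rho=\nu\circ(M.\tau)$ is reduced to a combination of monoid associativity for $\nu$, module pseudo-associativity for $\tau$, and the compatibility condition; in the converse direction, recover $\nu$ and $\tau$ as the indicated restrictions of $\rho$ and read off their axioms (and compatibility) by specializing the single $\rho$-associativity diagram along $e:X\hookrightarrow\OO$ and $\mathfrak{e}:1\hookrightarrow M$. The only noteworthy difference is presentational: the paper carries out the first-component associativity check by a purely diagrammatic argument---an ``enhanced'' diagram broken into five commuting sub-regions, one of which is exactly the compatibility square---rather than by element-chasing, precisely so that the proof applies verbatim to linear and dg-species.
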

We postpone the proof of the Fundamental Theorem to Section \ref{section.diagrams}. 

\subsection{Examples}
\begin{ex}\label{ex.commonop}\normalfont The pair $(E,E_+)$ is a monop. The Boolean monoid $E$ is a right module over the operad $E_+$. There is a unique homomorphism $\tau_V:E(E_+)[V]\rightarrow E[V]$, $\tau(\pi,\{\pi\})=\biguplus_{B\in\pi}B=V$. It is easy to check that the module and monoid structure are compatible.
\end{ex}
\begin{ex}\normalfont The pair 
	$(\LL,\LL_+)$ is a monop. The module structure $$\tau_V:\LL(\LL_+)[V]\rightarrow \LL[V]$$ is defined as follows. If $V=\emptyset$, $\tau_{\emptyset}$ is trivially defined.  Otherwise we define $\tau$ a concatenation of linear orders as for the operad $\LL_+$. The concatenation product $\nu:\LL.\LL\rightarrow \LL$ is clearly compatible with $\tau$. \end{ex}
\begin{ex}\label{ex.graphsmonop}\normalfont
	Let $\Gr_c$ be the species of connected graphs. It is a $c$-operad with respect to the restriction of the product $\eta$ defined in Ex. \ref{ex.graphs}. The species of graphs is a $c$-monoid with respect to the product $\nu_1$ of Ex. \ref{ex.graphs}. It is also a right $\Gr_c$-module by restricting appropriately the product $\eta$ of Ex. \ref{ex.graphs}, to obtain $\tau:\Gr(\Gr_c)\rightarrow\Gr$. It is easy to check that both structures are compatible. Hence the pair 
	$(\Gr,\Gr_c)$ is a $c$-monop.
	 As a motivating example of the general procedure we will develop in Section \ref{secposetmonop}, we are going to define a partial order over  $\Gr.\,E(\Gr_c)[V]$. An element of $\Gr.\,E(\Gr_c)[V]=\Gr.\Gr$ is a pair $(a_1,a_2)=(g_1,\{g_B\}_{B\in\pi})$ of graphs (an arbitrary graph and an assembly of connected graphs). The first element of the pair is called the monoidal section, and the assembly $a$ the operadic section. We represent the pair $(g_1,\{g_B\}_{B\in\pi})$ by placing a double bar between the monoidal zone $g_1$ and the operadic one, and simple bars between the elements of the operadic zone $\{g_B\}_{B\in\pi}$ (se Fig. \ref{fig.graphmonop}).  
	 We say that   $$(g_1,\{g_B\}_{B\in\pi})\leq(g_2,\{g_C\}_{C\in\sigma})$$
	 if the assembly $\{g_B\}_{B\in\pi}$ can be split in two subassemblies 
	 $$\{g_B\}_{B\in\pi}=\{g_B\}_{B\in\pi^{(1)}}+\{g_B\}_{B\in\pi^{(2)}},$$ such that
	 \begin{enumerate}
	 	\item $g_2=\nu_1(g_1,\tau(\{g_B\}_{B\in\pi^{(1)}},g_{\pi^{(1)}}))$, for some $g_{\pi^{(1)}}$ in $\Gr[\pi^{(1)}]$
	   \item $\{g_C\}_{C\in\sigma}=\eta(\{g_B\}_{B\in\pi^{(2)}},g_{\pi^{(2)}})$ for some $g_{\pi^{(2)}}\in\Gr[\pi^{(2)}]$. Equivalently, $\{g_B\}_{B\in\pi^{(2)}}$ is less than or equal to	$\{g_C\}_{C\in\sigma}$, in the partial order defined by the operad $(\Gr_c,\eta).$
    \end{enumerate}
In other words, a part of the assembly in the operadic zone of the pair is `abducted' to the monoidal zone, and then transformed, by means of $\tau$, in an element of the monoid. Finally it is multiplied, by means of $\nu_1$, with the element that initially was in the monoidal zone. The other part of the assembly in the operadic zone, remains in it and then substituted by a bigger assembly (in the partial order defined by the operad).
\end{ex}
\begin{figure}
	\begin{center}
		\includegraphics[width=130mm]{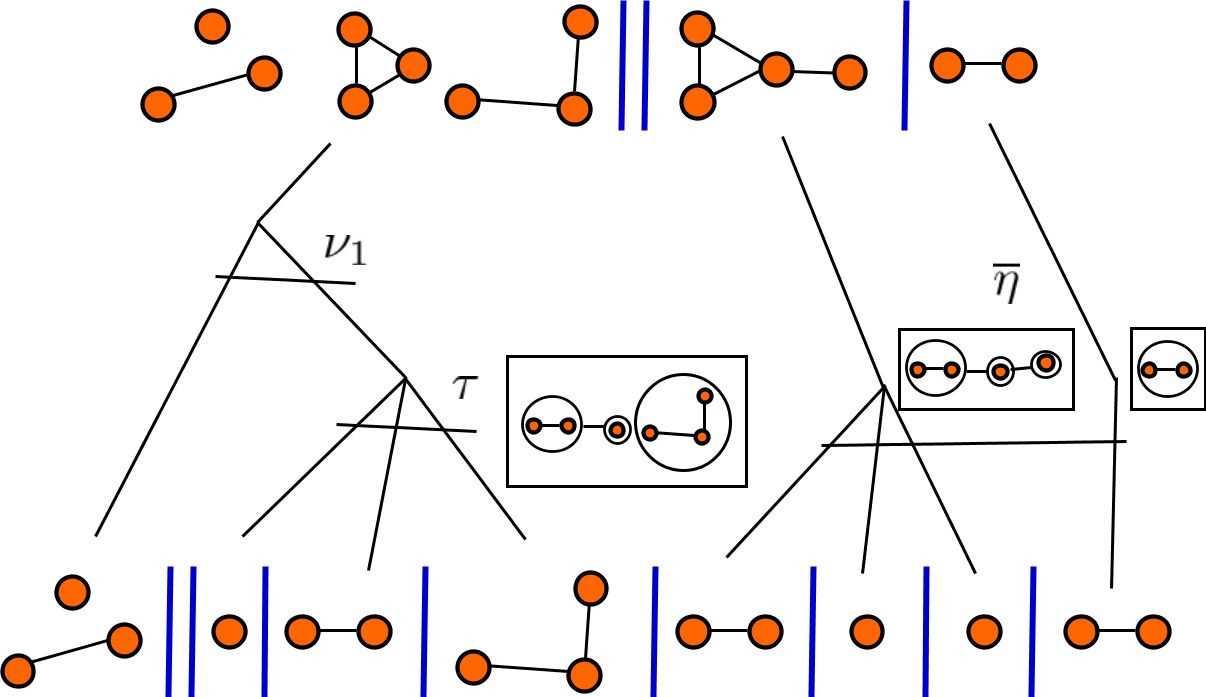}
	\end{center}\caption{Definition of the partial order on $\Gr.E(\Gr_c)[V]$ by means of the monop structure on $(\Gr,\Gr_c).$}\label{fig.graphmonop}
\end{figure}
 We will give a general construction of posets of this kind obtained from $c$-monops. Each of them gives us a Sheffer family and their umbral inverses via M\"obius inversion.  

\subsection{A family of monops: an operad and its derivative}\label{subsection.derivative}
For an operad $\OO$, the Riordan pair $(\OO',\OO)$ is a monop, 
$$(\rho, \eta):(\OO',\OO)\ast(\OO',\OO)=(\OO'.\OO'(\OO),\OO(\OO))\rightarrow (\OO',\OO),$$
where $\rho$ is the derivative of the morphism $\eta:\OO(\OO)\rightarrow \OO$, $\rho:=\eta'$.
In effect, by the chain rule we have that
\begin{equation}
\eta':\OO'.\OO'(\OO)\rightarrow \OO',
\end{equation}
and 
\begin{equation}
e':1\rightarrow \OO'.
\end{equation} 
The pair $(\OO',\OO)$ with the morphisms defined above is a monop (see Theorem \ref{teo.monopderivative} in Section \ref{section.diagrams}  ).  
\begin{ex}\normalfont
	The structure of monop $(E, E_+)$ in Ex. \ref{ex.commonop}  can be defined by the derivative procedure, since $E=E'_+.$
\end{ex}
\begin{ex}\normalfont The pair $(\LL',\LL_+)$, $\LL'=\LL^2$, is a monop.  \end{ex}

\section{Posets associated to $c$-monops}\label{secposetmonop}
\begin{defi}
	A monop $(M,\OO)$ is said to be  a $c$-monop if $\OO$ is a c-operad and $M$ is a c-monoid  and left cancellative as a right $\OO$-module.  
\end{defi}

For $c$-monop we will define a partially ordered set $P_{(M,\OO)}[V]$. 
Recall that the subjacent set of the partially ordered set $P_M[V]$ associated to a $c$-monoid $M$ is equal to $M.E[V]$, and that of $P_{\OO}[V]$, associated to a operad $\OO$ is $E_+(\OO)[V]$. By analogy we take the Riordan product with the monop $(E, E_+)$
\begin{equation}
(M,\OO)\ast (E,E_+)=(M.E(\OO),E_+(\OO))
\end{equation}
We already saw that $E_+(\OO)[V]$ is the set subjacent to the poset $P_{\OO}[V]$. The interesting posets associated to a monop are obtained by appropriately defining  an order over $M.E(\OO)[V]$.
Recall that the elements of $(M.E(\OO))[V]$ are pairs of the form $(m,a)$, where $(m,a)\in M[V_1]\times E(\OO)[V_2]$, for some decomposition of $V$ as a disjoint union  $V=V_1+V_2$.
Before defining it we require the following definition of product.
\begin{defi}\normalfont Let $(m_1,a_1)$ be an element of $(M.E(\OO))[V]$. Let $\pi$ be the partition subjacent to the assembly $a$. Let $(m_2',a_2')$ be an element of $M[\pi_1]\times E(\OO)[\pi_2]\subseteq M.E(\OO)[\pi]$, $\pi=\pi_1+\pi_2$ a splitting of $\pi$. Observe that either $\pi_1$ or $\pi_2$ may be empty. We define the product
	\begin{eqnarray}
	\nonumber\hro((m_1,a_1),(m_2',a_2'))&:=&(\nu(m_1,\tau(a_1^{(1)},m_2')), \etb(a_1^{(2)},a_2'))\\&=&(\rho(m_1,(a_1^{(1)},m_2')),\etb(a_1^{(2)},a_2')),
	\end{eqnarray}
	 where $a_1^{(i)}$ is the subassembly of $a_1$ having $\pi_i$ as subjacent partition, $i=1,2$. 
	\end{defi}
Observe that from the identity axioms for operads, monoids, and right $\OO$- modules we have that
\begin{equation}\label{eq.unibarrho}
\bar{\rho}((m,a),(\mathfrak{e},\{e_B\}_{B\in \pi}))=(a,m)=\bar{\rho}((\mathfrak{e},\{e_v\}_{v\in V}),(m,a)),
\end{equation}
$\pi$ being the partition subjacent to $a$.

\begin{theo}\label{teo:asocia}\normalfont The product $\hro$ is associative, left cancellative, and the identity does not have proper divisors. Let   $(m_1,a_1)$, $(m_2,a_2)$ and $(m_3,a_3)$ be a triplet of nested elements of $M.E(\OO)$, 
	\begin{enumerate}
		\item $(m_1,a_1)\in M[V_1]\times E(\OO)[V_2]\subseteq M.E(\OO)[V]$, $V=V_1+V_2$.
		\item $(m_2,a_2)\in M[\pi_1]\times E(\OO)[\pi_2],$ $\pi=\pi_1+\pi_2$ a splitting of $\pi$, the partition subjacent to the assembly $a_1$.
		\item $(m_3,a_3)\in M[\varsigma_1]\times E(\OO)[\varsigma_2]$, $\varsigma=\varsigma_1+\varsigma_2$ a splitting of $\varsigma$, the partition subjacent to the assembly $a_2$.
	\end{enumerate}
	We have 
	\begin{enumerate}
	\item Associativity

	\begin{equation}\label{eq.associativity}
	\hro(\hro((m_1,a_1), (m_2,a_2)),(m_3,a_3))=\hro((m_1,a_1), \hro((m_2,a_2),(m_3,a_3))).\end{equation}
	\item Left cancellation law 
	\begin{equation}
	\hro((m_1,a_1), (m_2,a_2))=\hro((m_1,a_1), (m'_2,a'_2))\Rightarrow (m_2,a_2)=(m'_2,a'_2).
	\end{equation}
		\item The identity does not have proper divisors
	\begin{equation}\label{eq.nondiv}
	\hro((m_1,a_1),(m_2,a_2))=(\mathfrak{e},\{e_v\}_{v\in V})\Rightarrow m_1=m_2=\mathfrak{e},\; a_1=a_2=\{e_v\}_{v\in V}.
	\end{equation}
	\end{enumerate}
\end{theo}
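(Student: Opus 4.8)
The plan is to establish the three properties separately, always splitting a pair in $M.E(\OO)$ into its monoidal component (living in $M$) and its operadic component (an assembly), and gluing the two analyses with the compatibility axiom of the monop. Throughout I would fix the notation of the statement: write $\pi$ for the partition subjacent to $a_1$, set $W_i=\bigcup\pi_i$ for the unions of the blocks selected by a splitting $\pi=\pi_1+\pi_2$, and write $a_1^{(i)}$ for the subassembly of $a_1$ over $\pi_i$, so that the first component of $\hro((m_1,a_1),(m_2',a_2'))$ lives over $V_1+W_1$ and the second over $W_2$. The governing observation is that $\hro$ is built from $\nu$, $\tau$, and $\etb$ exactly so that these combine via the monop axioms.

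For associativity I would compute both sides of \eqref{eq.associativity} and match them component by component. On the operadic side both iterates reduce, after identifying the relevant subassemblies, to a triple product of the form $\etb(\etb(\,\cdot\,,\cdot),\cdot)=\etb(\,\cdot\,,\etb(\cdot,\cdot))$, which is the associativity of $\etb$, i.e. of the operad product $\eta$ (the same associativity that makes $P_{\OO}$ a poset). On the monoidal side, writing $\pi'=\pi_1+\bigcup\varsigma_1$, the left iterate expands by associativity of $\nu$ to $\nu(m_1,\nu(\tau(a_1^{(1)},m_2),\tau(a_{12}^{(1)},m_3)))$; rewriting the subassembly as $a_{12}^{(1)}=\etb(a_1^{[\varsigma_1]},a_2^{(1)})$ and invoking the pseudo-associativity of the action $\tau$ turns the inner term into $\tau(a_1^{[\varsigma_1]},\tau(a_2^{(1)},m_3))$; finally the compatibility of $\nu$ and $\tau$ collapses the $\nu$ of two $\tau$-images into $\tau(a_1^{(\pi')},\nu(m_2,\tau(a_2^{(1)},m_3)))$, which is precisely the monoidal component of the right iterate. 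Thus associativity is the simultaneous content of operad associativity, module pseudo-associativity, and $\nu$--$\tau$ compatibility.

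For left cancellation I would first recover the splitting from the output: the underlying set of the operadic component of $\hro((m_1,a_1),(m_2,a_2))$ is $W_2$, and since $V_1$ and $\pi$ are fixed by the common left factor, this determines $W_1$, hence $\pi_1$ and $\pi_2$. Equality of two such products therefore forces the two splittings to coincide, after which the operadic components give $\etb(a_1^{(2)},a_2)=\etb(a_1^{(2)},a_2')$, whence $a_2=a_2'$ by left cancellation of the $c$-operad, and the monoidal components give $\nu(m_1,\tau(a_1^{(1)},m_2))=\nu(m_1,\tau(a_1^{(1)},m_2'))$, whence $m_2=m_2'$ by left cancellation of the $c$-monoid followed by left cancellation of the module action (the last hypothesis in the definition of $c$-monop). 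For the no-proper-divisors statement, suppose $\hro((m_1,a_1),(m_2,a_2))=(\mathfrak{e},\{e_v\}_{v\in V})$: the monoidal component lies in $M[\emptyset]$, forcing $V_1=W_1=\emptyset$, $\pi_1=\emptyset$, and $m_1=\mathfrak{e}$ by $|M[\emptyset]|=1$, whence $m_2\in M[\emptyset]$ is also $\mathfrak{e}$ and $a_1^{(2)}=a_1$; the operadic component then reads $\etb(a_1,a_2)=\{e_v\}_{v\in V}$, and since each output block of an $\etb$-product is a union of blocks of $\pi$, every such union must be a singleton, forcing all blocks of $\pi$ and of the partition subjacent to $a_2$ to be singletons, so that the operad identity axiom and $|\OO[1]|=1$ give $a_1=\{e_v\}_{v\in V}$ and $a_2=\{e_B\}_{B\in\pi}$, as required in \eqref{eq.nondiv}.

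I expect the main obstacle to be the bookkeeping in the associativity argument rather than any single axiom. One must verify that the single splitting $\pi=\pi_1+\pi_2$ used when multiplying $1$ and $2$ first corresponds, when multiplying $2$ and $3$ first, to the finer decomposition $\pi=\pi_1+\bigcup\varsigma_1+\bigcup\varsigma_2$ induced by the nested splitting of $\varsigma$, and one must justify the subassembly identities such as $a_{12}^{(1)}=\etb(a_1^{[\varsigma_1]},a_2^{(1)})$ and $a_1^{(1)}\sqcup a_1^{[\varsigma_1]}=a_1^{(\pi')}$ on which the applications of the three axioms rest. Once these identifications are made explicit, each equality becomes a direct invocation of a single structural property, and the remaining cancellation and divisor claims follow by the component-wise reductions above.
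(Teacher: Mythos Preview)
Your proposal is correct and follows essentially the same route as the paper: for associativity you invoke exactly the three structural ingredients the paper uses (associativity of $\nu$ and $\etb$, pseudo-associativity of the module action $\tau$, and the $\nu$--$\tau$ compatibility), and your bookkeeping identification $\pi'=\pi_1\sqcup\{B\in\pi:B\subseteq C\in\varsigma_1\}$ together with $a_{12}^{(1)}=\etb(a_1^{[\varsigma_1]},a_2^{(1)})$ is precisely the paper's $(a_1)_{\sigma_1}=a_1^{(1)}\sqcup a_1^{(2,1)}$ in different notation. For left cancellation and the no-proper-divisors claim you actually supply more detail than the paper, which dispatches both in one sentence; your observation that the underlying set of the operadic output recovers the splitting $\pi_1,\pi_2$ is the key point the paper leaves implicit.
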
	
	
\begin{proof}
	Le us prove associativity. We first introduce some notation. Let $a=\{\omega_B|B\in\sigma\}$ be an assembly with subjacent partition $\sigma$, and let $\sigma_1$ be a subset of $\sigma$. We denote by $(a)_{\sigma_1}$ the subset of $a$,
	$$(a)_{\sigma_1}=\{\omega_B|B\in\sigma_1\}.$$ For another partition $\varphi$, $\varphi\geq\sigma$, and $\varphi_1\subseteq\varphi$, $a_{\subseteq\varphi_1}$ is defined to be the subset of $a$,
	$$a_{\subseteq\varphi_1}=\{\omega_B|B\subseteq C,\; C\in \varphi_1\}.$$
	Computing $\hro((m1,a_1),(m_2,a_2))$ we get
	$$\hro((m_1,a_1),(m_2,a_2))=(\nu(m_1,\tau(a_1^{(1)},m_2)),\etb(a_1^{(2)},a_2)),$$
	where $a_1^{(i)}=(a_i)_{\pi_i}$, $i=1,2$. The assembly $\etb(a_1^{(2)},a_2)$ decomposes as a disjoint union
	$$\etb(a_1^{(2)},a_2)=\etb(a_1^{(2,1)},a_2^{(1)})\sqcup\etb(a_1^{(2,2)},a_2^{(2)}),$$
	where $a_1^{(2,i)}=(a_1^{(2)})_{\subseteq\varsigma_i}$ and $a_2^{(i)}=(a_2)_{\varsigma_i}$, $i=1,2$.
	Hence, 
	\begin{alignat}{2}\label{eq.asoist}
\nonumber	\hro(\hro((m_1,a_1)&, (m_2,a_2)),(m_3,a_3))=\\
	&(\nu\,(\nu\,(m_1,\tau(a_1^{(1)},m_2)),\tau(\etb(a_1^{(2,1)},a^{(1)}_2),m_3)),\etb(\etb(a_1^{(2,2)},a_2^{(2)}),a_3))
	\end{alignat}
	Since $M$ is a right $\OO$-module, we have $\tau(\etb(a_1^{(2,1)},a^{(1)}_2),m_3)=\tau(a_1^{(2,1)},\tau(a^{(1)}_2,m_3)).$  By associativity of $\nu$ and $\etb$, we get from Eq. (\ref{eq.asoist}) 
	\begin{alignat}{2}\nonumber\hro(\hro((m_1,a_1)&, (m_2,a_2)),(m_3,a_3))=\\
	&(\nu\,(m_1,\nu\,(\tau(a_1^{(1)},m_2)),\tau(a_1^{(2,1)},\tau(a^{(1)}_2,m_3)),\etb(a_1^{(2,2)},\etb(a_2^{(2)},a_3)))\end{alignat}
	\noindent From the compatibility between $\eta$ and $\tau$,
	$$\nu\,(\tau(a_1^{(1)},m_2),\tau(a_1^{(2,1)},\tau(a^{(1)}_2,m_3)))=\tau\,(\nu\,(a_1^{(1)}\sqcup a_1^{(2,1)},\nu\,(m_2,\tau(a^{(1)}_2,m_3)))).$$
	Hence
	\begin{alignat}{2}\nonumber\hro(\hro((m_1,a_1)&, (m_2,a_2)),(m_3,a_3))=\\
	&(\nu\,(m_1,\tau\,(\nu\,(a_1^{(1)}\sqcup a_1^{(2,1)},\nu\,(m_2,\tau(a^{(1)}_2,m_3)),\etb(a_1^{(2,2)},\etb(a_2^{(2)},a_3)))\end{alignat}
	
	The right hand side of Eq. (\ref{eq.associativity}) is equal to
	\begin{alignat}{2}\nonumber\hro((m_1,a_1),&\hro( (m_2,a_2),(m_3,a_3)))=\\
	&(\nu\,(m_1,\tau\,(\nu\,((a_1)_{\sigma_1},\nu\,(m_2,\tau(a^{(1)}_2,m_3)),\etb((a_1)_{\sigma_2},\etb(a_2^{(2)},a_3))).\end{alignat}
	 
	\noindent Since the partition $\sigma_1$ is the set of labels of $\nu\,(m_2,\tau(a^{(1)}_2,m_3))$, which is equal to  $$\pi_1\sqcup\{B|B\in\pi,\, B\subseteq C\in\varsigma_1\},$$ we have $(a_1)_{\sigma_1}=(a_1)_{\pi_1}\sqcup (a_1)_{\subseteq \varsigma_1}=a_1^{(1)}\sqcup a_1^{(2,1)}$. In the same way we get that $\sigma_2=\varsigma_2$, and that $(a_1)_{\sigma_2}=a_1^{(2,2)}.$

	 The left cancellation law follows easily from the left cancellation law for $\etb$, $\nu$ and $\tau$. The non existence of proper divisors of the identity is also easy and left to the reader. 
\end{proof}

 The partial order is defined as follows.
\begin{defi}\label{def:partialorder}\normalfont Let $(m_1,a_1), (m_2,a_2)$ be two elements in $M.E(\OO)[V]$. Let $\pi$ be the partition subjacent to $a_1$. We say that $(m_1,a_1)\leq (m_2,a_2)$ if there exists another pair $(m_2',a_2')\in M[\pi_1]\times E(\OO)[\pi_2]$,  $\pi=\pi_1+\pi_2$, such that:
	\begin{equation}
	\hro((m_1,a_1),(m_2',a_2'))=(m_2,a_2).
	\end{equation} 
\noindent Equivalently
\begin{eqnarray}\nonumber m_2&=&\nu(m_1, \tau(a_1^{(1)},m_2'))\\ a_2&=&\etb(a_1^{(2)},a'_2).
 \end{eqnarray} 
\end{defi}
\begin{prop}\normalfont The relation $(m_1,a_1)\leq (m_2,a_2)$ in Definition \ref{def:partialorder} is a partial order. 
\end{prop}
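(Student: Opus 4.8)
The plan is to recognize the relation $\leq$ as ``right divisibility'' for the product $\hro$ and to obtain the three partial-order axioms as immediate consequences of the properties of $\hro$ proved in Theorem \ref{teo:asocia} (associativity, left cancellation, and the fact that the identity has no proper divisors), together with the identity equation (\ref{eq.unibarrho}). The only genuine subtlety is that $\hro$ is a \emph{partial} operation, defined only on nested pairs, so at every composition I must check that the factors are supported over the correct subjacent partition for the next product to be defined.

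For reflexivity I would take as divisor the local identity $(\mathfrak{e},\{e_B\}_{B\in\pi})\in M[\emptyset]\times E(\OO)[\pi]$, where $\pi$ is the partition subjacent to $a_1$; this corresponds to the trivial splitting $\pi=\emptyset+\pi$. Equation (\ref{eq.unibarrho}) then gives $\hro((m_1,a_1),(\mathfrak{e},\{e_B\}_{B\in\pi}))=(m_1,a_1)$, so $(m_1,a_1)\leq(m_1,a_1)$. For transitivity, suppose $(m_1,a_1)\leq(m_2,a_2)$ via a divisor $u=(m_2',a_2')$ and $(m_2,a_2)\leq(m_3,a_3)$ via $v=(m_3',a_3')$. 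Here $u$ lies over a splitting of the partition subjacent to $a_1$, while $v$ lies over a splitting of the partition subjacent to $a_2$. The key observation is that $a_2=\etb(a_1^{(2)},a_2')$ has subjacent partition canonically in bijection with the partition subjacent to $a_2'$ (each of its blocks is the $\eta$-union of the blocks of $\pi$ grouped by one block of $a_2'$). Under this identification the triple $(m_1,a_1),u,v$ is exactly a nested triple in the sense of Theorem \ref{teo:asocia}, so associativity applies:
\begin{equation*}
\hro\bigl((m_1,a_1),\hro(u,v)\bigr)=\hro\bigl(\hro((m_1,a_1),u),v\bigr)=\hro((m_2,a_2),v)=(m_3,a_3).
\end{equation*}
It then remains to verify that $\hro(u,v)$ is a legitimate divisor of $(m_1,a_1)$, i.e. that it lies in $M[\rho_1]\times E(\OO)[\rho_2]$ for a splitting $\rho_1+\rho_2$ of the partition subjacent to $a_1$; this is exactly the label bookkeeping already carried out inside the proof of Theorem \ref{teo:asocia} (the computation identifying the parts $\sigma_1$ and $\sigma_2$). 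Hence $(m_1,a_1)\leq(m_3,a_3)$.

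For antisymmetry, suppose $(m_1,a_1)\leq(m_2,a_2)$ via $u$ and $(m_2,a_2)\leq(m_1,a_1)$ via $v$. Composing as in transitivity and comparing with reflexivity yields
$\hro((m_1,a_1),\hro(u,v))=(m_1,a_1)=\hro((m_1,a_1),(\mathfrak{e},\{e_B\}_{B\in\pi}))$,
so the left cancellation law of Theorem \ref{teo:asocia} forces $\hro(u,v)=(\mathfrak{e},\{e_B\}_{B\in\pi})$. Since the identity has no proper divisors (Eq. (\ref{eq.nondiv}), applied at the level of the ground set $\pi$), both $u$ and $v$ must equal the local identity, and then $(m_2,a_2)=\hro((m_1,a_1),u)=(m_1,a_1)$ by (\ref{eq.unibarrho}).

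The main obstacle I anticipate is not any of the three axioms in isolation — each is a one-line consequence of associativity, cancellation, and the identity property — but rather the partiality of $\hro$: at every composition one must confirm that the factors are nested over the right partitions, in particular the canonical bijection between the partitions subjacent to $a_2$ and to $a_2'$, and the fact that $\hro(u,v)$ is supported over a genuine splitting of the partition subjacent to $a_1$. This is precisely the bookkeeping already settled in the proof of Theorem \ref{teo:asocia}, so I would invoke that computation rather than reprove it.
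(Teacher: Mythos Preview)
Your proposal is correct and follows exactly the same strategy as the paper's proof: reflexivity from Eq.~(\ref{eq.unibarrho}), transitivity from the associativity of $\hro$ in Theorem~\ref{teo:asocia}, and antisymmetry from the left cancellation law together with Eq.~(\ref{eq.nondiv}). Your version is simply more detailed, making explicit the nesting and label-bookkeeping that the paper leaves implicit.
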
	
	\begin{proof}
		By Eq. (\ref{eq.unibarrho}) reflexivity follows. Transitivity follows from associativity of $\hro$ in Theorem \ref{teo:asocia}. Antisymmetry follows from the left canellation law and the non-divisibility of the identity (Eq. (\ref{eq.nondiv}))
		\end{proof}
	
 \begin{prop}\label{propertiesposetmonop}\normalfont The family of posets $\{P_{(M,\OO)}[V]|V\mbox{ a finite
 			set}\}$ satisfies the following properties:
 		\begin{enumerate}
 			\item $P_{(M,\OO)}[V]$ has a $\hat{0}$ equal the pair $(\mathfrak{e},a_0)$, $\mathfrak{e}$ the unique element of
 			$M[\emptyset]$, and $a_0=\{e_v\}_{v\in V}$ the unique assembly of $E(\OO)[V]$ formed by  singleton structures of $\OO$. Its elements of the form $(m,\emptyset)$, $m\in M[V]$ are maximal. 
 			\item If $f:V\rightarrow U$ is a bijection, $P_{(M,\OO)}[f]:P_{(M,\OO)}[V]\rightarrow P_{(M,\OO)}[U]$ is an order
 			isomorphism. %For $[\hat{0},(m,a)]$, $(m,a)\in P_{(M,\OO)}[V]$,  an
 			%interval of $P_{(M,\OO)}[V]$,  the restriction of $P_{(M,\OO)}[f]$ to
 			%$[\hat{0},(m,a)]$, $\Pom_M[f]|_{[\hat{0},m]}:[\hat{0},m]\rightarrow
 			%[\hat{0},M[f]m]$ is an isomorphism of posets.
 			
 			\item For  $(m_1,a_1)$ an element of $P_{(M,\OO)}[V]$,
 			the order coideal
 			$$\mathscr{C}_{(m_1,a_1)}=\{(m_2,a_2)\in P_{(M,\OO)}[V]|(m_2,a_2)\geq (m_1,a_1)\},$$ is isomorphic to
 			$P_{(M,\OO)}[\pi]$, $\pi$ being the partition subjacent to $a_1$. 
 			\item Every interval $[(m_1,a_1),(m_2,a_2)]$  of $P_{(M,\OO)}[V]$  is isomorphic to the
 			interval $[\hat{0},(m_2',a_2')]$ of $P_{(M,\OO)}[\pi]$, $(m_2',a_2')$ being the
 			unique element of $M.E(\OO)[\pi]$ such that $$\hro((m_1,a_1),(m_2',a_2'))=(m_2,a_2).$$
 			\item The interval $[\wh{0}, (m,a)]$ of $P_{(M,\OO)}[V]$, $(m,a)=(m,\{\omega_B\}_{B\in\pi})\in M[V_1]\times E(\OO)[V_2]$ is isomorphic to the product
 			\begin{equation*}
 			[\hat{0},(m,\emptyset)]_{V_1}\times \prod_{B\in \pi}[\hat{0},\{\omega_B\}]_{B}
 			\end{equation*}
 			\end{enumerate}
 		\end{prop}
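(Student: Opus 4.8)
The plan is to derive all five statements from the three structural properties of the product $\hro$ proved in Theorem \ref{teo:asocia} (associativity, left cancellation, and non-divisibility of the identity) together with the identity relation Eq. (\ref{eq.unibarrho}); statements (1) and (2) are immediate. For (1), the right-hand equality in Eq. (\ref{eq.unibarrho}) reads $\hro((\mathfrak{e},\{e_v\}_{v\in V}),(m,a))=(m,a)$, and choosing the splitting of the discrete partition subjacent to $a_0=\{e_v\}_{v\in V}$ induced by $V=V_1+V_2$ exhibits $(m,a)$ itself as a witness, so $(\mathfrak{e},a_0)\le (m,a)$ for every $(m,a)$ and hence $(\mathfrak{e},a_0)=\hat{0}$. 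When $a=\emptyset$ the only admissible witness is $(\mathfrak{e},\emptyset)$, since the subjacent partition is empty and $|M[\emptyset]|=1$; the left-hand equality of Eq. (\ref{eq.unibarrho}) then gives $\hro((m,\emptyset),(\mathfrak{e},\emptyset))=(m,\emptyset)$, so nothing lies strictly above $(m,\emptyset)$ and it is maximal. For (2), every structure map ($\nu,\tau,\eta,\etb$, hence $\hro$) is a natural transformation of species, so transporting a witnessing identity along $f$ shows that $P_{(M,\OO)}[f]$ and its inverse $P_{(M,\OO)}[f^{-1}]$ both preserve the order of Definition \ref{def:partialorder}.

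The heart of the argument is (3). I would fix $(m_1,a_1)$ with subjacent partition $\pi$ and consider the map
$$\Phi\colon P_{(M,\OO)}[\pi]\to \mathscr{C}_{(m_1,a_1)},\qquad (m_2',a_2')\mapsto \hro((m_1,a_1),(m_2',a_2')).$$
By Definition \ref{def:partialorder} the image of $\Phi$ is exactly the coideal, so $\Phi$ is surjective, while left cancellation makes it injective. It remains to show $\Phi$ is an order isomorphism, and this is precisely where associativity is used: if $(m_2',a_2')\le(m_3',a_3')$ with witness $w$, then associativity rewrites $\hro((m_1,a_1),(m_3',a_3'))$ as $\hro(\Phi(m_2',a_2'),w)$, giving $\Phi(m_2',a_2')\le\Phi(m_3',a_3')$; conversely, from $\hro(\Phi(m_2',a_2'),w)=\Phi(m_3',a_3')$, associativity and then left cancellation strip off $(m_1,a_1)$ and return $\hro((m_2',a_2'),w)=(m_3',a_3')$. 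Part (4) follows at once: $\Phi$ sends $\hat{0}=(\mathfrak{e},\{e_B\}_{B\in\pi})$ to $(m_1,a_1)$ by Eq. (\ref{eq.unibarrho}) and the unique $(m_2',a_2')$ with $\hro((m_1,a_1),(m_2',a_2'))=(m_2,a_2)$ to $(m_2,a_2)$, so restricting the isomorphism $\Phi$ to $[\hat{0},(m_2',a_2')]$ yields the claimed isomorphism onto $[(m_1,a_1),(m_2,a_2)]$.

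For (5) I would build explicit mutually inverse maps using the disjointness of the supports $V_1$ and the blocks $B\in\pi$ of $(m,a)=(m,\{\omega_B\}_{B\in\pi})$. The key observation is that if $(m',a')\le(m,a)$ then every block of $a'$ is supported either inside $V_1$ or inside a single block $B$, with no block straddling the partition $V=V_1+\sum_B B$: in Definition \ref{def:partialorder} the witness acts through $\nu,\tau$ on the part of $a'$ mapping into $V_1$ and through $\etb$ on the part mapping into $V_2$, and since $\etb$ only merges blocks, those latter blocks refine $\pi$. This canonical splitting $a'=a'_{V_1}\sqcup\bigsqcup_{B\in\pi}a'_B$ lets me define the forward map
$$(m',a')\longmapsto\Big((m',a'_{V_1}),\,\{(\mathfrak{e},a'_B)\}_{B\in\pi}\Big)\in[\hat{0},(m,\emptyset)]_{V_1}\times\prod_{B\in\pi}[\hat{0},\{\omega_B\}]_{B},$$
with inverse the gluing $((m',c),\{(\mathfrak{e},d_B)\}_B)\mapsto(m',c\sqcup\bigsqcup_B d_B)$; that the components land in the stated factors comes from reading Definition \ref{def:partialorder} on each support separately, and that the maps are mutually inverse is clear. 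The main obstacle, and the step I would spend the most care on, is verifying that both maps preserve order: one must show that a global witness $(m'',a'')$ for $(m',a')\le(m,a)$ decomposes into a monoidal-zone witness together with one witness per block, and conversely that independently chosen witnesses on the factors reassemble into a single global witness. This is exactly the content of the compatibility condition between $\nu$ and $\tau$, which lets the $\tau$-contributions over disjoint supports be collected under a single $\nu$, together with the fact that $\etb$ respects disjoint unions of assemblies; once these two bookkeeping facts are secured, the equivalence of the two order relations is a direct, if notationally heavy, comparison.
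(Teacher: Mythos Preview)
Your proposal is correct and follows essentially the same approach as the paper: parts (1)--(4) are argued exactly as the paper does (via Eq.~(\ref{eq.unibarrho}), equivariance of $\hro$, and the map $\Phi(m_2',a_2')=\hro((m_1,a_1),(m_2',a_2'))$ together with associativity and left cancellation), and for part (5) both you and the paper build the isomorphism by splitting and gluing assemblies along the support decomposition $V=V_1+\sum_{B\in\pi}B$. The only cosmetic difference is that the paper first collapses $\prod_{B\in\pi}[\hat{0},\{\omega_B\}]_B$ to $[\hat{0},a]_{V_2}$ and then defines the gluing map $\psi((m_1,a_1),a_2)=(m_1,a_1\sqcup a_2)$, whereas you work directly with the full product; the underlying verification via compatibility of $\nu$ and $\tau$ is the same.
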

 	\begin{proof}
 		Property 1 follows directly from Eq. (\ref{eq.unibarrho}). Property 2 from the equivariance of $\hro$. To prove Property 3, choose an arbitrary element $(m'_2,a'_2)$ in $P_{(M,\OO)}[\pi]$ and define $\phi((m'_2,a'_2)):=\hro((m_1,a_1),(m'_2,a'_2)).$ By the definition of the partial order, associativity,  and the left cancellation  law $\phi$ is an isomorphism. Property 4 is obtained in the same way by restricting $\phi$ to the interval $[\hat{0},(m_2',a_2')]$. To prove Property 5, first observe that the product $\prod_{B\in \pi}[\hat{0},\{\omega_B\}]_{B}$ is isomorphic to the interval $[\widehat{0},a]$, $a=\{\omega_B\}_{B\in\pi}$. Hence, we have to prove that the interval $[\widehat{0},(m,a)]$ is isomorphic to the product $[\widehat{0},(m,\emptyset)]_{V_1}\times[\widehat{0},a]_{V_2}$. For an arbitrary element, $((m_1,a_1),a_2)\in [\widehat{0},(m,\emptyset)]_{V_1}\times[\widehat{0},a]_{V_2}$,   $(m_1,a_1)\leq (m,\emptyset)$, and $a_2\leq a$. It means that $m=\nu(m_1,\tau(a_1,m_2))$, and that $)\etb(a_2,a')=a$
 		for some $m_2$ and some $a'$. Define $\psi((m_1,a_1),a_2)):= (m_1, a_1\sqcup a_2)\in [\widehat{0},(m,a)]$. It is easy to prove that $\psi$ is an isomorphism.
 	\end{proof}
 	Let $A$ be a subset of a poset $P_{(M,\OO)}[n]$. We define the M\"obius cardinal of $A$ as the sum 
 	$$|A|_{\mu}=\sum_{(m,a)\in A}\mu(\widehat{0},(m,a)).$$
 	\begin{theo}\label{teo.inversematrix}
 			 	Let $(M,\OO)$ be a $c$-monop. Then, the Riordan matrices $C_{n,k}=|M\cdot \gamma_k(\OO)[n]|$ and $\widehat{C}_{n,k}=|M\cdot \gamma_k(\OO)[n]|_{\mu}$, are one inverse of the other. Equivalently,  they are associated respectively to the Riordan pairs $(M(x),\OO(x))$ and $(M(x),\OO(x))^{-1}$.
 			 \end{theo}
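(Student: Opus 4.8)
The plan is to prove the stronger, purely matricial statement $\wh C\cdot C=\mathrm{Id}$, and to deduce everything else from it. First I would record that both matrices are lower triangular with unit diagonal: indeed $C_{n,n}=\wh C_{n,n}=1$, because the only pair $(m,a)\in M\cdot\gamma_n(\OO)[n]$ is the bottom element $\wh 0=(\mathfrak e,\{e_v\}_{v\in[n]})$ (here one uses $|M[\emptyset]|=1$ and $|\OO[1]|=1$), and $\mu(\wh 0,\wh 0)=1$. For such matrices a one-sided inverse is automatically two-sided, so it suffices to check $\sum_{j}\wh C_{n,j}\,C_{j,k}=\delta_{n,k}$. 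Since $C$ is by definition the Riordan matrix of the pair $(M(x),\OO(x))$, this identifies $\wh C=C^{-1}$ as the matrix of $(M(x),\OO(x))^{-1}$ through Eq.~(\ref{inversematrix}), giving the ``equivalently'' clause.

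The key structural input is Property 3 of Proposition~\ref{propertiesposetmonop}. Fix an element $x=(m,a)$ of $P_{(M,\OO)}[n]$ and let $\pi$ be the partition subjacent to $a$, so the operadic level of $x$ is $|a|=|\pi|$. The coideal $\mathscr C_x$ of elements $\ge x$ is isomorphic to $P_{(M,\OO)}[\pi]$ via $\phi(m_2',a_2')=\hro((m,a),(m_2',a_2'))$. I would first observe that $\phi$ preserves the operadic level: the operadic part of $\phi(m_2',a_2')$ is $\etb(a^{(2)},a_2')$, which by the very definition of $\etb$ has exactly $|a_2'|$ components. Combining this with the bijection-invariance of Property 2 (so $P_{(M,\OO)}[\pi]\cong P_{(M,\OO)}[j]$ for $|\pi|=j$), the number of $y\ge x$ of operadic level exactly $k$ equals the number of elements of $P_{(M,\OO)}[j]$ at level $k$, which is precisely $|M\cdot\gamma_k(\OO)[j]|=C_{j,k}$.

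With this reading the matrix product collapses by exchanging the order of summation. Grouping the elements $x=(m_x,a_x)$ of $P_{(M,\OO)}[n]$ by their level $j$ and using the interpretation of $C_{j,k}$ just established,
\begin{align*}
\sum_{j}\wh C_{n,j}\,C_{j,k}
&=\sum_{j}\ \sum_{\substack{x=(m_x,a_x)\\ |a_x|=j}}\mu(\wh 0,x)\,\#\{\,y\ge x : |a_y|=k\,\}\\
&=\sum_{\substack{y=(m_y,a_y)\\ |a_y|=k}}\ \sum_{\wh 0\le x\le y}\mu(\wh 0,x)
=\sum_{\substack{y=(m_y,a_y)\\ |a_y|=k}}\delta_{\wh 0,y},
\end{align*}
the last equality being the defining M\"obius recurrence $\sum_{\wh 0\le x\le y}\mu(\wh 0,x)=\delta_{\wh 0,y}$. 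Only the term $y=\wh 0$ survives, and its operadic level is $n$; hence the sum equals $\delta_{n,k}$, as required.

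The routine points I would leave implicit are the triangularity/unit-diagonal bookkeeping and the level-indexed counting bijection. The one genuinely load-bearing step, and the thing to get exactly right, is the level-preservation of the coideal isomorphism $\phi$: this is what turns the abstract M\"obius inversion over $P_{(M,\OO)}[n]$ into the concrete statement about the Riordan entries $C_{j,k}$, and it is exactly where the monop structure — the compatibility of the module action $\tau$ with the operad product $\eta$, packaged into $\hro$ and hence into Property 3 — enters the argument.
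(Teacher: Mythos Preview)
Your argument is correct and follows essentially the same route as the paper: M\"obius inversion over $P_{(M,\OO)}[n]$ combined with the coideal isomorphism of Proposition~\ref{propertiesposetmonop} and the observation that $\hro$ preserves the operadic level $|a|$. The only cosmetic difference is the side of the identity you establish: you prove $\wh C\,C=\mathrm{Id}$ via the recursion $\sum_{\wh 0\le x\le y}\mu(\wh 0,x)=\delta_{\wh 0,y}$ and use Property~3 purely for counting, whereas the paper proves $C\,\wh C=\mathrm{Id}$ via $\sum_{\wh 0\le x\le y}\mu(x,y)=\delta_{\wh 0,y}$ and invokes Property~4 to transport $\mu((m_1,a_1),(m_2,a_2))$ to $\mu(\wh 0,(m_2',a_2'))$ in $P_{(M,\OO)}[\pi]$. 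Your variant is marginally cleaner in that it avoids Property~4, and your explicit remark on triangularity with unit diagonal (so that a one-sided inverse is two-sided) makes the ``equivalently'' clause rigorous; the load-bearing step---level preservation of $\phi$---is exactly the fact the paper records at the end of its proof.
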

 		 \begin{proof}
 		 	Le us consider the poset $P_{(M,\OO)}[n]$. By properties of the M\"obius function we have
 		 	\begin{equation*}
 		 	\sum_{\widehat{0}\leq (m_1,a_1)\leq (m_2,a_2)}\mu((m_1,a_1),(m_2,a_2))=\delta_{n,j}
 		 	\end{equation*}
 		 	\noindent where $(m_2,a_2)$ is any element of $P_{(M,\OO)}[n]$ such that $|a_2|=j\leq n$. Adding over all such elements in $P_{(M,\OO)}[n]$, we get
 		 	\begin{alignat}{4}\nonumber\delta_{n,j}=\sum_{(m_2,a_2)\in M\cdot\gamma_j(\OO)[n]}\;&\sum_{\widehat{0}\leq (m_1,a_1)\leq (m_2,a_2)}\mu((m_1,a_1),(m_2,a_2))=\nonumber\\\nonumber\sum_{(m_2,a_2)\in M\cdot\gamma_j(\OO)[n]}&\;\sum_{j\leq k\leq n}\;\sum_{\widehat{0}\leq (m_1,a_1)\leq (m_2,a_2),\; |a_1|=k}\mu((m_1,a_1),(m_2,a_2))=\\\nonumber\sum_{j\leq k\leq n}\;\sum_{(m_1,a_1)\in M\cdot\gamma_k(\OO)[n]} &\;\sum_{(m_2,a_2)\in M\cdot\gamma_j(\OO)[n]}\;\sum_{(m_2,a_2)\geq(m_1,a_1) }\mu((m_1,a_1),(m_2,a_2))=\\\nonumber\sum_{j\leq k\leq n}\;\sum_{(m_1,a_1)\in M\cdot\gamma_k(\OO)[n]} &\;\sum_{(m'_2,a'_2)\in M\cdot\gamma_j(\OO)[k]}\mu(\widehat{0},(m'_2,a'_2))=\sum_{j\leq k\leq n}\;| M\cdot\gamma_k(\OO)[n]| & |M\cdot\gamma_j(\OO)[k]|_{\mu}.
 		 	\end{alignat}
 		 	The last equation follows from Proposition \ref{propertiesposetmonop} (properties  3 and 4), and the fact that if $\hro((m_1,a_1),(m'_2,a'_2))=(m_2,a_2),$ then $|a'_2|=|a_2|$.
 		 \end{proof}
 	\subsection{Examples}

 	\begin{ex}\normalfont{\em Actuarial polynomials.}
 		 Actuarial polynomials $a^{[\beta]}(x)$ are associated to $$((1-x)^{-\beta}, \ln(1-x)).$$
 		 
 	For $\beta=r$, a positive integer, we get that the Sheffer conjugate to $(E^r(x), E_+(x))=(e^{rx},e^x-1)$, are associated to $((1-x)^{-r},\ln(1+x)).$ Hence, since the Touchard polynomials $T_n(x)$ are associated to $\ln(1+x)$, from Eq. (\ref{eq:shefferbinomial}) the actuarial polynomials evaluated in $-x$ is equal to 
 	$$(1+D)^{r}T_n(x)=\sum_{k=1}^r\binom{r}{k}T^{(k)}(x).$$
 	The pair $(E^r,E_+)$ is a $c$-monop, $E^r$ being the ballot monoid in Ex. \ref{ex.Boolean}, and $E_+$ the commutative operad of \ref{ex.Touchard}. The action of $\tau:E^r(E_+)\rightarrow E_+$ given by $$\tau(\pi, (\pi_1,\pi_2,\dots,\pi_r))=(\cup_{B\in\pi_1}B,\cup_{B\in\pi_2}B,\dots, \cup_{B\in\pi_r}B),$$ 
 	$(\pi_1,\pi_2,\dots,\pi_r)$ being an $r$-composition of $\pi$.
 	
 	The elements of the partially ordered set $P_{(E^r,E_+)}[V]$ are pairs $(\mathbf{W}, \pi),$ where $\mathbf
 	{W}=(W_1,W_2,\dots,W_r)$ is a $r$-composition of some subset $V_1$ of $V$, and $\pi$ is a partition of its complement in $V$. The partial order $P_{(E^r,E_+)}[V]$ is better described by the covering relation.  We say that $(\mathbf{W},\pi)\prec (\mathbf{W}',\pi')$ if either,
 	\begin{enumerate}
 		\item There exist a block $B$ of $\pi$ and some $1\leq i\leq r$, such that $$\mathbf{W}'=(W_1,W_2,\dots,W_i+B,\dots,W_r),$$ and $$\pi'=\pi-\{B\}.$$
 		\item The partition $\pi'$ covers $\pi$ in the refinement order, and $\mathbf{W}'=\mathbf{W}$. 
 	\end{enumerate}
 	See Fig.\ref{fig.actuarial} 
 \begin{figure}
 	\begin{center}
 		\includegraphics[width=160mm]{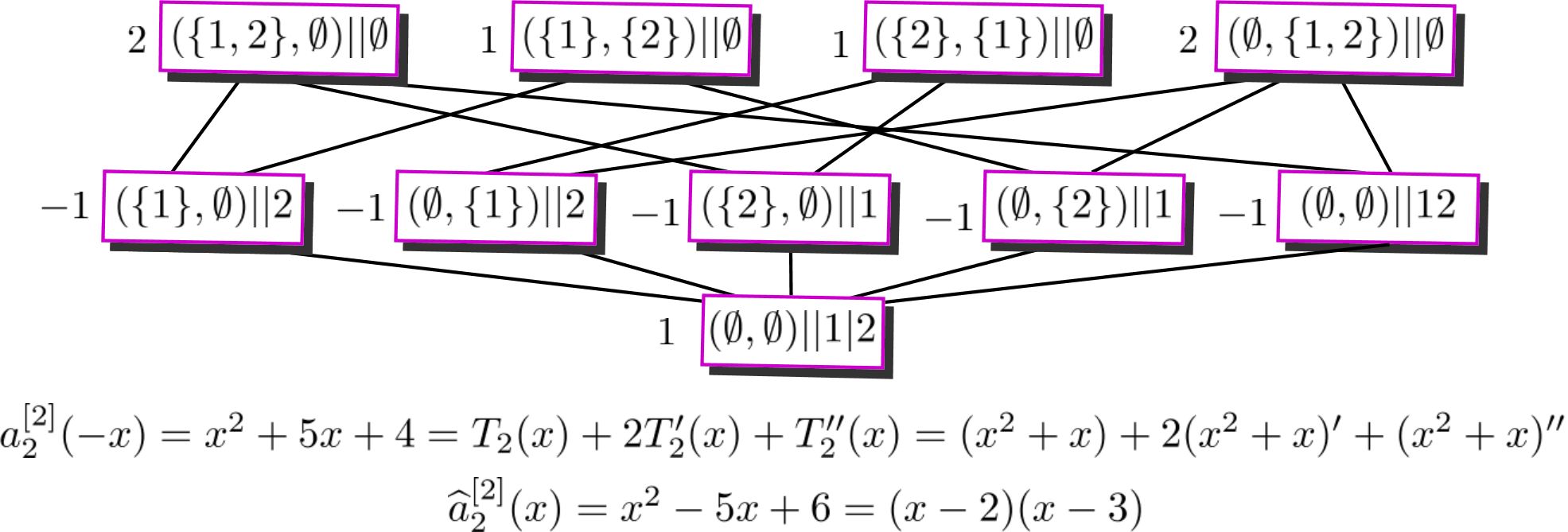}
 	\end{center}\caption{The poset $P_{(E^2,E_+)}[2]$, actuarial polynomial $a_2^{[2]}(x)$, and its umbral inverse $\widehat{a}_2^{[2]}(x).$}\label{fig.actuarial}
 \end{figure}
 	
 	 Their umbral inverses are the falling factorials $$(x-r)(x-r-1)\dots(x-r-n+1)=\sum_{\mathbf{W}||\pi\in P_{(E^2,E_+)}[n]}\mu(\widehat{0},\mathbf{W}||\pi) x^{|\pi|}.$$ 
 
 		\end{ex}
 \begin{ex}\normalfont {\em Laguerre polynomials $L^{[\alpha]}_n(x)$}. The Laguerre polynomials are Sheffer associated to  $(\frac{1}{(1-x)^{\alpha+1}},\frac{x}{x-1})$,$$L^{[\alpha]}(x)=\sum_{k=0}^{n}\binom{n+\alpha}{n-k}\frac{n!}{k!}(-x)^k.$$ For $r=\alpha+1$, a nonnegative integer,
 	$$L_n^{[r-1]}(x)=\sum_{k=0}^{n}\binom{n+r-1}{n-k}\frac{n!}{k!}(-x)^k=\sum_{k=0}^{n}\binom{n+r-1}{k+r-1}\frac{n!}{k!}(-x)^k.$$
 	Let us consider the pair $(\LL^r,\LL_+)$. $\LL^r$ is the $r$-power of the monoid of lists, Ex. \ref{ex.monoidlist}, and $\LL_+$ the operad of non-empty lists (the associative operad Ex. \ref{ex.operadlists}). It is a $c$-monop, $\LL^r$ a monoid with product $\nu$ the concatenation of r-uples of linear orders. It is also a compatible right $\LL_+$-module  with the action $$\tau(\{l_B\}_{B\in \pi},(l_{\pi_1},l_{\pi_2},\dots,l_{\pi_r}))=(l_1,l_2,\dots,l_r),$$  
 where $l_i$ is given by
 $$l_i=\begin{cases}
 \eta(\{l_B\}_{B\in\pi_i},l_{\pi_i})&\mbox{if $\pi_i\neq\emptyset$}\\\mathfrak{e}&\mbox{otherwise.}
 \end{cases}$$ 
 $\eta$ being the product of the operad $\LL_+$, and $\mathfrak{e}$ the empty order, $\mathfrak{e}\in\LL[\emptyset]$. The elements of the poset $P_{(\LL^r,\LL_+)[n]}$ are pairs of the form $(\mathbf{l},\pi)\in \LL^r.E(\LL_+)[n]$, where $\mathbf{l}=(l_1,l_2,\dots,l_r)$ is an $r$-uple of linear orders and $\pi$ is a linear partition. The numbers of such pairs satisfying $|\pi|=k$ is easily proved to be  $$\binom{n+r-1}{k+r-1}\frac{n!}{k!}.$$ Hence, the Sheffer polynomials obtained by summation over $P_{(\LL^r,\LL_+)}[n]$ are  
 \begin{equation*}
 \widehat{s}_n(x)=\sum_{\mathbf{l},\pi)\in P_{(\LL^r,\LL_+)[n]}}x^{|\pi|}=\sum_{k=1}^{n}\binom{n+r-1}{k+r-1}\frac{n!}{k!}x^k=L^{[r-1]}_n(-x).
 \end{equation*}
 The M\"obius function is equal to
 \begin{equation*}
 \mu(\hat{0},(\mathbf{l},\pi))=(-1)^{n-|\pi|},
 \end{equation*}
 \noindent By M\"obius inversion, their umbral inverse family is equal to
 \begin{equation*}
  s_n(x)=\sum_{(\mathbf{l},\pi)\in P_{(\LL^r,\LL_+)[n]}}(-1)^{n-|\pi|}x^{|\pi|}=(-1)^n\sum_{k=0}^n\binom{n+r-1}{k+r-1}\frac{n!}{k!}(-x)^k=(-1)^nL_n^{[r-1]}(x)
 \end{equation*}
 	\end{ex}
 	\begin{ex}\normalfont {\em Poisson-Charlier polynomials.}
 	\normalfont Consider the species of partitions $\Pi$. It is simultaneously the free commutative monoid generated by $E_+$, Ex. \ref{ex.Bell-appel}, and the free right $E_+$-module generated by $E$; $\Pi=E(E_+)$ (see Ex. \ref{ex.Bell-appel}). As a free right $E_+$ module, the  product is equal to $\tau=E(\eta)$,$$ \tau:E(E_+(E_+))\rightarrow E(E_+).$$ 
 			The monoid structure of $\Pi$ is easily seen to be compatible with this module structure. Hence $(\Pi,E_+)$ is a monop, more specifically, a $c$-monop. Its generating function and that of its inverse are the Riordan pairs
 			\begin{eqnarray*}
 				(\Pi(x),E_+(x))&=&(e^{e^x},e^x-1)\\
 				(\Pi(x),E_+(x))^{-1}&=&(e^{-x}, \ln(1+x)).
 			\end{eqnarray*}
 		
 		The poset $P_{(\Pi,E_+)}[n]$ has as subjacent set $\Pi.E(E_+)[n]=\Pi.\Pi[n]$. The elements of $\Pi.\Pi[V]$ are pairs of partitions $(\pi_1,\pi_2)$, $\pi_i\in\Pi[V_i]$, $i=1,2$, $V_1+V_2=[n]$.\\
 		 Let $(\pi_1, \pi_2)$ and $(\pi_3,\pi_4)$ be two pairs of partitions in $\Pi.\Pi[n]$.
 			We will say that $(\pi_1, \pi_2)\leq (\pi_3,\pi_4)$ if we can split $\pi_2$ in two partitions, $\pi_2=\pi_2^{(1)}+\pi_2^{(2)}$, such that
 			\begin{enumerate}
 				\item $\pi_3=\pi_1\uplus \pi_1'$, $\pi_1'$ being some partition on $V_1$ greater than or equal to $\pi_2^{(1)}$ in the refinement order.
 				\item The partition $\pi_4$ is greater than or equal to $\pi_2^{(2)}$ in the refinement order.
 			\end{enumerate}
 			\noindent See Fig. \ref{fig.charlier} for the poset  $(\Pi.\Pi)[\{1,2,3\}]$. A pair $(\pi_1,\pi_2)$ is represented by placing a double bar between the partitions. 
 			The partial order is better described by the covering relation. We will say that $(\pi_1,\pi_2)$ is covered by $(\pi_3,\pi_4)$ if either:
 			\begin{enumerate}
 				\item There exists a block $B$ in $\pi_2$ such that $\pi_3=\pi_2\uplus\{B\}$ and $\pi_4=\pi_2-\{B\}$.
 				\item The partition $\pi_4$ covers $\pi_2$ in the refinement order of partitions. That is,  $\pi_4$ is obtained by joining exactly two blocks of $\pi_2$.
 			\end{enumerate} 
 	
 		\begin{figure}
 			\begin{center}\includegraphics[width=160mm]{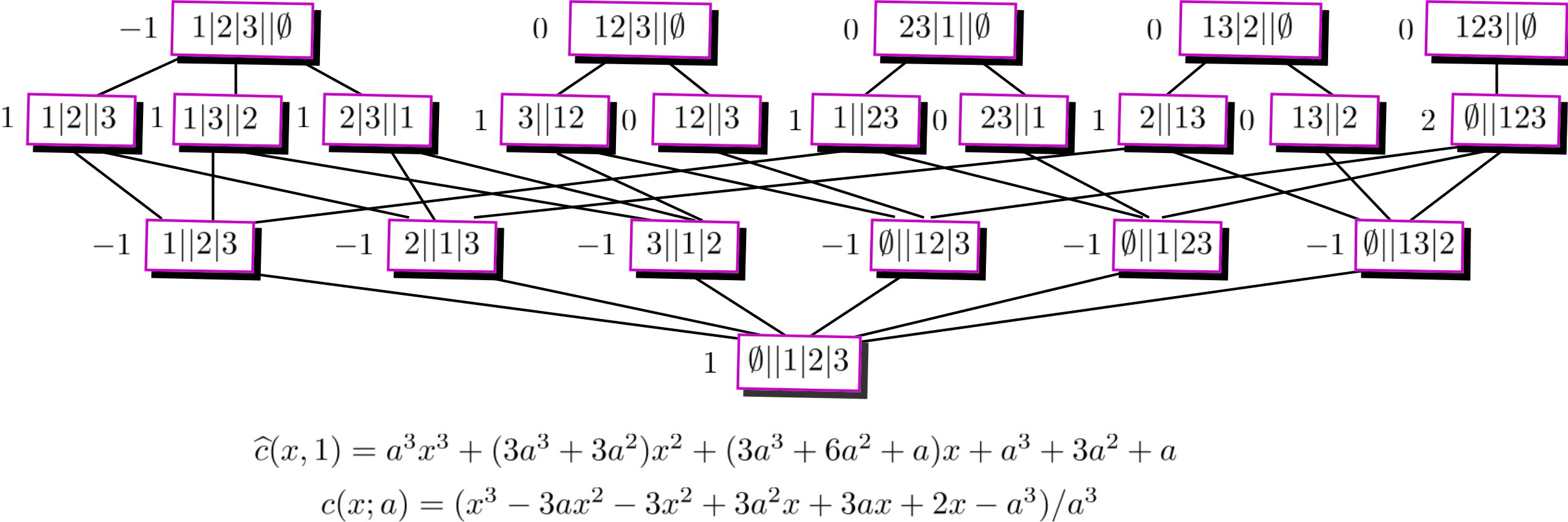}
 			\end{center}\caption{Poset associated to the monop $(\Pi,E_+)$, and Poisson-Charlier polynomials.}\label{fig.charlier}

 		\end{figure}
 		This family of posets gives us the combinatorics of the Poisson-Charlier polynomials and their umbral inverses.  By summation over $P_{(\Pi,E_+)}[n]$ we get the Shifted Touchard polynomials
 		$$T_n(x+1)\sum_{\pi_1||\pi_2\in P_{(\Pi,E_+)}[n]}x^{|\pi_2|}$$
 		By M\"obius inversion we get the Poisson-Charlier polynomials corresponding to the parameter $a=1$,
 		$$c_n(x;1)=\sum_{\pi_1||\pi_2\in \Pi.\pi[n]}\mu(\widehat{0},\pi_1||\pi_2)x^{\pi_2}.$$
 		
 		The general Poisson-Charlier polynomials $c_n(x;a)$ are the umbral inverses of
 		 the Sheffer family $T_n(ax+a)$,   
 		\begin{equation}
 		T_n(ax+a)=\sum_{\pi_1||\pi_2\in\Pi.\Pi[n]}a^{|\pi_1|+|\pi_2|}x^{|\pi_2|}.
 		\end{equation}
 		
 		The polynomials $c_n(x;a)$ have the following combinatorial interpretation in terms of the parameter $a$ and the M\"obius function of $P_{(\Pi,E_+)}[n]$ (see Fig. \ref{fig.charlier}).
 		\begin{equation}
 		c_n(x;a)=\frac{1}{a^n}\sum_{\pi_1||\pi_2\in \Pi\Pi[n]}\mu(\hat{0},\pi_1||\pi_2)a^{|\pi_1|}x^{|\pi_2|}.
 		\end{equation}
 		\end{ex}
 	\begin{ex}The hyperbolic monop. \normalfont
 		The pair $(E^{\mathrm{ev}},E^{\mathrm{odd}})$ is a $c$-monop. Its generating function is $(\cosh(x),\sinh(x))$ and inverse $(\frac{1}{\sqrt{1+x^2}},\ln(x+\sqrt{1+x^2}))$. In a forthcoming paper we will describe in detail the properties of the corresponding poset and associated Sheffer polynomials.
 	\end{ex}
 \begin{ex}\normalfont 
 	Consider the shuffle monop of lists and cyclic permutations $(\LL, C)$, $C'=\LL$. Its generating function  \begin{equation*}(\LL,C)(x)=(\frac{1}{1-x},\ln(\frac{1}{1-x}))\end{equation*}\noindent has as inverse \begin{equation}\label{eq:genlincy}(e^{-x}, 1-e^{-x})\end{equation} The elements of $P_{(\LL,C)}[n]$ are pairs of the form $(l,\sigma)\in \LL\cdot E(C)[n]$, $l$ a linear order and $\sigma$ a permutation. Since the  binomial family associated to $1-e^{-x}$ is the increasing factorial, Ex. \ref{ex.cycles}, the  Sheffer sequence associated to the Riordan pair in Eq. \ref{eq:genlincy}, by Eq. (\ref{eq:shefferbinomial}), is equal to $$e^{D}x(x+1)(x+2)\dots(x+n-1)=(x+1)(x+2)\dots (x+n)=\sum_{(l,\sigma)\in P_{(\LL,C)}[n]}x^{|\sigma|},$$
 	where $|\sigma|$ denotes the number of cycles in $\sigma$. Their umbral inverses codify the M\"obius function of $P_{(\LL,C)}[n]$,
 	\begin{eqnarray*}(1-D)(-1)^nT_n(-x)=(-1)^n(T_n(-x)+T'_n(-x))&=&(-1)^{n+1}\frac{T_{n+1}(-x)}{x}\\&=&\sum_{k=1}^nS(n+1,k)(-1)^{n+1-k}x^{k-1}\\&=&\sum_{k=0}^nS(n+1,k+1)(-1)^{n-k}x^{k}.\end{eqnarray*}
 	Hence:
 	$$|\LL.\gamma_k(C)[n]|_{\mu}=S(n+1,k+1)(-1)^{n-k}.$$
 \end{ex}
 	
 	\begin{ex}\normalfont The ballot monoid $E^r$ of Ex. \ref{ex.Boolean} together with the Dowling operad (subsection \ref{sec.dowoperad} )   form a monop $(E^r, \Dowl)$, that we call the {\em $r$-Dowling monop}. The monoid $E^r$ has also a structure of right $c$-$\Dowl$ module. 	\begin{eqnarray}
 		\nonumber \tau:E^r(\Dowl)&\rightarrow& E^r\\
 		\tau(\{f_B\}_{B\in\pi}, (\pi_1,\pi_2,\dots,\pi_r))&=&(\cup_{B\in\pi_1}B,\cup_{B\in\pi_2}B,\dots, \cup_{B\in\pi_r}B)
 		\end{eqnarray}
 		\noindent where $(\pi_1,\pi_2,\dots,\pi_r)$ is an $r$-composition of $\pi$, $(\pi_1,\pi_2,\dots,\pi_r)\in E^r[\pi].$
 		The reader may check that  $\nu$ and $\tau$ are compatible. For $r=1$ the pair $(E,\Dowl)$ will be called the {\em Dowling monop}. In the next subsection we will give details of the construction of the Dowling and the $r$-Dowling posets. Observe that  this example corresponds to the Riordan category in the context of $\mathscr{L}$-species with shuffle product and substitution, their underlying sets are totally ordered.
 	\end{ex}	
 	\subsection{The Dowling monop, Dowling lattices and the $r$-Dowling posets}\label{subsec:rdow} 
 	
 	The Dowling lattice $Q_n(G)$ is constructed using a monop $(E,\Dowl)$. It has as underlying set $(E.E(\Dowl))[\{v_1,v_2,\dots,v_n\}]$, its elements are pairs of the form $(V_1,a)$,  where $a=\{f_B\}_{B\in \pi}$ is an assembly of unital colorations on $V_2$, $V=V_1+V_2$. The partial order is defined as follows. 
 	\begin{defi}
 		\normalfont We will say that $(V_1,a_1)\leq_{Q_n(G)} (V_3,a_2)$ if	the assembly $a_1$ splits in two subassemblies $a_1=a_1^{(1)}+a_2^{(2)}$ with respective underlying partitions $\pi^{(1)}$ and $\pi^{(2)}$, such that 
 		\begin{enumerate}
 			\item $V_3=V_1+\bigcup_{B\in\pi^{(1)}}B$
 			\item $a^{(2)}\leq_G a_2$, where $\leq_G$ is the partial order defined by the Dowling operad $\Dowl$.
 		\end{enumerate}
 	\end{defi}
 	The order so defined is isomorphic to the classical Dowling lattice \cite{Dowling1}. We are going to generalize this construction to a poset $Q_{n,r}(G)$ depending on a second parameter $r$ and whose Withney numbers of the first and second kind coincide with those defined in \cite{MeRam}.
 	
 	The $r$-Dowling poset $Q_{m,r}[V]$ is constructed using the $r$-Dowling monop of above. Its subjacent set is $(E^rE(\Dowl))[V]$, whose elements are pairs of the form $((V_1,V_2,\dots V_r),a)$,  where $a=\{f_B\}_{B\in \pi}$ is an assembly of unital colorations on $V_{r+1}$, $V=V_1+V_2+\dots + V_r+V_{r+1}$. The partial order is defined as follows.
 	\begin{defi}
 		\normalfont We will say that $((V_1,V_2,\dots V_r),a_1)\leq_{Q_{m,r}}((W_1,W_2,\dots W_r),a_2)$ if	the assembly $a_1$ splits in two subassemblies $a_1=a_1^{(1)}+a_2^{(2)}$ with subjacent partitions $\pi^{(1)}$ and $\pi^{(2)}$ respectively, and there exists an $r$-coloration of $\pi^{(1)}$, $(\pi^{(1)}_1,\pi^{(1)}_2,\dots\pi^{(1)}_r)$ such that 
 		\begin{enumerate}
 			\item $(W_1,W_2,\dots W_r)=(V_1,V_2,\dots V_r)+(\bigcup_{B\in\pi^{(1)}_1} B,\bigcup_{B\in\pi^{(1)}_2} B,\dots,\bigcup_{B\in\pi^{(1)}_r} B)$
 			\item $a_1^{(2)}\leq_G a_2$, where $\leq_G$ is the partial order defined by the Dowling operad $\Dowl$.
 		\end{enumerate}
	
 	\end{defi}		
 \section{Commutative diagrams and fundamental theorem}\label{section.diagrams}Even we deal here only with  set monops, the concept could be extended to  species having as codomain other categories. For example, linear species, or linear dg-species, by changing the codomain category of sets by another appropriated category. With this in mind, in this section we present the theory of monops by using only commutative diagrams, and prove the Fundamental Theorem without references to the combinatorial objects and constructions inherent only to set monops. In this way the theorems presented here remain valid in other contexts beyond set theoretical and combinatorial constructions.     
 \subsection{Commutative diagrams for monoids, operads, and monops.}
 
 A monid is a species $M$ plus a product and $\nu:M\cdot M\rightarrow M$, and a morphism $\mathfrak{e}:1\rightarrow M$, such that the following diagrams commute
 \begin{equation}\label{monoididentity}
 \xymatrix{M.1\ar[dr]^{\cong}\ar[r]^{M.\mathfrak{e}}&M.M\ar[d]^{\nu} & \ar[dl]_{\cong}\ar[l]_{\mathfrak{e}.M}1.M\\ &M&}
 \end{equation}
 \begin{equation}\label{monoidassocia}
 \xymatrix{M.(M.M)\ar[d]^{\tilde{\alpha}}\ar[r] ^{M.\nu}& M.M\ar[rd]^{\nu} \\ (M.M).M\ar[r]^{\nu.M}&M.M\ar[r]^{\nu}&M}
 \end{equation}
 
 Similarly, as it has been said before, an operad $\OO$ is a species plus a product $\eta:\OO(\OO)\rightarrow \OO$, $\eta:\OO(\OO)\rightarrow\OO$ and identiy $e:X\rightarrow\OO$, such that the following diagram for the identiy and associativity commute,
\begin{equation}\label{eq.operadidentity}
\xymatrix{\mathscr{O}(X)\ar[dr]^{\cong}\ar[r]^{\mathscr{O}(e)}&\mathscr{O}(\mathscr{O})\ar[d]^{\eta} & \ar[dl]_{\cong}\ar[l]_{e(\mathscr{O})}X(\mathscr{O})\\ &\mathscr{O}&}
\end{equation}

 \begin{equation}\label{eq.operadasocia}
 \xymatrix{\mathscr{O}(\mathscr{O}(\mathscr{O}))\ar[d]^{\alpha}\ar[r] ^{\mathscr{O}(\eta)}& \mathscr{O}(\mathscr{O})\ar[rd]^{\eta} \\ (\mathscr{O}(\mathscr{O}))(\mathscr{O})\ar[r]^-{\eta(\mathscr{O})}&\mathscr{O}(\mathscr{O})\ar[r]^{\eta}&\OO}
 \end{equation}

 The identity and associativity axioms for $(M,\OO)$ as a monoid in the Riordan category say that the following diagrams commute
 \begin{equation}\label{eq.unit.monop}
 \xymatrixcolsep{4pc}\xymatrix{(M,\OO)\ast(1,X)\;\;\ar[r]^{(M\OO)\ast(\mathfrak{e},e)}\ar[dr]_{\cong}&\;\;(M,\OO)\ast (M,\OO)\ar[d]^{(\rho,\eta)}&\ar[l] _{(\mathfrak{e},e)\ast(M,\OO)}\ar[ld]^{\cong}(1,X)\ast(M,\OO)&\\  &(M,\OO)&}
 \end{equation}

 \begin{equation}\label{monopasocia1}
 \xymatrixcolsep{4pc}\xymatrix{((M,\OO)\ast(M,\OO))\ast (M,\OO)\;\;\ar[d]^{\alpha}\ar[r]^-{(\rho,\eta)\ast (M,\OO)}&\;\;(M,\OO)\ast (M,\OO)\ar[dr] ^-{(\rho,\eta)}& \\  (M,\OO)\ast((M,\OO)\ast(M,\OO))\;\;\ar[r]^-{(M,\OO)\ast(\rho,\eta)}&\;\;(M,\OO)\ast(M,\OO)\ar[r]^-{(\rho,\eta)}&(M,\OO).&}
 \end{equation}
 \noindent  
 The commutativity of the diagram (\ref{eq.unit.monop}) in the second component give us the operadic identity axiom for $\OO$(Eq. \ref{eq.operadidentity}). In the first component, gives us the commutativity of the diagram
 \begin{equation}\label{eq.identitymonop}
 \xymatrixcolsep{4pc}\xymatrix{M\cdot 1(\OO)\;\;\ar[r]^{M\cdot\mathfrak{e}(\OO)}\ar[dr]_{\cong}&\;\;M\cdot M(\OO)\ar[d]^{\rho}&\ar[l] _{\mathfrak{e}\cdot M(e)}\ar[ld]^{\cong}1\cdot M(X)&\\  &M&}
 \end{equation}

 We are going to concentrate in the associativity for the  product $\ast$. 
 We now check on how the associative morphism $\alpha=(\alpha_1,\alpha_2)$ works 
 \begin{eqnarray*}
 	\alpha:((M_1,\OO_1)\ast (M_2,\OO_2))\ast(M_3,\OO_3)&\rightarrow &(M_1,\OO_1)\ast ((M_2,\OO_2)\ast(M_3,\OO_3))\\
 	\alpha_1:(M_1.M_2(\OO_1)).M_3(\OO_2(\OO_1))&\rightarrow&M_1.((M_2.M_3(\OO_2))\circ(\OO_1))\\
 	\alpha_2:\OO_3(\OO_2(\OO_1))&\rightarrow&(\OO_3(\OO_2))\circ(\OO_1))
 \end{eqnarray*}
 
 The component $\alpha_2$ is the associativity morphism in the category of positive species with respect to the substitution. 
 
 The component $\alpha_1$ is obtained by  associativity with respect to the product of species,  and then apply right hand side distibutivity of the substitution with respect to the product: 
 $$\alpha_1:(M_1.M_2(\OO_1)).M_3(\OO_2(\OO_1))\rightarrow M_1.((M_2(\OO_1).M_3(\OO_2(\OO_1)))\rightarrow M_1.((M_2.M_3(\OO_2))\circ(\OO_1)).$$

 The product morphism  $(\rho,\eta)\ast (M,\OO)$ is equal to $(\rho.M(\eta),\OO(\eta))$ and $ (M,\OO)\ast (\rho,\eta)=(M.\rho(\OO),\eta(\OO))$, $M$ and $\OO$ standing for the respective identity morphisms. Hence, associativity in the second component is the associativity diagram for operads of Eq. \ref{eq.operadasocia}). Then, $(\OO,\eta,e)$ is an operad, and an equivalent definition of  a monop is as follows.

 \begin{defi}
 	\normalfont An admissible pair $(M,\OO)$ is called a {\em monop} if 
 	\begin{enumerate}\normalfont
 		\item $\OO$ has an operad structure $(\OO,\eta,e)$, $\eta:\OO(\OO)\rightarrow \OO$. 
 		\item The identity diagram in Eq. (\ref{eq.identitymonop}) commutes. 
 		\item For the product $\rho:M.M(\OO)\rightarrow M$,
 		the following diagram commutes (associativity for $\rho$) 
 		\begin{equation}\label{monopasocia2}
 		\xymatrix{(M.M(\OO)).M(\OO(\OO))\ar[d]^{\alpha_1}\ar[r]^-{\rho.M(\eta)}&M.M(\OO)\ar[dr] ^-{\rho}& &\\   M.((M.M(\OO))(\OO)) \ar[r]^-{M.\rho(\OO)}&M.M(\OO)\ar[r]^{\rho}&M.&}
 		\end{equation}
 	\end{enumerate}
 \end{defi}
 
 \noindent 
 The product $\rho: M.M(\OO)\rightarrow M$ induces a monoid structure $\nu:M.M\rightarrow M$ and a $\OO$-right  module structure $\tau:M(\OO)\rightarrow M$ over $M$,  defined by the composition of morphisms
 
 \begin{equation}
 \xymatrix{M.M(X)\ar[d]^{\cong}\ar[r]^{M.M(e)}&M.M(\OO)\ar[d]^{\rho}&1.M(\OO)\ar[d]^{\cong}\ar[r]^{\mathfrak{e}.M(\OO)}&M.M(\OO)\ar[d]^{\rho}\\ M.M\ar@{-->}[r]^{\nu}&M&M(\OO)\ar@{-->}[r]^{\tau}&M}
 \end{equation}
 The identity digram, Eq. (\ref{eq.identitymonop}), gives simultaneously the identity axiom for $M$ as a monoid and as right $\OO$-module. Associativity of $\nu$ and $\tau$ are deduced by specializing diagram (\ref{monopasocia2}). Making the restriction $\OO|_X$, $e:X\rightarrowtail\OO$ in the whole diagram, and using the natural identification $N(X)\cong N$, we obtain associativity for $\nu$. Restricting $M.M(\OO)$ to $1\cong 1\cdot 1(\OO)\rightarrowtail M.M(\OO)$ in the upper left corner of the digram we obtain associativity for $\tau$. Conversely, if $\nu$ and $\tau$  give to $M$ a structure of respectively monoid and right $\OO$-module, then $(M,\OO)$ is a monop provided that 
 \begin{equation}\label{eq:tauro}\rho:=\nu\circ M.\tau,\;
 \rho(m_1,(a,m_2))=\nu(m_1,\tau(a,m_2)).
 \end{equation} satisfies associativity (\ref{monopasocia2}). 
 
 Associativity for monops  gives also the following important additional information. Restricting $M$ to $1$ in the first factor of the upper left corner, $1.M(\OO)\rightarrowtail M.M(\OO)$, and again $\OO$ to $X$  in the second factor of the upper left corner, $M(X(\OO))\rightarrowtail M(\OO(\OO))$, and expressing $\rho$ as in Eq. (\ref{eq:tauro}) we obtain the following commutative digram
 
 \begin{equation}\label{monopcompa}
 \xymatrix{M(\OO)\cdot M(\OO)\ar[d]^{\beta}\ar[r]^-{\tau.\tau}&M.M\ar[dr] ^-{\nu}& &\\   (M.M)(\OO) \ar[r]^-{\nu(\OO)}&M(\OO)\ar[r]^{\tau}&M.&}.
 \end{equation}
 
 \noindent It gives a kind of compatibility between the module and monoid structure of $M$, $$\nu\circ \tau.\tau=\tau\circ\nu(\OO)\circ\beta.$$
 That means that the action $\tau$ of $\OO$ on $M$ commutes with the product $\nu$ on $M$. We will say then that $\nu$ and $\tau$ are {\emph compatible}.
\begin{proof}We have already proved the converse part. For the direct part, we have only to prove the commutativity of (\ref{monopasocia2}). We expand  it by using the definition of $\rho$,  $\rho=\nu\circ (M.\tau)$. 
	\begin{equation*}\label{monopasocia3}
	\xymatrixcolsep{3pc}\xymatrix{(M.M(\OO)).M(\OO(\OO))\ar[d]^{\alpha_1}\ar[r]^-{(M.\tau).M(\eta)}&(M.M).M(\OO)\ar[r] ^-{\nu.M(\OO)}&M.M(\OO) \ar[r]^-{M.\tau}&M.M\ar[dr]^{\nu}& &\\M.((M.M(\OO))(\OO)) \ar[r]^-{ M.((M.\tau)(\OO))}&M.((M.M)(\OO))\ar[r]^-{M.\nu(\OO)}&M.M(\OO)\ar[r]^-{M.\tau}&M.M\ar[r]^{\nu}&M.&}
	\end{equation*} 
	
	In order to prove its commutativity, consider the following enhanced diagram
	\begin{equation*}
	\xymatrixcolsep{3pc}\xymatrix{(M.M(\OO)).M(\OO(\OO))\ar[dd]^{\alpha^{(1)}_1}\ar[r]^-{(M.\tau).M(\eta)}&(M.M).M(\OO)\ar @{} [dd] |{\mathrm{(I)}}\ar[rd]_{(M.M).\tau}\ar[r] ^-{\nu.M(\OO)}&M.M(\OO) \ar @{} [d] |{\mathrm{(II)}}\ar[r]^-{M.\tau}&M.M\ar[dddr]^{\nu}& &\\& & (M.M).M\ar @{} [ddrr] |{\mathrm{(III)}}\ar[d]^{\psi} \ar[ur]_{\nu.M}& & \\M.(M(\OO).M(\OO(\OO)))\ar[d]^{\alpha_1^{(2)}}\ar[r]^{\lambda}&\;M.(M(\OO).M(\OO)) \ar @{} [d] |{(\mathrm{IV})}\ar[r]^-{M.(\tau.\tau)}&M(M.M)\ar[dr]^{M.\nu}&\\
		M.((M.M(\OO))(\OO))\;\ar[r]^-{ M.((M.\tau)(\OO))}&M.((M.M)(\OO))\ar[r]^-{M.\nu(\OO)}&M.M(\OO)\ar[r]^-{M.\tau}&M.M\ar[r]^{\nu}&M,&}
	\end{equation*} 
	\noindent where $\lambda=M.(M(\OO).M(\eta)).$
	Pentagon $\mathrm{(III)}$ is the associative diagram for the monoid $M$, and hence commutes. Since $\alpha_1=\alpha_1^{(2)}\circ\alpha_1^{(1)}$, we will be done after proving commutativity of pentagon $\mathrm{(I)}$, triangle $\mathrm{(II)}$ and diagram $\mathrm{(IV)}$. 
	To prove commutativity of $\mathrm{(I)}$ we have that 
	\begin{eqnarray*}
		M.(\tau.\tau)\circ\lambda\circ\alpha_1^{(1)}&=&M.(\tau.\tau)\circ M.(M(\OO).M(\eta))\circ\alpha_1^{(1)}\\&=&M.(\tau.(\tau\circ M(\eta)))\circ\alpha_1^{(1)}\\&=&\psi\circ (M.\tau).(\tau\circ M(\eta))\\&=&\psi\circ (M.M).\tau\circ (M.\tau).M(\eta).
	\end{eqnarray*}
	In a similar way we prove commutativity of $\mathrm{(II)}$. To prove that of $\mathrm{(IV)}$ add to it the $M\beta$ arrow to obtain 
	\begin{equation*}
	\xymatrixcolsep{3pc}\xymatrix{M.(M(\OO).M(\OO(\OO)))\ar[d]^{\alpha_1^{(2)}}\ar[r]^{\lambda}&\;M.(M(\OO).M(\OO)) \ar @{} [dr] |{(\mathrm{V})}\ar[d]^{M.\beta}\ar[r]^-{M.(\tau.\tau)}&M(M.M)\ar[dr]^{M.\nu}&\\
		M.((M.M(\OO))(\OO))\;\ar[r]^-{ M.((M.\tau)(\OO))}&M.((M.M)(\OO))\ar[r]^-{M.\nu(\OO)}&M.M(\OO)\ar[r]^-{M.\tau}&M.M.&}
	\end{equation*}
	Observe that $\mathrm{(V)}$ is the compatibility diagram Eq. (\ref{monopcompa}) multiplied in all its entries by $M$, and hence commutes. Pseudo-associativity of $\eta$ and $\tau$ ($M$ as a right $\OO$-module) says that:
	\begin{equation}\label{eq:pseudo}\tau\circ M(\eta)=\tau\circ\tau(\OO)\circ \phi.\end{equation}
	Focusing in the actions of morphisms on $M(\OO(\OO))$, since the restriction of $\alpha_1^{(2)}$ to it is equal to $\phi$, from Eq. (\ref{eq:pseudo}) we get $$M.(\tau.\tau)\circ\lambda=M.(\tau.\tau)\circ M.\beta^{-1}\circ M.((M.\tau)(\OO)).$$
	By the commutativity of $\mathrm{(V)}$ we obtain 
	\begin{eqnarray*}
		M.\nu\circ M.(\tau.\tau)\circ\lambda&=&M.\nu\circ M.(\tau.\tau)\circ M\beta^{-1}\circ M.((M.M(\OO))(\OO))\circ \alpha_1^{(2)}\\&=&M.\tau\circ M.\nu(\OO)\circ M.((M.M(\OO))(\OO))\circ \alpha_1^{(2)}.
	\end{eqnarray*}   
\end{proof}
 \begin{theo}\label{teo.monopderivative} Let $(\OO,\eta, e)$ be an operad. Then $(\OO', \OO,(\rho,\eta), (\mathfrak{e},e))$, with $\rho=\eta'$ and $\mathfrak{e}=e'$ is a monop.\end{theo}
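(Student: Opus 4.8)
The plan is to recognize $(\OO',\OO)$ as the image of the operad $\OO$ under a \emph{differentiation} operation and to deduce the monop axioms by differentiating the operad axioms. The computational backbone is the chain rule for the derivative of a substitution, $(\OO(\OO))'\cong\OO'(\OO)\cdot\OO'$, which together with commutativity of the product identifies the first component of the Riordan square
\[
(\OO',\OO)\ast(\OO',\OO)=\bigl(\OO'\cdot\OO'(\OO),\ \OO(\OO)\bigr)
\]
with $\bigl((\OO(\OO))',\OO(\OO)\bigr)$. Under this identification the proposed product $(\rho,\eta)=(\eta',\eta)$ is literally the pair formed by the derivative of $\eta$ and $\eta$ itself, and the proposed unit $(\mathfrak{e},e)=(e',e)$ is admissible because $X'\cong 1$, so that $e'\colon 1\to\OO'$ selects the operadic identity $e_*\in\OO'[\emptyset]=\OO[\{*\}]$. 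Iterating, the chain rule and right distributivity of substitution over the product give $(\OO',\OO)^{\ast n}\cong\bigl((\OO^{(n)})',\OO^{(n)}\bigr)$, where $\OO^{(n)}$ is the $n$-fold substitution power; this is the structural reason the construction should succeed.

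Granting this dictionary, proving the theorem amounts to checking the unit diagram (\ref{eq.unit.monop}) and the associativity diagram (\ref{monopasocia1}), and I would obtain each as the differentiated image of its operadic counterpart. Differentiating the operad unit diagram (\ref{eq.operadidentity}) and using $X'\cong 1$ yields (\ref{eq.unit.monop}), equivalently the identity diagram (\ref{eq.identitymonop}) for $\rho$. For associativity I would differentiate the operad pentagon (\ref{eq.operadasocia}): the chain rule applied twice gives $(\OO(\OO(\OO)))'\cong\OO'(\OO(\OO))\cdot\OO'(\OO)\cdot\OO'$, which after reordering is the source corner $(\OO'\cdot\OO'(\OO))\cdot\OO'(\OO(\OO))$ of (\ref{monopasocia1}); the commutativity of (\ref{monopasocia1}) then follows from that of (\ref{eq.operadasocia}), provided one checks that the chain-rule isomorphisms intertwine the operadic associator $\alpha$ of $\circ$ with the Riordan associator $(\alpha_1,\alpha_2)$ of $\ast$.

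A more hands-on route, which I would use to make the verification concrete, is to invoke the Fundamental Theorem of monops (Theorem \ref{teo.fundamentalmonops}): it suffices to endow $M=\OO'$ with a monoid product $\nu$ and a compatible right $\OO$-module action $\tau$ such that $\rho=\nu\circ(M.\tau)$. I would take $\nu\colon\OO'.\OO'\to\OO'$ to be grafting of pointed operations, $\nu(w_1,w_2)$ substituting $w_2$ into the distinguished input $*$ of $w_1$ and inheriting the distinguished slot of the result from $w_2$; its associativity and the unit $e'$ are immediate from the operad axioms for $\eta$. I would take $\tau\colon\OO'(\OO)\to\OO'$ to adjoin the operadic identity $e_*$ at $*$ and then compose by $\eta$; the two module axioms are again the associativity and identity of $\eta$. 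It then remains to verify the compatibility of $\nu$ and $\tau$ (diagram (\ref{monopcompa})) and the factorization $\eta'=\nu\circ(\OO'.\tau)$, whereupon Theorem \ref{teo.fundamentalmonops} delivers the monop.

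I expect the main obstacle to be the coherence bookkeeping for the single distinguished point introduced by the derivative. In the diagram route this is the verification that the Leibniz/chain-rule isomorphisms are compatible with the associators, i.e. that the way the extra point is distributed among the three tensor factors of $(\OO(\OO(\OO)))'$ is exactly the distribution prescribed by the component $\alpha_1$ of the Riordan associator. In the Fundamental-Theorem route the same difficulty is concentrated in the compatibility condition (\ref{monopcompa}): it expresses precisely that grafting at $*$ and composing away from $*$ commute, and it is the one identity that does not reduce to a single use of operad associativity, so it must be checked directly.
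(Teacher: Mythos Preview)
Your diagram route is precisely the paper's proof: it differentiates the operad associativity pentagon (\ref{eq.operadasocia}) using the chain rule and observes that the result is the monop associativity diagram (\ref{monopasocia2}) with $M=\OO'$, and handles the unit diagram the same way. The paper is terser than you are---it simply asserts that the differentiated diagram coincides with (\ref{monopasocia2})---and in particular does not spell out the coherence check you flag, namely that the chain-rule isomorphisms intertwine the substitution associator $\alpha$ with the Riordan associator $\alpha_1$; you are right that this is the only place any real work hides.

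Your second route via Theorem \ref{teo.fundamentalmonops} is a legitimate alternative and is not in the paper, but watch the orientation of $\nu$. Since $\nu$ is \emph{defined} as $\rho\circ(\OO'.\OO'(e))$ with $\rho=\eta'$, unwinding the chain-rule identification $\OO'\cdot\OO'(\OO)\cong(\OO(\OO))'$ shows that the first factor $m_1\in\OO'[V_1]$ corresponds to the structure on the block $B_*=V_1+\{*\}$ containing the ghost vertex, while $m_2$ becomes the external structure. Hence $\nu(m_1,m_2)$ grafts $m_1$ into the distinguished slot of $m_2$ (the new $*$ coming from $m_1$), the reverse of what you wrote. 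With that fix your description of $\tau$ is correct, the factorization $\eta'=\nu\circ(\OO'.\tau)$ is a single instance of operad associativity, and the compatibility diagram (\ref{monopcompa}) unwinds to the statement that grafting into the $*$-block and composing on the remaining blocks can be performed in either order---again operad associativity. So the ``obstacle'' you anticipated is genuine bookkeeping but no more than that.
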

 \begin{proof}
 	Computing derivatives in the associative diagram of $\eta$, Eq. (\ref{eq.operadasocia}), and by using the chain rule, we obtain that the following diagram commutes
 	
 	\begin{equation}\label{monopderasocia}
 	\xymatrix{(\OO'.\OO'(\OO)).\OO'(\OO(\OO))\ar[d]^{\alpha_1}\ar[r]^-{\rho.\OO'(\eta)}&\OO'.\OO'(\OO)\ar[dr] ^-{\rho}& &\\   \OO'.((\OO'.\OO'(\OO))(\OO)) \ar[r]^-{\OO'.\rho(\OO)}&\OO'.\OO'(\OO)\ar[r]^{\rho}&\OO'.&}
 	\end{equation}
 	\noindent Which is the same diagram of Eq. (\ref{monopasocia2}) after making $M=\OO'$. In a similar way we get the commutativity for identity diagram for a monop.
 	\end{proof}
 	
 \section{Algebraic monops}	A linear species is a covariant functor from the category $\BB$ to the category $\vect$ of $\KK$-vector spaces and linear maps.  We use the same notation $\Sp$ and $\Sp_+$ for the monoidal categories of liner species with the operation of product, and linear positive species with the operation of substitution, respectively.
 
 As in the case of set monops, an algebraic monop $(M,\OO)$ is defined to be a monoid in the category $\Sp\rtimes\Sp_+$. Theorem \ref{teo.fundamentalmonops}, all the commuting diagrams in Section \ref{section.diagrams}, and the construction of Subsection \ref{subsection.derivative}  are obviously valid in the algebraic context. 
% \begin{ex}
% 	$(\LL,\Lie)$, $\mathbb{L}=\Lie'$
% \end{ex}

 %\subsection{The free monop}
 %The free monop generated by the linear species $(F,G)$. $(\LL(F(\mathscr{F}_G)),\mathscr{F}_G)$
 \subsection{Monop-algebras} 
 Let $R$ be a linear species, and $\VV$ a vector space. Denote by $(R[n]\otimes \mathbb{V}^{\otimes n})_{\Sy_n}$ the space of coinvariants of $R[n]\otimes \mathbb{V}^{\otimes n}$ under the natural action of the symmetric group $\Sy_n$. Recall that  the analytic functor  (\cite{Joy2}) $$\widetilde{R}:\vect\rightarrow \vect,$$ associated to $R$, is defined by
 \begin{equation}
 \widetilde{R}(\mathbb{V})=\bigoplus_{n=0}^{\infty}(R[n]\otimes \mathbb{V}^{\otimes n})_{\Sy_n}.
 \end{equation}
 
 The tilde functor (Schur functor) sends the product of species to tensor product of analytic functors, and substitution of species into functorial composition,
 \begin{eqnarray}\label{tildeprop}
 \widetilde{R.S}(\VV)&=&\widetilde{R}(\VV)\otimes\widetilde{S}(\VV)\\\label{tilde.monad}
 \widetilde{R(S)}(\VV)&=&\widetilde{R}(\widetilde{S}(\VV)).
 \end{eqnarray}
 From Eq. (\ref{tildeprop}) we get that for a linear monoid $M$ and $\mathbb{V}$ a vector space, $\widetilde{M}(\mathbb{V})$ is an associative algebra. From Eq. (\ref{tilde.monad}) for an operad $\OO$ the corresponding analytic functor $\widetilde{\OO}$ is a monad. Recall that for  $\OO$  an algebraic operad, a vector space  $\A$ is said to be an {\em $\OO$-algebra} if there is an action $\tiltau:\widetilde{\OO}(\A)\rightarrow \A$, such that the following digram commutes
 
 \begin{defi}\normalfont
 	A pair of vector spaces $(\A_1,\A_2)$ is said to be an algebra over the monop $(M,\OO)$ if there is an action:
 	\begin{equation}
 	(\A_1,\A_2)\ast (M,\OO):=(\A_1\otimes \widetilde{M}(\A_2),\widetilde{\OO}(\A_2))\stackrel{(\varrho,\vartheta)}{\longrightarrow} (\A_1,\A_2),
 	\end{equation} 
 	\noindent which is pseudo associative. 
 	
 	 \begin{equation}\label{pseudoassocia}
 	\xymatrixcolsep{4pc}\xymatrix{((\A_1,\A_2)\ast(M,\OO))\ast (M,\OO)\;\;\ar[d]^{\alpha}\ar[r]^-{(\varrho,\vartheta)\ast (M,\OO)}&\;\;(\A_1,\A_2)\ast (M,\OO)\ar[dr] ^-{(\varrho,\vartheta)}& \\  (\A_1,\A_2)\ast((M,\OO)\ast(M,\OO))\;\;\ar[r]^-{(\A_1,\A_2)\ast(\rho,\eta)}&\;\;(\A_1,\A_2)\ast(M,\OO)\ar[r]^-{(\varrho,\vartheta)}&(\A_1,\A_2).&}
 	\end{equation}
 	
 	Pseudo associativity in the second component means that $\A_2$ is a $\OO$-algebra. In the first component means that the following diagram commutes
 	 \begin{equation}\label{pseudoassociafirstcomponent}
 	\xymatrixcolsep{4pc}\xymatrix{\A_1\otimes\widetilde{M}(\A_2)\otimes\widetilde{M}(\widetilde{\OO}(\A_2))\;\;\ar[d]^{\alpha_1}\ar[r]^-{\varrho\otimes\widetilde{M}(\vartheta)}&\;\;\A_1\otimes\widetilde{M}(\A_2)\ar[dr] ^-{\varrho}& \\  \A_1\otimes\widetilde{M\cdot M(\OO)}(\A_2)\;\;\ar[r]^-{\A_1\otimes\rho(\A_2)}&\;\;\A_1\otimes\widetilde{M}(\A_2)\ar[r]^-{\varrho)}&\A_1.&}
 	\end{equation}
 	
 \end{defi}
 As a consequence of the definition $\A_1$ is a right module over the associative algebra $\widetilde{M}(\A_2)$. The free $(M,\OO)$-algebra is crealy equal to $$(V_1\otimes \widetilde{M}(V_2),\widetilde{\OO}(V_2)).$$ The action of $(M,\OO)$ over the free algebra is naturally the action by imposing pseudo associativity:
 
 \begin{equation*}(V_1\otimes \widetilde{M}(V_2),\widetilde{\OO}(V_2))\ast(M,\OO)=(V_1\otimes \widetilde{M}(V_2)\otimes \widetilde{M}(\widetilde{\OO}(V_2)),\widetilde{\OO}(\widetilde{\OO}(V_2)))
 \end{equation*}
 \begin{eqnarray*}
 	(V_1\otimes \widetilde{M}(V_2)\otimes \widetilde{M}(\widetilde{\OO}(V_2)),\widetilde{\OO}(\widetilde{\OO}(V_2)))\rightarrow (V_1\otimes \widetilde{M}(V_2),\widetilde{\OO}(V_2)).
 \end{eqnarray*}
 	%\bibliographystyle{alpha}
 	%\bibliography{monops1bib}

\begin{thebibliography}{SGWC81}
 
 \bibitem[AM10]{Marcelol}
 M.~Aguiar and S.~Mahajan.
 \newblock {\em Monoidal functors, species and {H}opf algebras}, volume~29 of
 {\em CRM Monograph Series}.
 \newblock American Mathematical Society, Providence, RI, 2010.
 
 \bibitem[BGS82]{Bjorner}
 A.~Bj{\"o}rner, A.~M. Garsia, and R.~P. Stanley.
 \newblock An introduction to {C}ohen-{M}acaulay partially ordered sets.
 \newblock In {\em Ordered sets ({B}anff, {A}lta., 1981)}, volume~83 of {\em
 NATO Adv. Study Inst. Ser. C: Math. Phys. Sci.}, pages 583--615. Reidel,
 Dordrecht-Boston, Mass., 1982.
 
 \bibitem[BLL98]{BBL}
 F.~Bergeron, L.~Leroux, and G.~Labelle.
 \newblock {\em Combinatorial species and tree-like structures}, volume~67.
 \newblock Encyclopedia of mathematics and applications, 1998.
 
 \bibitem[Dow73]{Dowling1}
 T.~Dowling.
 \newblock A class of geometric lattices based on finite groups.
 \newblock {\em J. Combin. Theory, Ser. B}, 14:61--86, 1873.
 
 \bibitem[Fre09]{Fressemodul}
 B.~Fresse.
 \newblock {\em Modules over operads and functors}, volume 1967 of {\em Lecture
 Notes In Mathematics}.
 \newblock Springer-Verlag, 2009.
 
 \bibitem[Ful16]{Fuller}
 B.~Fuller.
 \newblock Semidirect product of monoidal categories.
 \newblock {\em arxive:1510.08717v3 [math.CT]}, 2016.
 
 \bibitem[GK94]{G-K}
 V.~Ginzburg and M.~Kapranov.
 \newblock Koszul duality for operads.
 \newblock {\em Duke Math. J.}, 76:203--272, 1994.
 
 \bibitem[Joy81]{Joy1}
 A.~Joyal.
 \newblock Une th\'eorie combinatoire des series formelles.
 \newblock {\em Adv. Math.}, 42:1--82, 1981.
 
 \bibitem[Joy86]{Joy2}
 A.~Joyal.
 \newblock Foncteurs analytiques et esp\`eces de structures.
 \newblock In {\em Combinatoire \'enum\'erative ({M}ontreal, {Q}uebec, 1985)},
 volume 1234 of {\em Lecture Notes in Math.}, pages 126--159. Springer,
 Berlin, 1986.
 
 \bibitem[JRS81]{JoniRS}
 S.~A. Joni, G.-C. Rota, and B.~Sagan.
 \newblock From sets to functions: three elementary examples.
 \newblock {\em Discrete Mathematics}, 37:192--201, 1981.
 
 \bibitem[Men10]{Mend-Kosz}
 Miguel~A. Mendez.
 \newblock Koszul duality for monoids and the operad of enriched trees.
 \newblock {\em Adv. Appl. Math.}, 44:261--297, 2010.
 
 \bibitem[Men15]{Mendlib}
 M.~Mendez.
 \newblock {\em Set Operads in Combinatorics and Computer Science}.
 \newblock Springer Briefs in Mathematics. Springer, 2015.
 
 \bibitem[MR70]{MullinRota}
 R.~Mullin and G.-C. Rota.
 \newblock {\em On the foundations of combinatorial theory III: Theory of
 binomial enumeration}.
 \newblock Academy Press, N. Y., 1970.
 \newblock in "Graph theory and its applications.
 
 \bibitem[MR17]{MeRam}
 M.~Mendez and J.~Ramirez.
 \newblock A new approach to the r-whitney numbers by using combinatorial
 differential calculus.
 \newblock {\em arXiv:1702.06519v1 [math.CO]}, 2017.
 
 \bibitem[MY91]{Mend-Yang}
 M.~Mendez and J.~Yang.
 \newblock M{\"o}bius species.
 \newblock {\em Adv. Math.}, 85:83--128, 1991.
 
 \bibitem[PT11]{Pecatti}
 G.~Pecatti and M.~S. Taqqu.
 \newblock {\em {W}iener Chaos: Moments, Cummulants and Diagrams}.
 \newblock Springer Italia, 2011.
 \newblock Bocconi \& Springer Series.
 
 \bibitem[Rei78]{Reiner}
 D.~L. Reiner.
 \newblock The combinatorics of polynomial sequences.
 \newblock {\em Studies in Appl. Math.}, 58:95--117, 1978.
 
 \bibitem[RKO73]{Rotako}
 G.-C. Rota, D.~Kahaner, and A.~Odlyzco.
 \newblock On the foundations of combinatorial theory viii: finite operator
 calculus.
 \newblock {\em J. Math. Anal. Appl.}, 42:684--760, 1973.
 
 \bibitem[Rom84]{Roman}
 S.~Roman.
 \newblock {\em The Umbral Calculus}.
 \newblock Academy Press, Inic., 1984.
 \newblock Pure and Applied Mathematics.
 
 \bibitem[Rot64]{Rotamob}
 G.-C. Rota.
 \newblock On the foundations of combinatorial theory i: theory of {M}{\"o}bius
 functions.
 \newblock {\em Z. Wahrscheinlichkeitstheorie und Verw. Gebiete}, 2:340--368,
 1964.
 
 \bibitem[RR78]{Roman-Rota}
 S.~Roman and G.-C. Rota.
 \newblock The umbral calculus.
 \newblock {\em Adv. Math.}, 27:95--188, 1978.
 
 \bibitem[RTW97]{Rota-Wallstrom}
 G.-C. Rota and T.~C. Timothy~Wallstrom.
 \newblock Stochastic integrals: a combinatorial approach.
 \newblock {\em The Annals of Probability}, 25:1257--1283,, 1997.
 
 \bibitem[Sag83]{Sagan}
 B.~Sagan.
 \newblock A note on abel polynomials and rooted labeled forests.
 \newblock {\em Discrete Mathematics}, 44:293--298, 1983.
 
 \bibitem[SGWC81]{Shapiro}
 L.~W. Shapiro, S.~Getu, W-J Woan, and Woodson~L. C.
 \newblock The {R}iordan group.
 \newblock {\em Discrete Applied Mathematics}, 34:229--234, 1981.
 
 \bibitem[Val07]{Valletteposet}
 B.~Vallette.
 \newblock Homology of generalized partition posets.
 \newblock {\em J. Pure Appl. Algebra}, 208:699--725, 2007.
 
 \bibitem[Wac07]{Wachs}
 M.~L. Wachs.
 \newblock Poset topology: tools and applications.
 \newblock In {\em Geometric combinatorics}, volume~13 of {\em IAS/Park City
 Math. Ser.}, pages 497--615. Amer. Math. Soc., Providence, RI, 2007.
 
 \end{thebibliography}

\end{document}